\newtheorem{thm}{Theorem}[section]
\newtheorem{remark}[thm]{Remark}
\newtheorem{lem}[thm]{Lemma}
\newtheorem{lemma}[thm]{Lemma}
\newtheorem{corollary}[thm]{Corollary}
\newtheorem{dfn}[thm]{Definition}
\newcommand{\SL}{\mathrm{SL}}
\newcommand{\PSL}{\mathrm{PSL}}
\newcommand{\SU}{\mathrm{SU}}
\newcommand{\PSU}{\mathrm{PSU}}
\newcommand{\Sp}{\mathrm{Sp}}
\newcommand{\PSp}{\mathrm{PSp}}
\newcommand{\SO}{\mathrm{SO}}
\newcommand{\PSO}{\mathrm{PSO}}
\newcommand{\POm}{\mathrm{P\Omega}}
\newcommand{\Om}{\mathrm{\Omega}}
\newcommand{\GO}{\mathrm{GO}}
\newcommand{\Gbar}{\overline{G}}
\newcommand{\Tbar}{\overline{T}}
\newcommand{\GF}{G}
\newcommand{\kodd}{k_{odd}}
\newcommand{\keven}{k_{even}}
\newcommand{\godd}{g_{odd,r}(n)}
\newcommand{\geven}{g_{even,r}(n)}
\newcommand{\frfn}{f_{r}}
\newcommand{\fr}{f_{r}(n)}
\newcommand{\goddfn}{g_{odd,r}}
\newcommand{\gevenfn}{g_{even,r}}
\newcommand{\B}{C_0}
\newcommand{\hxr}{h_{X,r}}
\renewcommand{\l}{n}
\def\imod#1{\allowbreak\mkern7mu({\operator@font mod}\,\,#1)}
\title[Proportions of $r$-regular elements in finite classical groups]{Proportions of $r$-regular elements in finite classical groups}
\author{L\'{a}szl\'{o} Babai}
\address{Department of Computer Science,
University of Chicago,
1100 East 58th Street,
Chicago, IL
60637, USA}
\email{laci@cs.uchicago.edu}
\author{Simon Guest}
\address{Centre for Mathematics of Symmetry and Computation (M019), 
The University of Western Australia, 35 Stirling Hwy,
Crawley, WA 6009, Australia\footnote{Current address for S. Guest: School of Mathematics, University of Southampton, Southampton, SO17 1BJ, UK}}
\email{simon\_guest@baylor.edu}
\author{Cheryl E. Praeger}
\address{Centre for Mathematics of Symmetry and Computation (M019), 
The University of Western Australia, 35 Stirling Hwy,
Crawley, WA 6009, Australia}
\address{\textit{Also affiliated with:}}
\address{King Abdulaziz University, Jeddah, Saudi Arabia
}
\email{cheryl.praeger@uwa.edu.au}
\author{Robert A. Wilson}
\address{School of Mathematical Sciences,
Queen Mary, University of London,
Mile End Road,
London, E1 4NS, United Kingdom}
\email{r.a.wilson@qmul.ac.uk}
\begin{document}

\maketitle

\begin{abstract}
For a prime $r$, we obtain lower bounds on the proportion of $r$-regular elements in classical groups 
and show that these lower bounds are the best possible lower bounds that do not depend on the order 
of the defining field. Along the way, we also provide new upper bounds and answer some open 
questions of the first author, P\'{a}lfy and Saxl.
\end{abstract}
\section{Introduction}

A number of results have appeared recently giving lower bounds for the proportion of $r$-regular 
elements, for a prime $r$, in a group of Lie type $G$ in characteristic $p \ne r$. We denote this 
proportion by $p_r(G)$. Of particular interest is the dependence on the dimension, in the case of 
classical groups in dimension $d$. In \cite{BPS}, the first author, P\'{a}lfy and Saxl show that 
the proportion of $r$-regular elements in any classical group is at least $1/2d$. To complement 
their lower bounds, they also showed that for all prime powers $q \equiv -1 \imod{4}$ and all $d 
\ge 2$ such that $(d,q-1) \le 2$, the proportion of $2$-regular elements in $\PSL_d(q)$ is
$p_2(\PSL_d(q)) \le 4q^{-1} + 4 (\pi d)^{-1/2}$ (see \cite[Theorem 6.1]{BPS}).  In a remark 
after Corollary 22 in  \cite{PW}, Parker and the fourth author show that, in general, a lower 
bound of $1/d$ for $p_2(\PSL_d(q))$ would be best possible. On the other hand, in \cite{GP}, 
the second and third authors improved these $O(d^{-1})$ lower bounds to $O(d^{-3/4})$ in the 
case $r=2$ for symplectic and orthogonal groups. We generalize and  improve these results here, 
obtaining lower bounds and upper bounds in terms of the dimension. 

\begin{thm} \label{t:simplelb}
{\rm (a)} If $G = \PSL_d(q)$ or $\PSU_d(q)$ then 
\[ p_r(G) \ge \frac{1}{d}.\]
{\rm (b)} Let $G = \Sp_{d}(q)$, $\SO^{\pm}_d(q)$, $\SO^{\circ}_d(q)$, $\Om^{\pm}_d(q)$, 
$\Om^{\circ}_d(q)$ or one of the corresponding projective groups.
 There exist explicit constants $C_1$, $C_2$ such that
 \[ p_r(G) \ge \begin{cases}
  C_1 d^{-3/4} & \mbox{ if $r =2$;}\\
  C_2 d^{-1/2} & \mbox{ if $r$ is odd.}
 \end{cases}\]
\end{thm}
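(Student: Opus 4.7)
The plan is to count $r$-regular elements in each classical family by identifying a sufficiently large union of conjugacy classes on which $r$-regularity can be read off from the rational canonical form. The starting observation is the Jordan decomposition: since $r$ differs from the defining characteristic $p$, the unipotent component of any element contributes $p$-power order and is therefore always $r$-regular, so $g$ is $r$-regular precisely when its semisimple part has order coprime to $r$. In classical groups the semisimple part is encoded in the polynomial-partition data parametrizing conjugacy classes (following Wall in the symplectic and orthogonal cases), so $r$-regularity translates into an arithmetic condition on the degrees of the irreducible factors of the characteristic polynomial of $g$ together with the multiplicative order of $q$ modulo $r$.

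For part (a), I would first reduce from $\PSL_d(q)$ and $\PSU_d(q)$ to $\SL_d(q)$ and $\SU_d(q)$, using that $p_r$ does not decrease under surjective homomorphisms. The main construction exhibits a large family of $r$-regular Singer-type elements whose semisimple part acts irreducibly on an invariant subspace of dimension $m$ chosen to force $r \nmid q^m - 1$ (respectively $r \nmid q^m - (-1)^m$ in the unitary case). A case split based on whether the multiplicative order of $q$ modulo $r$ divides $d$ shows that $m$ can always be chosen with $|d-m|$ small, and a direct count of the corresponding classes via the associated maximal torus and its Weyl-group normalizer yields at least $|G|/d$ such elements. In the degenerate $r = 2$, $q$ odd case, the passage from $\SL$ to $\PSL$ must be exploited: an element whose image in $\PSL$ has odd order need not itself have odd order, and counting such preimages supplies the extra flexibility needed.

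For part (b), conjugacy classes in $\Sp_d(q)$ and the orthogonal groups are parametrized by maps $f \mapsto \lambda_f$ from $*$-symmetric monic polynomials to partitions, with a signed refinement in the orthogonal case. When $r$ is odd, the Singer-type single-block strategy adapts to this setting, yielding an $r$-regular family of proportion at least $C_2 d^{-1/2}$; the weaker exponent, compared to part (a), reflects that the $*$-symmetry effectively doubles block sizes and thins the set of admissible Singer-type blocks by a factor of order $\sqrt d$. The principal obstacle is the $r = 2$ case, where every $*$-symmetric block contributes an even torus factor $q^k \pm 1$ (for $q$ odd), so no single-block construction can by itself produce a $2$-regular element. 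Instead one must sum over a combinatorially constrained family of Wall-type invariants and bound the contribution via generating-function asymptotics for partitions under parity and distinctness constraints, in the spirit of \cite{GP}; the exponent $3/4$ then arises from the standard Hardy--Ramanujan polynomial factor $n^{-3/4}$ for partitions into distinct parts. The most delicate step will be extracting the explicit constants $C_1$ and $C_2$, uniform in $q$, and controlling small-$d$ exceptions where the asymptotic estimates are weakest.
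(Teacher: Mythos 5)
Your overall strategy---read off $r$-regularity from the semisimple part and count favourable classes---is sound, and for $\PSL_d(q)$ with the multiplicative order $m$ of $q$ modulo $r$ at least $2$ your Singer-type construction does essentially what the paper does. But there are two genuine gaps. First, in part (a) the hard case is $m=1$, i.e.\ $r \mid q-1$: then $r \mid q^k-1$ for \emph{every} $k$, so no irreducible block of any dimension can be ``chosen to force $r\nmid q^m-1$'', and the Coxeter torus of $\SL_d(q)$ has order $(q^d-1)/(q-1)$ with $r$-part $d_r$, so the single-block family only contributes $\frac{1}{d}\cdot\frac{1}{d_r}$, far short of $1/d$ when $r\mid d$. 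Your reduction to $\SL_d(q)$ via ``$p_r$ does not decrease under surjective homomorphisms'' makes this unrecoverable: the paper instead uses the exact identity $p_r(G/Z)=|Z|_r\,p_r(G)$ (Lemma \ref{QZ}) to win back a factor $(d,q-1)_r$, combines the $d$-cycle and $(d-1)$-cycle tori (of orders $(q^d-1)/(q-1)$ and $q^{d-1}-1$), and runs a case analysis on $d_r$ versus $(q-1)_r$ to reach $1/d$. You flag the need to exploit the centre only for $r=2$, but it bites for every $r\mid q-1$, and the same issue ($(d,q+1)_r$) arises for $\PSU_d(q)$ when $r \mid q+1$.

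Second, in part (b) the mechanisms you describe would not produce the stated exponents. A single-Singer-block family in $\Sp_{2n}(q)$ has density only $O(1/n)=O(1/d)$ (the Coxeter class of $C_2\wr S_n$ has density $1/(2n)$), so it cannot give $C_2 d^{-1/2}$; the paper's $d^{-1/2}$ comes from summing over \emph{all} cycle types: for $\tau\in S_n$ with $k$ cycles, at least $2^{n-k}$ of the $2^n$ sign assignments yield a torus of order prime to $r$ (since $r$ odd cannot divide both $q^b-1$ and $q^b+1$), and
\[
\sum_{k=1}^{n}\frac{c(n,k)}{2^{k}n!}=\frac{(2n)!}{2^{2n}n!^2}\sim\frac{1}{\sqrt{\pi n}}.
\]
Likewise the exponent $3/4$ for $r=2$ is not a Hardy--Ramanujan partition exponent but the Stirling-number asymptotic $\sum_k c(n,k)(1/4)^k/n! = \Gamma(n+1/4)/(\Gamma(1/4)\Gamma(n+1)) \asymp n^{-3/4}$, the weight $1/4$ per cycle arising as $\tfrac12$ (the proportion of sign choices making $(q^{b}\pm1)_2$ minimal) times $\tfrac12$ (the residual $2$-part of that torus factor). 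Without these Weyl-group generating-function computations, the explicit constants $C_1$, $C_2$ claimed in the statement are not obtainable from your outline.
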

We show that these lower bounds are essentially best possible for infinitely many orders of the 
underlying field (and indeed infinitely many fields in infinitely many characteristics).  
\begin{thm} \label{t:simplebestlb}
{\rm (a)} Let $G = \PSL_d(q)$ or $\PSU_d(q)$ and let $\epsilon >0$. Then 
there exist infinitely many primes, and infinitely many powers $q$ of each, for which $p_r(G) \le \frac{1}{d} + \epsilon.$ 

{\rm (b)} Let $G = \Sp_{d}(q)$, $\SO^{\pm}_d(q)$, $\SO^{\circ}_d(q)$, $\Om^{\pm}_d(q)$, $\Om^{\circ}_d(q)$ or one of the corresponding projective groups.
 There exist explicit constants $C_3$, $C_4$, infinitely many primes, and infinitely many powers $q$ of  each, for which
  \[ p_r(G) \le \begin{cases}
  C_3 d^{-3/4} & \mbox{ if $r =2$;}\\
  C_4 d^{-1/2} & \mbox{ if $r$ is odd.}
 \end{cases}\]
\end{thm}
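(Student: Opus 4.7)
The plan is to exhibit explicit infinite families $(d, q, r)$ for which $p_r(G)$ attains the stated upper bounds, demonstrating the tightness of Theorem~\ref{t:simplelb}. The unified approach parametrizes regular semisimple classes by maximal tori and evaluates the resulting sums via generating functions. Since $r \ne \mathrm{char}(\mathbb{F}_q)$, the Jordan decomposition reduces the question to counting semisimple classes whose order is coprime to $r$.

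For part~(a), consider $G = \PSL_d(q)$; the $\PSU$ case is analogous with $(-1)^{k}$ sign adjustments. $G$-conjugacy classes of maximal tori of $\GL_d(q)$ are indexed by partitions $\lambda = (k_1, \ldots, k_m) \vdash d$, with $T_\lambda$ essentially $\prod_i \mathbb{F}_{q^{k_i}}^*$. For regular semisimple elements, the proportion of $G$-elements of torus type $\lambda$ converges to $1/z_\lambda$ as $q \to \infty$, and the proportion of $r$-regular elements within $T_\lambda$ equals $\prod_i 1/(q^{k_i} - 1)_r$. I would choose $r$ and $q$ so that $(q^k - 1)_r$ is uniformly large for $k \geq 1$: for example, taking $r$ odd with $r^N \mid q - 1$, the contributions from partitions with $\ell(\lambda) \geq 2$ parts sum to $O(r^{-2N})$, leaving the Singer torus $\lambda = (d)$ as the dominant contribution, of order $1/d$. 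The passage to the projective quotient $\PSL_d(q)$ (center of order $\gcd(d, q-1)$) introduces a correction that does not affect the leading asymptotic. Dirichlet's theorem on primes in arithmetic progressions then provides infinitely many such $q$ for each $r$.

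For part~(b), the Weyl group is the hyperoctahedral $W = S_n \wr C_2$ (with $n = \lfloor d/2 \rfloor$), or an index-$2$ subgroup for $\Om^+$. Maximal tori are indexed by pairs of partitions $(\mu, \nu)$ with $|\mu| + |\nu| = n$, and $T_{(\mu, \nu)}$ is a product of factors of the form $q^{k_i} - 1$ (from $\mu$) and $q^{\ell_j} + 1$ (from $\nu$). Summing the torus contributions gives
\[
 p_r(G) \approx \sum_{(\mu,\nu)} \frac{1}{z_{(\mu,\nu)}} \prod_i \frac{1}{(q^{k_i} - 1)_r} \prod_j \frac{1}{(q^{\ell_j} + 1)_r},
\]
which is the coefficient of $u^n$ in an exponential generating function $\exp\bigl(\sum_k a_k u^k / k\bigr)$ for weights $a_k$ determined by the $r$-parts of $q^k \mp 1$. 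By Hardy--Ramanujan-type asymptotics (or the transfer theorem for algebraic-logarithmic singularities on the unit circle), this coefficient decays like $C_3 n^{-3/4}$ when $r = 2$ and $C_4 n^{-1/2}$ when $r$ is odd, matching the form of the lower bounds.

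The main obstacle is the generating-function analysis in part~(b): precisely identifying the singularity type of $\exp(\sum a_k u^k/k)$ near $u = 1$. The exponent $-3/4$ versus $-1/2$ depends delicately on the $r$-adic valuations of $q^k \pm 1$ across all $k$, and the analysis must be sharp enough to match the lower bounds in Theorem~\ref{t:simplelb}(b) up to constants. A secondary technical step is a Zsygmondy-type argument showing that the desired divisibility conditions on $q$ can be simultaneously realized in infinitely many characteristics, producing the required explicit constants $C_3$, $C_4$.
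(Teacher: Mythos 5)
Your part (a) is essentially the paper's argument (Lemmas \ref{l:LUB} and \ref{l:UUB}): force $r^{N}\mid q-1$ (resp.\ $q+1$ in the unitary case, which is why only infinitely many primes $p$ --- those of even order mod $r$ --- are used there) so that every non-Singer torus contributes less than $\epsilon/d_r$, leaving the Singer contribution $1/(d\,d_r)$, which the central factor $(d,q-1)_r=d_r$ restores to exactly $1/d$. Note that you do need $r^{N}\ge d_r$ for that last equality; your remark that the centre ``does not affect the leading asymptotic'' is really an exact cancellation, not an error term, and it is worth making it explicit.

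Part (b), however, has a genuine gap, and it is the one you flag yourself. You never specify the arithmetic condition on $q$, and without it the asserted $n^{-3/4}$ decay is simply not available for $r=2$: for $q\equiv -1\pmod 4$ the tori built from odd-length positive cycles and even-length negative cycles all have cyclic factors of $2$-part exactly $2$, and one only gets an $n^{-1/2}$ bound (cf.\ Corollary \ref{c:S}). The paper's resolution (Lemmas \ref{l:SpUB} and \ref{l:SOUB}) is the same device as in part (a): take $r^{a}\mid q-1$ with $r^{-a}<\epsilon/2$, so that every torus containing a positive cycle contributes at most $r^{-a}$ and all such classes together contribute less than $\epsilon/2$, while a torus with $k$ negative cycles and no positive ones has $|Q\cap T|/|T|=(2,r)^{-k}$ exactly by Lemma \ref{lem:2parts}. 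The surviving sum then requires no singularity analysis: it is the exact identity \eqref{e:genstirlingsum}, $\sum_k c(n,k)x^k/n!=\Gamma(n+x)/(\Gamma(x)\Gamma(n+1))$ with $x=1/2$ or $1/4$, giving $\fr$ (and its odd-$k$/even-$k$ refinements $\godd$, $\geven$ for $\SO^{\pm}_{2n}$, where the Witt defect fixes the parity of the number of negative cycles). The explicit constants $C_3$, $C_4$ come from the Ke{\v{c}}ki{\'c}--Vasi{\'c} inequality \eqref{e:KV} and Lemma \ref{l:npi}, not from a transfer theorem. Your route through the singularity of $\exp(\sum_k a_k u^k/k)$ could be made to work, but only after the same choice of $q$ has pinned down the weights $a_k$, at which point the closed Gamma-ratio form makes the analytic machinery unnecessary. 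Finally, ``infinitely many primes'' needs no Zsygmondy-type input: for any $p\ne r$ choose $j$ with $r^{a}\mid p^{j}-1$ and set $q=p^{jr^{b}}$ for $b=1,2,\dots$.
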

In addition, we prove some general upper bounds in the case $r=2$.
At this stage it is not clear to us whether these are essentially best possible.
\begin{thm} \label{t:simpleub1}
{\rm (a)} Let $G = \PSL_d(q)$ or $\PSU_d(q)$. Then, 
for each odd $q$, there exists an explicit constant $C_q$ such that for all $d \ge 2$ we have 
\[ p_2(G) \le  \frac{C_q}{\sqrt{d}}.\] 

{\rm (b)} Let $G = \Sp_{d}(q)$, $\SO^{\pm}_d(q)$, $\SO^{\circ}_d(q)$, $\Om^{\pm}_d(q)$, $\Om^{\circ}_d(q)$ or one of the corresponding projective groups.
Then there exist constants $C_5$, $C_6$ such that for all odd $q$ and all $d \ge 4$
\[ p_2(G) \le \begin{cases}
  C_5 d^{-5/8} & \mbox{ if $q \equiv 1 \imod{4}$;}\\
  C_6 d^{-1/2} & \mbox{ if $q \equiv -1 \imod{4}$.}
  \end{cases}\]
\end{thm}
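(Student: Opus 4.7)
The plan is to bound $p_2(G)$ from above by means of the cycle-index generating function for each family of classical groups, complementing the analytic methods behind Theorems~\ref{t:simplelb} and~\ref{t:simplebestlb} but now reading off an \emph{upper} bound for $p_2(G)$ from the singularity behaviour of the generating function. Since $q$ is odd, an element $g\in\GL_d(q)$ has odd order if and only if every irreducible factor $\phi$ of its characteristic polynomial has all roots of odd multiplicative order in $\overline{\mathbb{F}}_q^*$; I will call such a polynomial \emph{odd}.

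For part (a), I would first reduce $\PSL_d(q)$ and $\PSU_d(q)$ to their linear and unitary covers, and then to $\GL_d(q)$ and $\GU_d(q)$ respectively: for odd $q$ the successive kernels are $2$-groups and $2$-regular classes in the quotient lift to $2$-regular classes in the cover, so these reductions change $p_2$ only by a bounded multiplicative factor depending on $q$. The proportion of $2$-regular elements in $\GL_d(q)$ can then be written as $[u^d]$ of the generating function
\[
\Phi_q(u) \;=\; \prod_{\phi\ \mathrm{odd}} h_\phi(u),
\]
where each $h_\phi$ is the Stong--Fulman cycle-index factor associated to $\phi$, a sum over partitions encoding the generalized Jordan data. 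The density of odd irreducibles of degree $k$ over $\mathbb{F}_q$ is $1/(q^k-1)_2$ times the density of all irreducibles, since the Galois orbits of odd-order roots form that fraction of $\mathbb{F}_{q^k}^*$. The resulting $\Phi_q$ is analytic in $|u|<1$ with a branch-type singularity of order $-1/2$ at $u=1$; a Darboux/singularity-analysis argument then yields the upper bound $C_q/\sqrt{d}$ on its coefficients, and the unitary case follows by the formal substitution $q\mapsto -q$ in the cycle index.

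For part (b), I would use the classical-group cycle index of Fulman, Neumann and Praeger. Irreducible polynomials now fall into three classes: the linear factors $x\pm 1$, the self-dual irreducibles with $\phi=\phi^*$, and the dual pairs $\{\phi,\phi^*\}$. The analogous generating function again factorises, and the singular exponent at $u=1$ is determined by combining the contributions of these three types. The dichotomy at $q \imod 4$ enters through the linear factors: when $q\equiv -1 \imod 4$ only the factor at $x-1$ admits odd roots (and does so with density $1/2$), producing singular exponent $-1/2$ and the bound $C_6 d^{-1/2}$; when $q\equiv 1 \imod 4$ both $x\pm 1$ are constrained, and the extra parity restriction coming from the higher $2$-part of $q-1$ shifts the effective exponent to $-3/8$, yielding the improved bound $C_5 d^{-5/8}$.

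The principal technical obstacle is upgrading the asymptotic singularity analysis to bounds with \emph{explicit} constants $C_q,C_5,C_6$ that are valid for all $d\ge 2$ (respectively $d\ge 4$), not merely in the limit. This requires quantitative tail estimates for the infinite products defining $\Phi_q$ and its symplectic/orthogonal analogues, obtained by truncating at a $q$-dependent degree and bounding the remaining factors using the geometric decay of $\binom{1}{1-q^{-k}}$-type terms. A secondary complication is that $\SO^{\pm}$, $\SO^{\circ}$, $\Om^{\pm}$, $\Om^{\circ}$ and their projective quotients must each be treated separately, because their centres and component groups differ; however, these quotients only change $p_2$ by a factor of bounded order depending on $q$, and so do not affect the $d$-dependence, which is what drives the stated upper bounds.
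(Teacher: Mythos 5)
Your route via the Stong--Fulman and Fulman--Neumann--Praeger cycle indices is genuinely different from the paper's, which never leaves the Weyl group: by Lemma \ref{thm:maintool}, a class of $W$ with $k$ cycles contributes a torus $T$ with at least $k-1$ cyclic factors of even order, so $|Q\cap T|/|T|\le 2^{1-k}$, and the resulting sums $\sum_k c(d,k)/(d!\,2^{k})$ and $\sum_k c(n,k)(3/8)^k/n!$ are evaluated exactly by \eqref{e:fr} and \eqref{e:sumcnkbound}. However, your sketch contains a genuine error at its analytic core: the singularity of $\Phi_q$ at $u=1$ is \emph{not} of order $-1/2$. Writing $\Phi_q(u)=(1-u)^{-1}\prod_{\phi\ \mathrm{not\ odd}}(\cdots)$ and taking logarithms, the branch exponent is a $q$-dependent quantity governed by $\sum_k u^k/\bigl(k(q^k-1)_2\bigr)$, roughly $\tfrac{1}{2(q-1)_2}$ plus smaller even-degree contributions; it can be made arbitrarily close to $0$ by choosing $(q-1)_2$ large (exactly the phenomenon exploited in Lemma \ref{l:alpha}, where the coefficients decay like $d^{\epsilon-1}$). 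So the statement you actually need is that this exponent is \emph{at most} $1/2$, and proving that requires precisely the $2$-adic valuations $(q^k\pm1)_2\ge 2$ of Lemma \ref{lem:2parts} --- the same input the paper feeds directly into the torus orders. Likewise in part (b) the exponents are not determined by the linear factors $x\pm1$: they come from the aggregate of $(q^k-1)_2$ and $(q^k+1)_2$ over all degrees, the $q \imod{4}$ dichotomy entering because $(q^k-1)_2\ge4$ while $(q^k+1)_2=2$ when $q\equiv1\imod{4}$ (giving the average weight $\tfrac12(\tfrac14+\tfrac12)=\tfrac38$ per cycle and hence $d^{-5/8}$), versus only $(q^k\pm1)_2\ge2$ when $q\equiv-1\imod{4}$.

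The second gap is the one you name but do not close, and it is not a routine technicality: singularity analysis gives asymptotics as $d\to\infty$, whereas the theorem demands explicit constants valid for every $d\ge2$ (resp.\ $d\ge4$). Turning $\Phi_q(u)\sim C(1-u)^{-\gamma}$ into such a uniform bound needs quantitative control of the infinite products near $u=1$, a substantial piece of work your proposal leaves entirely open; your reduction from $\PSL_d(q)$ and $\PSU_d(q)$ to $\GL_d(q)$ and $\GU_d(q)$ must also be made precise in the upper-bound direction (the paper instead uses the exact relation $p_2(\PSL_d(q))=(d,q-1)_2\,p_2(\SL_d(q))$ from Lemma \ref{QZ}). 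By contrast, the torus computation produces closed-form finite sums, and Lemma \ref{l:npi} turns them into the explicit constants $C_q=2(q\pm1)_2/\sqrt{\pi}$, $C_5$, $C_6$ in one line. I would redo the argument along those lines.
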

We note that we can take $C_q = 2(q-1)_2/\sqrt{\pi }$ when $G=\PSL_d(q)$ and $C_q= 2(q+1)_2/\sqrt{\pi }$ when $G=\PSU_d(q)$, where $(q \pm1)_2$ denotes the largest power of $2$ dividing $(q \pm 1)$. In fact, we sharpen our bounds considerably in Theorem \ref{t:main} below.
As a notational convenience, we will frequently write the dimension $d$ of the natural module for $G$ in terms of the (untwisted) Lie rank $n$. In order to express our sharper lower bounds more succinctly, we define  the following functions of $n$:
 \begin{align} \label{e:frfact}
 \fr = \begin{cases}
  \frac{(2n)!}{2^{2n}n!^2}& \hbox{if $r$ is odd;}  \\
  \frac{\Gamma(n+ 1/4)}{\Gamma(1/4) \Gamma(n+1)}  & \hbox{if  $r=2$;}  
      \end{cases}  
\end{align} 
 \begin{align} \label{e:goddgamma}
 \godd  =  \begin{cases}
 \frac{(2n)!}{2^{2n}n!^2} \cdot \left( \frac{2n}{2n-1}\right) & \hbox{if $r$ is odd;}  \\
  \frac{\Gamma(n+ 1/4)}{\Gamma(1/4) \Gamma(n+1)} + \frac{\Gamma(n- 1/4)}{4\Gamma(3/4) \Gamma(n+1)}  & \hbox{if  $r=2$;}  
      \end{cases}  
\end{align} 
and
 \begin{align}  \label{e:gevengamma}
 \geven =  \begin{cases}
 \frac{(2n)!}{2^{2n}n!^2}\cdot  \left( \frac{2n-2}{2n-1}\right)& \hbox{if $r$ is odd;}  \\
  \frac{\Gamma(n+ 1/4)}{\Gamma(1/4) \Gamma(n+1)} - \frac{\Gamma(n- 1/4)}{4\Gamma(3/4) \Gamma(n+1)}  & \hbox{if  $r=2$.}  
      \end{cases}  
\end{align}
See section 2 for details.  In particular, equations \eqref{e:godd} and \eqref{e:geven} provide some insight into our choice of notation for the functions $\goddfn$ and $\gevenfn$.

\begin{thm} \label{t:main}
Let $q$ be a  prime power, $r$ a prime not dividing $q$ and $\epsilon \in \mathbb{R}$ with $0< \epsilon <1$. Let $G = X_d(q)$ be a finite classical group with natural module of dimension $d$ and (untwisted) Lie rank $n$. 
 Let $p_r(G)$ denote  the proportion of $r$-regular elements in $G$.
 Suppose that $X$, $d$  and $\hxr$ are defined in one of the rows of  Table \ref{tab:main}.\\
 \emph{(i)} Then 
 \[p_r(G) \ge \hxr(n).\]
  \emph{(ii)} For infinitely many primes $p$, there exist infinitely many  powers $q$ of $p$ for which
   \[p_r(G) < \hxr(n) + \epsilon.\] \emph{(iii)} Moreover,  $p_r(G/Z(G))  = |Z(G)|_rp_r(G)$.\\
 \begin{table}[htdp] 
\begin{center}\begin{tabular}{ccc}\hline $X_d$ & {\rm Conditions}  & $\hxr(n)$ \vspace*{0.05cm} \\
\hline \\ [-2.5ex]
  $\PSL_{n+1}$ &  & $1/(n+1)$ \\
  $\PSU_{n+1}$ &  & $1/(n+1)$ \\
$\Sp_{2n}$ &  & $\fr$ \\
 $\SO^{\circ}_{2n+1}$ &  & $\fr$ \\
   $\Om^{\circ}_{2n+1}$ & & $(2,r)\fr$ \\
    $\SO^+_{2n}$ & & $\geven$\\
    $\Om^+_{2n}$ &  & $(2,r)\geven$ \\
     $\SO^-_{2n}$ &  {\rm $n$ even} & $\godd$ \\
        $\SO^-_{2n}$ &  {\rm $n$ odd} & $\geven$ \\
        $\Om^-_{2n}$ &  {\rm $n$ even} & $(2,r)\godd$ \\
           $\Om_{2n}^-$ &  {\rm $n$ odd} & $(2,r)\geven$   \vspace*{0.05cm}\\
          \hline
           \end{tabular} 
             \vspace*{0.15cm}
           \caption{Lower bounds $\hxr(n)$ on $p_r(X_d(q))$ }
           \label{tab:main}
\end{center}
\end{table}
           

\end{thm}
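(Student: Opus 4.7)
The plan is to express $p_r(G)$ as a normalised sum over the Weyl group $W$ of the proportions of $r$-regular elements in the maximal tori of $G$. Since the Jordan decomposition $g=g_sg_u$ has $|g_u|$ a power of $p\ne r$, an element is $r$-regular iff its semisimple part is, and a Weyl-integration argument (in the spirit of \cite{BPS,GP}) yields
\[
 p_r(G)\;=\;\frac{1}{|W|}\sum_{w\in W}\frac{|(T_w^F)_{r'}|}{|T_w^F|},
\]
where $T_w^F$ is the $F$-stable maximal torus of type $w$ and depends only on the $F$-conjugacy class of $w$. I would begin by recording this identity and expanding each $|T_w^F|$ as an explicit product of cyclotomic values $q^{a_i}\pm 1$ indexed by the (signed) cycle structure of $w$ in $S_{n+1}$, the hyperoctahedral group $B_n$, or its index-two subgroup $D_n$. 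The inner ratio factors over these cyclotomic components, and its $r$-adic behaviour drives every subsequent estimate.

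\textbf{Lower bounds.} For part (i), I would bound the sum below by restricting to a well-chosen subset $W_{\mathrm{good}}\subseteq W$ on which the inner ratio admits a uniform lower bound. For $\PSL_{n+1}$ and $\PSU_{n+1}$, $W_{\mathrm{good}}$ is essentially the conjugacy class of a Coxeter-type element; a careful analysis of the $r$-regular content of the associated cyclic torus, sharpening the strategy of \cite{BPS}, produces the clean bound $1/(n+1)$. For the symplectic and orthogonal types, $W_{\mathrm{good}}$ consists of signed permutations whose signed cycle lengths lie in prescribed residue classes determined by the multiplicative order of $q$ mod $r$; the resulting counting sum reduces, via standard hypergeometric identities, to the central binomial $\binom{2n}{n}/4^n$ for $r$ odd and to the Gamma coefficient $\Gamma(n+1/4)/[\Gamma(1/4)\Gamma(n+1)]$ for $r=2$, matching $\fr$. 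The factors $2n/(2n-1)$ and $(2n-2)/(2n-1)$ distinguishing $\goddfn$ from $\gevenfn$ reflect a parity constraint on the number of negative signed cycles, which separates $\O^+_{2n}$ from $\O^-_{2n}$ and forces the case split for $\SO^-_{2n}$ by the parity of $n$.

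\textbf{Tightness and the central quotient.} For part (ii), I would choose primes $p$ and powers $q$ with $r\mid q-1$ (respectively $4\mid q-1$ when $r=2$) and with $v_r(q-1)$ arbitrarily large; Dirichlet's theorem supplies infinitely many such $q$ in infinitely many characteristics. Under this choice every cyclotomic factor $q^a\pm 1$ in a full-rank torus has large $r$-part, so $|(T_w^F)_{r'}|/|T_w^F|$ collapses uniformly to its combinatorial minimum and the Weyl-group average converges to $\hxr(n)$ from above; taking $v_r(q-1)$ large enough forces $p_r(G)<\hxr(n)+\epsilon$. Part (iii) is independent of the Lie-theoretic machinery: working in the cyclic group $\langle g\rangle$ with Jordan decomposition $g=g_rg_{r'}$, the coset $gZ(G)$ is $r$-regular in $G/Z(G)$ iff $g_r\in Z(G)_r$, and for any such lift $g$ precisely the lifts $gz$ with $z_r=g_r^{-1}$ are $r$-regular in $G$; the $r'$-part of $z$ is free in $Z(G)_{r'}$, so there are exactly $|Z(G)|_{r'}$ such lifts. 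Hence $|G_{r'}|=|Z(G)|_{r'}\cdot|(G/Z(G))_{r'}|$, which rearranges to $p_r(G/Z(G))=|Z(G)|_r\,p_r(G)$. The main technical obstacle is the row-by-row combinatorial identification of each restricted Weyl sum with the exact closed form $\hxr(n)$; the tightness step further requires a uniform-in-$w$ error estimate showing that tori outside the dominant subset contribute $o(1)$ as $v_r(q-1)\to\infty$.
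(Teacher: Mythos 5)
Your skeleton matches the paper's: the averaging identity you start from is exactly the quokka-set formula of Lemma \ref{thm:maintool} (from Niemeyer and Praeger), the lower bounds are obtained by restricting to unions of $F$-classes of $W$ whose tori have controlled $r$-part, the parity-of-negative-cycles mechanism behind $\godd$ versus $\geven$ is the right one, and your lift-counting argument for (iii) is a correct and slightly more streamlined version of Lemma \ref{QZ}. However, two steps would fail as written.

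The serious gap is in part (ii): you propose taking $r\mid q-1$ with $(q-1)_r$ large uniformly across all rows of Table \ref{tab:main}. For $\PSU_{n+1}$ this choice proves the wrong inequality. If $r$ is odd and $r\mid q-1$, then $r$ divides no factor $q^{b_i}+1$ with $b_i$ odd, so every maximal torus of $\SU_d(q)$ indexed by a permutation all of whose cycle lengths are odd consists entirely of $r$-regular elements; hence $p_r(\SU_d(q))\ge s_{\neg 2}(d)=\Theta(d^{-1/2})$, which exceeds $1/d+\epsilon$ once $d$ is moderately large and $\epsilon$ small, so the bound you want is simply false for your choice of $q$. One must instead arrange $r^a\mid q+1$, which is possible only when $p$ has even multiplicative order modulo $r$ --- this is precisely why the theorem asserts the bound for ``infinitely many primes $p$'' rather than for all $p$ (Lemma \ref{l:UUB}). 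The same failure occurs for $\SO^-_{2n}$ and $\Om^-_{2n}$ with $n$ odd, where $r\mid q-1$ forces $p_r\ge\godd>\geven+\epsilon$ and again $r^a\mid q+1$ is required (Lemma \ref{l:SOUB}, line 4 of Table \ref{tab:caseO}). A second, smaller gap: for the $1/(n+1)$ lower bound in the $\PSL$ and $\PSU$ rows, the Coxeter class alone does not suffice. When $r\mid q-1$ its torus contributes $\frac{1}{d\,d_r}$, and after multiplying by $|Z|_r=(d,q\mp 1)_r$ via your part (iii) one obtains $1/d$ only if $d_r\le(q\mp 1)_r$; when $d_r>(q\mp1)_r$ (e.g.\ $d=r^2$ and $(q-1)_r=r$) the contribution falls short, and one must also include the $(d-1)$-cycle tori and run the case analysis of Lemma \ref{l:psl1}, using that $(d-1)_r=1$ in that situation.
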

\begin{remark} \label{rem:main}
 \emph{(a) For fixed $X$, $d$ and $\epsilon$, we show, in many cases,  that for all primes $p$ (distinct from $r$), there exist infinitely many powers $q$ of $p$ for which $p_r(X_d(q)) < \hxr(n)+\epsilon$. See Lemmas \ref{l:LUB}, \ref{l:UUB}, \ref{l:SpUB} and \ref{l:SOUB} for details.}\\
 \emph{(b) For each row of Table  \ref{tab:main}, the function $\hxr$ is the same for every \emph{odd} prime $r$.} \\
    \emph{(c) Much better lower bounds are possible for certain values of $q$ and $r$. For example, if $G=\PSL_d(q)$ and the multiplicative order $m$ of $q$ modulo $r$ is at least $2$, then $p_r(G) \ge Cd^{-1/m}$, where $C$ is an explicit constant. See Lemmas \ref{l:psl1}, \ref{l:psu1}, \ref{l:Sp} and \ref{l:SO} for details.} \\
    \emph{(d) The lower bound $p_r(G) \ge 1/2d$ in \cite{BPS} also holds when $r|q$, whereas our results do not deal with this case. However, Guralnick and Lubeck \cite{GuLu} have shown that, when $r|q$, we have $$p_r(X_n(q)) \ge  1- 3/(q-1)-2/(q-1)^2$$ for all simple groups of Lie type $X_n(q)$. It follows that the bounds in Table \ref{tab:main} hold when $r|q$ unless $q$ is small. In any case, the proof of the lower bound $p_r(G) \ge 1/2d$ in \cite{BPS} actually shows that $p_r(G) \ge 1/d$ when $r |q$; thus the lower bounds in Table \ref{tab:main} for $\PSL_d(q)$ and $\PSU_d(q)$ hold for all values of $r$ and $q$.}
\end{remark}

Remark \ref{rem:main}(c) above answers an open question in \cite[Section 8]{BPS}; moreover the following theorem answers the main open question in \cite[Section 8]{BPS}. See Section \ref{openqs} for details, the answer to a further open question and for the proof of Theorem \ref{t:limsup}.
\begin{thm} \label{t:limsup}
For all $ \epsilon >0$, there exists a prime power $q$ and a constant $C$ such that, for all $d\geq2$, 
\[p_2(\PSL_d(q)) \le C d^{\epsilon-1}.\]
\end{thm}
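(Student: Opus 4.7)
The target upper bound $p_2(\PSL_d(q)) \le Cd^{\epsilon-1}$ should be read in tandem with the matching lower bound $p_2(\PSL_d(q)) \ge 1/d$ from Theorem~\ref{t:simplelb}(a): the aim is to show that, for a suitable choice of $q$ depending on $\epsilon$, the true value is within a factor of $d^\epsilon$ of that lower bound. Since the lower bound is attained to within $\epsilon$ for individual $d$ by Theorem~\ref{t:main}(ii), the real content is to find a \emph{single} prime power $q$ that works uniformly for all $d \ge 2$.

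My plan is to start from the refined upper bound analysis of $\PSL_d(q)$ used to prove Theorem~\ref{t:simplebestlb}(a), which is presumably contained in Lemma~\ref{l:LUB}. That analysis decomposes the counting of $2$-regular elements by the degrees of the irreducible factors of the characteristic polynomial, isolating a main term of order $1/d$ coming from Singer-type classes (irreducible characteristic polynomial of degree $d$) and error terms indexed by the finer cycle types. Using Theorem~\ref{t:main}(iii), I first pass from $\PSL_d(q)$ to $\SL_d(q)$ at the cost of the factor $|Z(\SL_d(q))|_2 = \gcd(d,(q-1)_2)$, and I would bound the contribution of each cycle type by a product of local factors of the form $1/(q^i-1)_{2'}$ times combinatorial weights from partitions.

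To drive the error terms below $d^{\epsilon-1}$, I would choose $q$ so that $v_2(q^i-1)$ is large for all small $i$. Concretely, taking $q$ a prime with $q \equiv 1 \imod{2^N}$ for some $N = N(\epsilon)$, Lifting the Exponent yields $v_2(q^i-1) \ge N$ for every $i \ge 1$, so that each non-Singer cyclic factor of the characteristic polynomial contributes at most $1/2^N$ to the proportion of $2$-regular elements from that class, while the Singer contribution itself stays $\sim 1/d$. Summing over all cycle types, with $N$ chosen large enough relative to $\epsilon$, this gives a bound of shape $(1+o(1))/d$ plus a tail of size $O(2^{-N})$, which one converts into the stated $Cd^{\epsilon-1}$ bound by a case split at $d_0 = d_0(\epsilon)$ (trivial bound $p_2 \le 1$ for $d \le d_0$, refined estimate for $d > d_0$, absorbing constants into $C$). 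The main obstacle is establishing the uniformity in $d$: for fixed $q$ the number of cycle types grows like $p(d)$, and one must check that the combinatorial weights do not inflate the geometric gain from $(q^i-1)_2$, so that the cumulative non-Singer contribution truly decays faster than $d^{\epsilon-1}$ rather than merely faster than $d^{-1/2}$ as in the classical bound of \cite{BPS}.
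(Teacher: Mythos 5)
Your setup is the same as the paper's (Lemma~\ref{l:alpha}): fix $N$ with $2^{-N}<\epsilon$, choose $q$ with $2^{N}\mid q-1$, decompose over conjugacy classes of maximal tori via Lemma~\ref{thm:maintool}, and note that a torus corresponding to $w\in S_d$ with $k$ cycles has at least $k-1$ cyclic factors of order $q^{i}-1$, each of which multiplies $|T\cap Q|/|T|$ by at most $2^{-N}$ (the relevant local factor is $1/(q^{i}-1)_2$, the reciprocal of the $2$-part, not $1/(q^i-1)_{2'}$). The gap is in the final summation, which you correctly single out as the main obstacle but then do not resolve. The bound you assert, ``$(1+o(1))/d$ plus a tail of size $O(2^{-N})$,'' is neither correct nor sufficient: a tail that is a positive constant independent of $d$ can never be converted into $Cd^{\epsilon-1}$ for all $d\ge 2$ by a case split at $d_0(\epsilon)$, because $d^{\epsilon-1}\to 0$ while the constant does not. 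Nor is the main term $(1+o(1))/d$: already the classes with $k=2$ cycles have total weight $c(d,2)/d!\asymp(\log d)/d$, so their contribution $2^{-N}(\log d)/d$ dominates $1/d$ for large $d$.

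What the sum actually gives is
\[
p_2(\SL_d(q)) \;\le\; \frac{1}{d} \;+\; \sum_{k=2}^{d}\frac{c(d,k)}{d!\,2^{N(k-1)}} \;=\; 2^{N}\sum_{k=1}^{d}\frac{c(d,k)}{d!}\,(2^{-N})^{k},
\]
and the essential step — the one missing from your argument — is the evaluation of the right-hand side. By the Stirling-number identity \eqref{e:genstirlingsum} it equals $2^{N}\,\Gamma(d+2^{-N})/\bigl(\Gamma(2^{-N})\Gamma(d+1)\bigr)$, and the Ke{\v{c}}ki{\'c}--Vasi{\'c} inequality \eqref{e:KV}, as packaged in \eqref{e:sumcnkbound}, bounds this by $C'd^{2^{-N}-1}\le C'd^{\epsilon-1}$. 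Passing to $\PSL_d(q)$ via Lemma~\ref{QZ} costs a factor $(d,q-1)_2\le(q-1)_2$, a constant once $q$ is fixed, which is absorbed into $C$. Without this identity, or some equivalent uniform control of $\sum_k c(d,k)x^{k}/d!$ for small $x>0$, the argument does not close; with it, your outline becomes exactly the paper's proof.
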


We prove Theorem \ref{t:main} for linear, unitary, symplectic and odd-dimensional orthogonal, and even-dimensional orthogonal groups in sections $3$, $4$, $5$ and $6$ respectively.  Theorem \ref{t:simplelb} is a direct consequence of Theorem \ref{t:main}. Theorems \ref{t:simplebestlb} and \ref{t:simpleub1} are proved in Sections $3$, $4$, $5$, and $6$.  In particular, the constants $C_1$, $C_2$, $C_3$, $C_4$ involved in the statements can be obtained from Corollaries \ref{c:S} and \ref{c:caseO}. \\

\section{Preliminaries}
Let $\Gbar$ be a connected reductive algebraic group defined over $\bar{\mathbb{F}}_q$, the algebraic closure of a field $\mathbb{F}_q$ of order $q$.  Let $F$ be a Frobenius morphism of $\Gbar$, and let $G=\Gbar^F$ be the subgroup of $\Gbar$ fixed elementwise by $F$, so that $G$ is a finite group of Lie type. Let $\Tbar$ be an $F$-stable maximal torus in $\Gbar$ and let $W:=N_{\Gbar}(\Tbar)/\Tbar$ denote the Weyl group of $\Gbar$. We will say that two elements $w, w^{\prime}$ in $W$ are $F$-conjugate if there exists $x$ in $W$  such that $w^{\prime}= x^{-1}wF(x)$. This is an equivalence relation, and we will refer to the equivalence classes as \textit{$F$-classes}. Moreover, there is an explicit one-to-one correspondence between the $F$-classes of $W$ and the $\GF$-conjugacy classes of maximal tori in $\GF$. A thorough description of the correspondence can be found in \cite{NiePra}, and we will summarize the necessary results below. Although the correspondence is between $F$-classes in $W$ and $\GF$-conjugacy classes of maximal tori in $\GF$, we will frequently refer to an \textit{element} of $W$ corresponding to a maximal torus in $\GF$, rather than a $\GF$-conjugacy class of tori, where there is little possibility of confusion.

Now recall that every element $g$ in $\GF$ can be expressed uniquely in the form $g=su$, where $s \in \GF$ is semisimple, $u \in \GF$ is unipotent and $su=us$. This is the multiplicative Jordan decomposition of $g$ (see \cite[p. 11]{Carter2}). We now define a quokka set as a subset of a finite group of Lie type that satisfies certain closure properties.

\begin{dfn}
{\rm Suppose that $\GF$ is a finite group of Lie type. A nonempty subset $Q$ of $\GF$ is called a \textit{quokka set}, or quokka subset of $\GF$,  if the following two conditions hold.
\begin{enumerate}
 \item[(i)] For each $g \in \GF$ with Jordan decomposition $g=su=us$, where $s$ is the semisimple part of $g$ and $u$ the unipotent part of $g$, the element $g$ is contained in $Q$ if and only if $s$ is contained in $Q$; and
\item [(ii)] the set $Q$ is a union of $\GF$-conjugacy classes.
\end{enumerate}}
\end{dfn}
For $\GF$ a classical group, and a prime $r$ not dividing $q$, define the subset
\begin{displaymath}
Q(r,\GF) := \{ g \in \GF \,: \, r\,\nmid |g| \},
\end{displaymath}
consisting of all the $r$-regular elements $g$ in $\GF$.  We readily see that $Q(r,\GF)$ is a quokka set.



Applying \cite[Theorem 1.3]{NiePra} we have

\begin{lem} \label{thm:maintool}
Let $\GF$, $W$, $Q(r,\GF)$ be as above, let $C$ be a subset of $W$ consisting of a union of $F$-classes of $W$. For each $F$-class $\B$ in $W$, let $T_{\B}$ denote a maximal torus corresponding to $\B$.
Then
\begin{align} \label{eqn:maintool1}
\frac{|Q(r,\GF)|}{|\GF|} &= \sum_{\B \subset W}\frac{|\B|}{|W|}.\frac{|T_{\B} \cap Q(r,\GF)|}{|T_{\B}|},
\end{align}
and if there exists a constant $A \in [0,1]$ such that $\frac{|T_{\B} \cap Q(r,\GF)|}{|T_{\B}|} \ge A$ for all $F$-classes in $C$, then
\begin{align} \label{eqn:maintool2}
\frac{|Q(r,\GF)|}{|\GF|} \ge\frac{A|C|}{|W|}.
\end{align}
\end{lem}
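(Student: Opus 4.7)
The plan is to apply \cite[Theorem 1.3]{NiePra} directly, so the bulk of the work is really just verifying its hypotheses in our setting and then deriving the inequality from the identity.

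First I would verify that $Q(r,\GF)$ is indeed a quokka set. Condition (ii) is immediate, since $r$-regularity depends only on the order of an element, which is conjugacy-invariant. For condition (i), write $g = su = us$ with $s$ semisimple and $u$ unipotent. Since $s$ and $u$ commute, $|g| = \mathrm{lcm}(|s|,|u|)$. The unipotent element $u$ has order a power of the defining characteristic $p$, and by hypothesis $r \ne p$, so $r \mid |g|$ if and only if $r \mid |s|$. Hence $g \in Q(r,\GF)$ if and only if $s \in Q(r,\GF)$, confirming the quokka property.

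Next I would invoke \cite[Theorem 1.3]{NiePra}, which asserts that for any quokka set $Q$ in a finite group of Lie type $\GF$, the proportion $|Q|/|\GF|$ equals the weighted average over $F$-classes $\B$ of the proportion of $Q$-elements in a corresponding maximal torus $T_{\B}$; the weight $|\B|/|W|$ is the proportion of the Weyl group $W$ lying in the $F$-class $\B$. Taking $Q = Q(r,\GF)$ and summing over all $F$-classes of $W$ yields equation \eqref{eqn:maintool1} verbatim.

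For \eqref{eqn:maintool2}, I would simply drop all terms in the sum that do not correspond to $F$-classes contained in $C$; each term is non-negative so this can only decrease the sum. On each retained term, the torus factor $|T_{\B} \cap Q(r,\GF)|/|T_{\B}|$ is at least $A$ by hypothesis, so
\begin{equation*}
\frac{|Q(r,\GF)|}{|\GF|} \;\ge\; \sum_{\B \subseteq C} \frac{|\B|}{|W|} \cdot \frac{|T_{\B} \cap Q(r,\GF)|}{|T_{\B}|} \;\ge\; A \sum_{\B \subseteq C} \frac{|\B|}{|W|} \;=\; \frac{A|C|}{|W|},
\end{equation*}
where the final equality uses that $C$ is a union of $F$-classes and is therefore the disjoint union of those $\B$.

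There is no real obstacle: the lemma is essentially a specialization of the Niemeyer--Praeger quokka theorem to the particular quokka set of $r$-regular elements, packaged with a trivial averaging inequality. The only point requiring any thought is the verification of the quokka condition, which rests entirely on the multiplicative Jordan decomposition together with $r \nmid q$.
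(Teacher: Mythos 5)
Your proof is correct and follows exactly the paper's route: the paper likewise observes that $Q(r,\GF)$ is a quokka set (asserting it is "readily seen", which your Jordan-decomposition argument with $r \neq p$ makes explicit) and then cites \cite[Theorem 1.3]{NiePra} for the identity, with the inequality following by the same trivial truncation of the sum.
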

In order to obtain an estimate for the proportions of $r$-regular elements in $\PSL_n(q)$ and $\PSU_n(q)$ we prove the following lemma, which essentially follows the proof of  \cite[Theorem 1.6]{NiePra}.

\begin{lem} \label{QZ}
Let $G$ be a classical group, $Q = Q(r,G)$, $Z = Z(G)$ and $QZ:=\{xz \,:\, x \in Q, z \in Z\}$. Then the proportion of $r$-regular elements in $G/Z$ is equal to 
\[ p_r(G/Z) = \frac{|QZ|}{|G|} =  \frac{|Q||Z|}{|G||Q \cap Z|}.\]
\end{lem}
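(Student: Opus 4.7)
The plan is to prove the two equalities separately: first the combinatorial identity $|QZ|=|Q||Z|/|Q\cap Z|$, then the group-theoretic identity $p_r(G/Z)=|QZ|/|G|$.

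For the first equality, I would consider the multiplication map $\mu\colon Q\times Z\to G$, $(x,z)\mapsto xz$, whose image is $QZ$. A pair $(x',z')$ lies in the same fibre as $(x,z)$ precisely when $x'=xw$ and $z'=w^{-1}z$ for some $w\in Z$ with $xw\in Q$, so the fibre size is $|\{w\in Z:xw\in Q\}|$. To show this equals $|Q\cap Z|$ independently of $x$, I would use the Jordan decomposition $x=su$. Since $Z$ consists of scalars, $w\in Z$ is central and semisimple, so $xw=(sw)u$ is the Jordan decomposition of $xw$. Thus $xw\in Q$ iff $sw$ is $r$-regular, and since $s$ and $w$ commute (lying in an abelian subgroup), $(sw)_r=s_r w_r$. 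Because $x\in Q$ we have $s_r=1$, so the condition reduces to $w_r=1$, i.e.\ $w\in Z_{r'}$. Observing that $Q\cap Z=Z_{r'}$, this gives fibres of uniform size $|Q\cap Z|$, and hence the desired formula.

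For the second equality, let $\pi\colon G\to G/Z$ denote the natural projection. I would show that $QZ=\pi^{-1}(Q(r,G/Z))$; the result then follows since $|\pi^{-1}(S)|=|Z||S|$ for every $S\subseteq G/Z$. The inclusion $QZ\subseteq\pi^{-1}(Q(r,G/Z))$ is immediate, because for $x\in Q$ and $z\in Z$ the element $\pi(xz)=\pi(x)$ has order dividing $|x|$, which is coprime to $r$. For the reverse inclusion, suppose $\pi(x)$ is $r$-regular in $G/Z$, and let $k$ be its order, so $k$ is coprime to $r$ and $x^k\in Z$. Writing $x=su$, we have $x^k=s^ku^k$ and the uniqueness of Jordan decomposition (together with the fact that elements of $Z$ are semisimple) forces $u^k=1$ and $s^k\in Z$.

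The crux of the argument is then to deduce $s_r\in Z$ from $s^k\in Z$. Since $k$ is coprime to $r$, the $r$-part of $s^k$ equals $s_r^k$, and this $r$-part lies in $Z$ because $Z$ is closed under taking $r$-parts of its elements. As $k$ is invertible modulo $|s_r|$ (a power of $r$), $s_r$ is itself a power of $s_r^k$, hence $s_r\in Z$. Taking $z=s_r^{-1}\in Z$ yields $xz=s_{r'}u$, which is $r$-regular, so $x\in QZ$ as required. The main technical point, which is the only place genuine care is needed, is this passage from $s^k\in Z$ to $s_r\in Z$ via the $r$-part map on an abelian subgroup; all other steps reduce to routine manipulations with the Jordan decomposition and the centrality of $Z$.
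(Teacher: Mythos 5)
Your proof is correct, and its overall architecture matches the paper's: both split the statement into the identity $|QZ|=|Q||Z|/|Q\cap Z|$ (proved by showing the multiplication map $Q\times Z\to QZ$ has fibres of uniform size $|Q\cap Z|$, which is the same as the paper's double count of triples $(z,g,h)$ with $gz=h$) and the identification of $QZ$ with the full preimage of the $r$-regular elements of $G/Z$. The one place you genuinely diverge is the inclusion $\pi^{-1}(Q(r,G/Z))\subseteq QZ$: the paper avoids the Jordan decomposition entirely by writing $1=ar^k+bm$ with $r^k=|x|_r$ and $m=|xZ|$, so that $x=x^{ar^k}\cdot x^{bm}$ with $x^{ar^k}\in Q$ and $x^{bm}\in Z$ — a purely order-theoretic Bezout trick — whereas you pass through $x=su$, use semisimplicity of the scalars in $Z$ to force $s^k\in Z$, and then extract $s_r\in Z$. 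Both work; the paper's is lighter (it needs nothing about $Z$ beyond being central), while yours is a correct if heavier route. Conversely, on the counting side you supply a justification (via the quokka property and $(sw)_r=s_rw_r$) for the claim that $gy\in Q$ iff $y\in Q$ for $g\in Q$, $y\in Z$, which the paper asserts with little comment; that is a worthwhile addition rather than a deviation.
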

\begin{proof} 
  First observe that the set of $r$-regular elements in $G/Z$ is equal to $QZ/Z := \{xZ \,:\, x \in Q\}$. Clearly if $x \in Q$ then $xZ$ is an $r$-regular element in $G/Z$. Conversely, suppose $xZ$ is an $r$-regular element in $G/Z$. Let $m$ be the order of $xZ$ and $r^k=|x|_r$ the $r$-part of the order of $x$. Since $r^k$ and $m$ are coprime, there exist $a,b \in \mathbb{Z}$ such that $1=ar^k+bm$. But then $x =x^{ar^k} x^{bm}$ and $x^{ar^k} \in Q$ and $x^{bm} \in Z$.  So $x Z  = x^{ar^k}Z \in QZ/Z$.
  
 So the proportion of $r$-regular elements in $G/Z$ is equal to 
 \begin{displaymath} 
  \frac{|QZ/Z|}{ |G/Z|} = \frac{ |QZ|}{|G|}.
 \end{displaymath}
 To calculate $|QZ|$, we count the set $A:= \{( z,g,h) \,:\, z \in Z, g \in Q, h \in QZ \mbox{ and } gz=h\}$ in two ways. On the one hand, $|A| = |Z| |Q|$ since each pair $(z,g) \in Z \times Q$ occurs exactly once in $A$ and there are $|Z||Q|$ such pairs. On the other hand, for each $h \in QZ$, there exist at least one $z \in Z$ and $g \in Q$ such that $h=gz$. If $z' \in Z$ and $g' \in Q$ also satisfy $h=g'z'$ then $g^{-1}g' = (z^{\prime})^{-1}z \in Z$. And if we let $y := g^{-1}g' = (z^{\prime})^{-1}z $, then $z'=zy^{-1}$ and $g'  = g y$. Note that for all $y \in  Z$, the element $z'=zy^{-1}$  is in $Z$, but $g'  = g y$ is contained in $Q$ if and only if $y \in Q$. Therefore given $h \in QZ$, the number of pairs $(z,g) \in Z \times Q$ for which $h = gz$ is $|Q \cap Z|$, and thus $|A| = |QZ||Q \cap Z|$. Equating the two expression for $|A|$ gives 
 \begin{displaymath} 
  |QZ| = \frac{ |Q||Z|}{|Q \cap Z| }
 \end{displaymath}
 and the result follows.
  \end{proof}

We now state some number theoretic results, which we will find useful.
\begin{lem}\label{l:nt}
Let $q \ge 2$ be an integer. For all integers $i,j \ge 1$ we have 
\begin{align*} 
(q^i-1,q^j-1) & = q^{(i,j)}-1; \\
(q^i-1,q^j+1) & = \begin{cases}
 q^{(i,j)}+1 & \mbox{if } 2j_2\le i_2;\\
 (2,q-1)   & \mbox{otherwise; } 
\end{cases} \\
(q^i+1,q^j+1) & = \begin{cases}
 q^{(i,j)}+1 & \mbox{if } j_2= i_2;\\
 (2,q-1)   & \mbox{otherwise. }  
\end{cases}
\end{align*}  
\end{lem}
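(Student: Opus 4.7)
The plan is to derive all three identities from the first. For the first, use the Euclidean algorithm on exponents: assuming $i > j$, the relation $q^i - 1 = q^{i-j}(q^j-1) + (q^{i-j}-1)$ gives $(q^i-1, q^j-1) = (q^{i-j}-1, q^j-1)$, and iterating reduces to the trivial case $i = j$.

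For the second identity, set $g = (q^i-1, q^j+1)$. Since $q^j+1 \mid q^{2j}-1$, the first identity gives $g \mid q^{(i,2j)}-1$. A short $2$-adic calculation shows
\[
(i, 2j) = \begin{cases} 2(i, j) & \text{if } 2j_2 \le i_2,\\ (i, j) & \text{otherwise,} \end{cases}
\]
the condition $2j_2 \le i_2$ being equivalent to $v_2(i) > v_2(j)$. In the ``otherwise'' case, setting $Q = q^{(i,j)}$ and $m = j/(i,j)$, $g \mid Q-1$ gives $Q \equiv 1 \pmod{g}$, hence $g \mid Q^m + 1 \equiv 2 \pmod{g}$; combined with $g \mid q^i-1$ this yields $g \mid (2, q-1)$, and the reverse divisibility is immediate. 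In the main case, $i/(i,j)$ is even while $j/(i,j) = m$ is odd, so $q^{(i,j)}+1$ divides both $q^i-1$ and $q^j+1$ and therefore divides $g$; conversely, from $g \mid Q^2-1$ and $g \mid Q^m+1$ with $m$ odd we get $Q^m \equiv Q \pmod{g}$ (using $Q^2 \equiv 1$) and $Q^m \equiv -1$, so $g \mid Q+1 = q^{(i,j)}+1$, forcing equality.

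For the third identity, $g = (q^i+1, q^j+1)$ divides $(q^{2i}-1, q^{2j}-1) = q^{2(i,j)}-1$. If $i_2 = j_2$, both $i/(i,j)$ and $j/(i,j)$ are odd, so $q^{(i,j)}+1$ divides each of $q^i+1$ and $q^j+1$, hence divides $g$; the same ``$Q \equiv -1$'' argument then gives $g = q^{(i,j)}+1$. If $i_2 \neq j_2$, then for any odd prime $p \mid g$, both $q^i \equiv -1$ and $q^j \equiv -1 \pmod{p}$ force $v_2(\operatorname{ord}_p q) = v_2(i)+1 = v_2(j)+1$ (since $\operatorname{ord}_p q$ divides $2i$ and $2j$ but neither $i$ nor $j$), a contradiction; so $g$ is a power of $2$, and a direct computation of $v_2(q^i+1)$ (equal to $v_2(q+1)$ when $i$ is odd, and to $1$ when $i$ is even, for $q$ odd) pins it to $(2, q-1)$. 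The only obstacle throughout is bookkeeping with $2$-adic valuations; the algebraic content beyond the first identity is just the two reductions via $q^a+1 \mid q^{2a}-1$.
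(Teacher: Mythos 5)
The paper does not actually prove Lemma \ref{l:nt}: it is stated as a standard number-theoretic fact with no argument attached (the proof environment that follows belongs to Lemma \ref{lem:2parts}). So there is no proof in the paper to compare against, and your proposal has to stand on its own — which it does. The Euclidean reduction $q^i-1=q^{i-j}(q^j-1)+(q^{i-j}-1)$ correctly yields the first identity; the $2$-adic computation $(i,2j)=2(i,j)$ or $(i,j)$ according as $i_2>j_2$ or not is right, and the two-sided divisibility arguments (using $Q+1\mid Q^m+1$ for $m$ odd in one direction, and $Q^2\equiv 1$, $Q^m\equiv -1 \pmod{g}$ forcing $Q\equiv -1$ in the other) correctly pin down $g$ in the main cases. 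In the degenerate cases your reductions to $g\mid 2$ are also sound: for the second identity $Q\equiv 1$ gives $Q^m+1\equiv 2$, and for the third the order argument rules out odd prime divisors, after which the observation that $i_2\neq j_2$ forces at least one of $i,j$ to be even (so the relevant $2$-adic valuation is $1$ when $q$ is odd, and $g=1$ trivially when $q$ is even) completes the identification with $(2,q-1)$. This is a complete and correct elementary proof of the lemma.
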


\begin{lem} \label{lem:2parts}
Let $q$ and $i$ be positive integers, and let $r$ be a prime not dividing $q$. If $q$ is odd then
 \begin{equation} \label{qi+1}
  (q^i+1)_2 =
  \left\{
               \begin{array}{ll}
                 2 & \hbox{if $i$ is even;} \\
                 (q+1)_2 & \hbox{if $i$ is odd;}
               \end{array}
             \right.
   \end{equation}
  and
 \begin{equation} \label{qi-1}
  (q^i-1)_2 =
  \left\{
               \begin{array}{ll}
                 (q-1)_2    & \hbox{if $i$ is odd;} \\
                 i_2(q-1)_2 & \hbox{if $q \equiv 1 \imod 4$, and $i$ is even;} \\
                 i_2(q+1)_2 & \hbox{if $q \equiv 3 \imod 4$, and $i$ is even.}
               \end{array}
             \right.
\end{equation}
If $r$ is odd then 
\begin{equation} \label{qi-1_r}
  (q^i-1)_r =
  \left\{
               \begin{array}{ll}
                 i_r(q-1)_r    & \hbox{if $r| q-1$;} \\
                 i_r(q+1)_r & \hbox{if $r | q+1$ and $i$ is even;} \\ 
                 1& \hbox{if $r | q+1$ and $i$ is odd.}
               \end{array}
             \right.
\end{equation}
and \begin{equation} \label{qi+1_r}
  (q^i+1)_r =
  \left\{
               \begin{array}{ll}
                 i_r(q+1)_r    & \hbox{if $r| q+1$ and $i$ is odd;} \\
                 1 & \hbox{if $r|q+1$ and $i$ is even;}   \\
                   1 & \hbox{if $r | q-1$.}                \end{array}
             \right.
\end{equation}

\end{lem}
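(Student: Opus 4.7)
The proof is a routine computation built on two classical tools: the elementary factorizations
\[ q^i - 1 = (q-1)\sum_{j=0}^{i-1} q^j \quad\text{and, for $i$ odd,}\quad q^i + 1 = (q+1)\sum_{j=0}^{i-1}(-q)^j, \]
together with the Lifting the Exponent Lemma (LTE). I plan to treat the four displayed equations in turn, following the case structure already in the statement.

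First I would dispose of \eqref{qi+1} and \eqref{qi-1}. When $i$ is odd, the cofactors in the factorizations above are sums of $i$ odd integers and hence odd, yielding $(q^i \pm 1)_2 = (q \pm 1)_2$ respectively. When $i$ is even in \eqref{qi+1}, the key observation is that $q^i$ is the square of an odd integer, hence $q^i \equiv 1 \pmod{8}$, so $(q^i+1)_2 = 2$. For $i = 2^k m$ with $m$ odd and $k \ge 1$ in \eqref{qi-1}, I would iterate the difference-of-squares identity to obtain
\[ q^i - 1 = (q^m - 1)(q^m+1)(q^{2m}+1)\cdots(q^{2^{k-1}m}+1), \]
so that each $(q^{2^j m}+1)_2$ for $j \ge 1$ equals $2$ (even exponent), while $(q^m - 1)_2 = (q-1)_2$ and $(q^m+1)_2 = (q+1)_2$ by the odd-$i$ case. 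Exactly one of $(q-1)_2$ and $(q+1)_2$ equals $2$ depending on the class of $q$ modulo $4$, and multiplying the contributions gives $i_2(q-1)_2$ if $q\equiv 1\pmod 4$ and $i_2(q+1)_2$ if $q\equiv 3\pmod 4$, as required.

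For the odd-$r$ equations \eqref{qi-1_r} and \eqref{qi+1_r}, the engine is LTE: for an odd prime $r$ coprime to $q$, if $r \mid q-1$ then $v_r(q^i - 1) = v_r(q-1) + v_r(i)$, and if $r \mid q+1$ and $i$ is odd then $v_r(q^i + 1) = v_r(q+1) + v_r(i)$. The first statement immediately handles the $r \mid q-1$ row of \eqref{qi-1_r}, and also the $r\mid q-1$ row of \eqref{qi+1_r} since there $q^i + 1 \equiv 2 \pmod{r}$. For $r \mid q+1$, the $i$-odd rows of both equations follow directly from LTE (noting $q^i - 1 \equiv -2 \pmod{r}$ for \eqref{qi-1_r}). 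For the $i$-even case with $r \mid q+1$, I would rewrite $q^i - 1 = (q^2)^{i/2} - 1$ and apply LTE with $r \mid q^2 - 1$, using $v_r(q^2-1) = v_r(q+1)$ (since $r$ odd forces $r \nmid q-1$) and $v_r(i/2) = v_r(i)$ to obtain $(q^i - 1)_r = (q+1)_r\, i_r$; simultaneously $q^i \equiv 1 \pmod{r}$ gives $(q^i + 1)_r = 1$. No step poses a genuine obstacle; the only mild care needed is tracking the parity splits and ensuring the LTE hypotheses hold in each subcase.
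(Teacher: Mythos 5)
Your proof is correct, and in substance it follows the same elementary route as the paper; the differences are mainly in what is proved versus cited. For the $2$-adic statements \eqref{qi+1} and \eqref{qi-1} the paper simply cites \cite[Lemma 2.5]{GP}, whereas you give a self-contained argument via the odd-cofactor factorizations and the iterated difference of squares $q^i-1=(q^m-1)(q^m+1)(q^{2m}+1)\cdots(q^{2^{k-1}m}+1)$; your bookkeeping (exactly one of $(q-1)_2,(q+1)_2$ equals $2$, each higher factor contributing a single $2$) checks out. For the odd-$r$ statements the paper cites \cite[Lemma 2.8]{pabundant} for the identity $(q^{r^j}-1)_r=r^j(q-1)_r$ and then finishes with a cofactor congruence $q^{i_r(t-1)}+\cdots+1\equiv t\pmod r$; this assembles exactly the Lifting-the-Exponent statement you invoke directly, so the engine is the same. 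The two remaining cosmetic differences: you handle the even-$i$, $r\mid q+1$ case by the same substitution $q^i-1=(q^2)^{i/2}-1$ as the paper, and you derive \eqref{qi+1_r} case by case from LTE and congruences, while the paper gets it in one line from $(q^i+1)_r=(q^{2i}-1)_r/(q^i-1)_r$. Either way the verifications are complete; your version buys self-containedness at the cost of assuming LTE as known.
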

\begin{proof}
Equations \eqref{qi+1} and \eqref{qi-1} are proved in \cite[Lemma 2.5]{GP}.
To prove equation \eqref{qi-1_r}, Lemma 2.8 of \cite{pabundant} implies that if $r|q-1$, then for all integers $i$, we have
\[(q^{r^j} -1)_r = r^j (q-1)_r.\]
In particular, for all positive integers $i$ we have $(q^{i_r} -1)_r = i_r(q-1)_r$.
And writing $i=i_rt$, where $(r,t)=1$, we have
\[(q^i-1) =  (q^{i_r} -1)(q^{i_r(t-1)}+ \cdots + q^{i_r} +1). \]
But $q^{i_r(t-1)}+ \cdots + q^{i_r} +1 \equiv t \imod{r}$ and so $(q^i-1)_r=(q^{i_r}-1)_r= i_r(q-1)_r$, which proves the first line of \eqref{qi-1_r}. To prove the second line, suppose $r|q+1$ and $i$ is even. Then $r | q^2-1$ and 
\[(q^i-1)_r  = ((q^2)^{i/2}-1)_r = (i/2)_r(q^2-1)_r \]
by the first line of \eqref{qi-1_r}. Moreover, $(i/2)_r(q^2-1)_r = i_r (q+1)_r$ since $r$ is odd and $r | q+1$. This proves the second line. Now suppose $r | q+1$ and $i$ is odd. In particular, $q \equiv -1 \imod{r}$, and since $i$ is odd, we have $q^i \equiv -1 \imod{r}$;  hence $(q^i-1)_r=1$. This proves the third line of \eqref{qi-1_r}.
Now equation \eqref{qi+1_r} follows from  \eqref{qi-1_r} since the $r$-part of $q^i+1$ is $(q^{2i}-1)_r/(q^i-1)_r$.
\end{proof}

We will also need bounds on the proportion of elements in the symmetric group $S_d$ that have cycles of certain lengths. We define $s_{\neg m}(d)$ to be the proportion of elements in $S_d$ that have no cycles of length divisible by $m$. 
Erd\H{o}s and Tur\'an 
\cite{ET2} proved that when $m$ is a prime power, we have the formula 
\begin{equation} \label{product}
  s_{\neg m}(d) = \prod_{i=1}^{\lfloor{d/m}\rfloor} \left(1- \frac{1}{im}\right).
\end{equation}
In \cite{BLNPS}, it is shown that  this formula also holds when $m$ is not a prime power, and moreover that there exist constants $c_m$ (depending on $m$) such that
\begin{equation} \label{beals}
c_m d^{-1/m}\left (1- \frac{1}{d}\right ) \le s_{\neg m}(d) \le c_m d^{-1/m}\left (1+ \frac{2}{d}\right ). 
 \end{equation}
 Our functions $\goddfn$ and $\gevenfn$ involve the Gamma function $\Gamma$, which is defined for all $z \in \mathbb{C}$, with ${\mathrm Re}\,z >0$, by the equation
\begin{align*} 
\Gamma(z):=\int_0^{\infty} y^{z-1}e^{-y} \, dy.
\end{align*}
Recall that for all positive integers $n$ we have
\begin{align} \label{e:gammafac}
  \Gamma(n+1) & = n!
\end{align}
We denote  the number of permutations in $S_n$ with precisely $k$ cycles by $c(n,k)$. These numbers are known as the unsigned Stirling numbers of the first kind (see section 4 of \cite{GP} for details).
 Using generating function methods (see (4.4) and (4.6) of \cite{GP}), we can prove that 
\begin{equation} \label{e:fr}
\begin{split}
 \sum_{k=1}^n \frac{c(n,k)}{n! 2^k}  &= \frac{(2n)!}{2^{2n}n!^2} = \fr  \hbox{ for $r=2$;} \\
 \sum_{k=1}^n \frac{c(n,k)}{n! 4^k}  &= \frac{\Gamma(n+1/4)}{\Gamma(1/4) \Gamma(n+1)} = \fr \hbox{ for $r$ odd}. 
\end{split}
 \end{equation}

More generally, \cite[(4.3)]{GP} and \cite[6.1.22]{AS} show that for $-1 < x <1$ we have 
\begin{equation} \label{e:genstirlingsum}
  \sum_{k=1}^{n} \frac{c(n,k) x^k}{n!}  = \frac{1}{n!}\prod_{k=0}^{n-1}(x+k)  = \frac{\Gamma(n+x)}{\Gamma(x) \Gamma(n+1)}.
\end{equation}
To analyze the asymptotics of \eqref{e:genstirlingsum}, we shall make use of an inequality due to Ke{\v{c}}ki{\'c} and Vasi{\'c} \cite{KV}. It states that if $y > x \ge 1$ then 
\begin{equation} \label{e:KV}
  \frac{x^{x-1/2}}{y^{y-1/2}} e^{y-x} < \frac{\Gamma(x)}{\Gamma(y)} < \frac{x^{x-1}}{y^{y-1}} e^{y-x}.
\end{equation}   
Applying the inequality \eqref{e:KV} to \eqref{e:genstirlingsum} implies that for a given $x$ in $(-1,1)$ we have
\begin{equation} \label{e:sumcnkbound}
 \sum_{k=1}^{n} \frac{c(n,k) x^k}{n!} \le \frac{(n+x)^{n+x-1} e^{1-x}}{(n+1)^{n} \Gamma(x)} \le \left (1+ \frac{x}{n} \right)^{n} (n+x)^{x-1}e^{1-x} \le C_x n^{x-1}
  \end{equation} 
  for some constant $C_x$.
We also note that equations (4.8), (4.9), (4.10) and (4.11) of \cite{GP} give us 
\begin{equation} \label{e:godd}
  2\sum_{\substack{{k=1}\\{k \text{ odd}}}}^{n} \frac{c(n,k)}{n!2^k(2,r)^k} =   \godd 
\end{equation}
and 
\begin{equation} \label{e:geven}
   2\sum_{\substack{{k=1}\\{k \text{ even}}}}^{n} \frac{c(n,k)}{n!2^k(2,r)^k}  = \geven.
\end{equation}

Moreover, it will  be useful to know which of these functions provides the best lower bound, which is the purpose of the following two lemmas:
\begin{lem} \label{l:npi}
For all integers $ n\ge 2$ we have
   \begin{align} \label{eqn:2ellfactorial}
\frac{1}{\sqrt{\pi n}} \ge \frac{(2 \l)!}{ 2^{2\l} \l !^2} &\ge \frac{25}{29 \sqrt{\pi\l}}.
\end{align}
\end{lem}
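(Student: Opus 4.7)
The plan is to normalize the quantity by defining $R(n) := \frac{(2n)!}{2^{2n}(n!)^2}\sqrt{\pi n}$, so that the claim becomes $25/29 \le R(n) \le 1$ for all $n \ge 2$. We will establish both halves simultaneously by showing that $R$ is strictly increasing on the positive integers with $\lim_{n\to\infty} R(n) = 1$, and by verifying that the explicit value $R(2)$ already exceeds $25/29$.

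The monotonicity step is a direct calculation. One finds
\[
\frac{R(n+1)}{R(n)} = \frac{(2n+1)(2n+2)}{4(n+1)^2}\sqrt{\frac{n+1}{n}} = \frac{2n+1}{2\sqrt{n(n+1)}},
\]
and the identity $(2n+1)^2 = 4n(n+1)+1 > 4n(n+1)$ shows this ratio exceeds $1$. The limit $R(n) \to 1$ is immediate from Stirling's formula $n! \sim \sqrt{2\pi n}(n/e)^n$, which gives $\binom{2n}{n}/4^n \sim 1/\sqrt{\pi n}$. Together, strict monotonicity and the limit force $R(n) < 1$ for every $n \ge 1$, which is the upper half of the lemma.

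For the lower half, monotonicity reduces the claim to verifying $R(2) \ge 25/29$. Direct computation gives $R(2) = \binom{4}{2}/16 \cdot \sqrt{2\pi} = \tfrac{3}{8}\sqrt{2\pi}$, and the desired inequality $\tfrac{3}{8}\sqrt{2\pi} \ge 25/29$ rearranges to $\pi \ge 20000/7569 \approx 2.642$, which is trivial from $\pi > 3$.

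There is no serious obstacle: once one commits to analyzing the single auxiliary function $R$, both bounds fall out of a brief algebraic identity plus the classical asymptotic of the central binomial coefficient. The only care needed is checking that the monotonicity goes in the direction useful for the argument and that the numerical base case $R(2) = \tfrac{3}{8}\sqrt{2\pi} \approx 0.940$ is comfortably above $25/29 \approx 0.862$; both are verified above.
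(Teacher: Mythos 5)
Your proof is correct, and it takes a genuinely different route from the paper. The paper verifies both inequalities by plugging $(2n)!$ and $n!$ into a quantitative form of Stirling's formula with explicit error terms (Michel's bound $\bigl|\,n!/(\sqrt{2\pi n}\,n^n e^{-n}) - 1 - \tfrac{1}{12n}\,\bigr| \le \tfrac{1}{288n^2} + \tfrac{1}{9940n^3}$) and then bounding the resulting ratio of correction factors. You instead normalize to $R(n) = \tfrac{(2n)!}{2^{2n}(n!)^2}\sqrt{\pi n}$, prove strict monotonicity via the clean telescoping identity $R(n+1)/R(n) = \tfrac{2n+1}{2\sqrt{n(n+1)}} > 1$, and then get the upper bound from $R(n) < \lim R = 1$ and the lower bound from the single base case $R(2) = \tfrac{3}{8}\sqrt{2\pi} > 25/29$. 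Your argument is arguably cleaner: it needs only the asymptotic form of Stirling (or equivalently the Wallis product) to identify the limit, with no quantitative error estimates, and it localizes all the numerical work in one trivially checkable inequality $\pi \ge 20000/7569$. What the paper's approach buys in exchange is a uniform template that also yields sharper $n$-dependent constants if desired, but for the stated lemma your method is entirely sufficient and all the computations you give check out.
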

\begin{proof} 
The inequalities are easily verified using  the  Stirling approximation
\[ \left | \frac{n!}{\sqrt{2\pi n} n^n e^{-n}} -1- \frac{1}{12n} \right | \le \frac{1}{288n^2} + \frac{1}{9940n^3},\]
which holds for all integers $n \ge 2$ (see \cite{Michel} for example).
%

 \end{proof}
 \begin{lemma} \label{l:goddineq}
  For all integers $n,m$ satisfying $ n \ge m \ge 2$, we have 
  \begin{align} 
    s_{\neg m}(n) \ge \godd > \fr > \geven.
  \end{align} 
 \end{lemma}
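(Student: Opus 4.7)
The plan is to establish the three inequalities separately. The two rightmost inequalities $\godd > \fr > \geven$ are immediate from the definitions \eqref{e:frfact}--\eqref{e:gevengamma}: when $r$ is odd, $\godd/\fr = 2n/(2n-1) > 1$ and $\geven/\fr = (2n-2)/(2n-1) < 1$; when $r=2$, the equalities $\godd - \fr = \Gamma(n-1/4)/(4\Gamma(3/4)\Gamma(n+1)) = \fr - \geven$ hold, and this common difference is strictly positive for $n\ge 1$ since the Gamma function takes positive values on the positive real axis.

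For the main inequality $s_{\neg m}(n) \ge \godd$, I would first reduce to the case $m=2$. Using the Erd\H{o}s--Tur\'an product formula \eqref{product}, write $s_{\neg m}(n) = \prod_{i=1}^{\lfloor n/m\rfloor}(1 - 1/(im))$. As $m$ increases, each factor $1-1/(im)\in(0,1)$ increases toward $1$ and the number of factors $\lfloor n/m\rfloor$ is non-increasing; since each factor is less than $1$, both effects can only enlarge the product. Hence $s_{\neg m}(n)$ is non-decreasing in $m$ for fixed $n$ with $n\ge m\ge 2$, and it suffices to prove $s_{\neg 2}(n) \ge \godd$.

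To complete this last step, I would first exploit that $\goddfn$ is strictly decreasing in $n$ (a direct calculation gives $\goddfn(n+1)/\godd = (2n-1)/(2n) < 1$ when $r$ is odd, and an analogous gamma-ratio strictly less than $1$ when $r=2$), together with $s_{\neg 2}(2k+1) = s_{\neg 2}(2k)$. This reduces matters to even $n=2k$ with $k\ge 1$. For $r$ odd, the Wallis identity $s_{\neg 2}(2k) = (2k)!/(4^k(k!)^2) = \frfn(k)$ reduces the inequality to $\frfn(k)/\frfn(2k) \ge 4k/(4k-1)$; combining the upper and lower bounds of Lemma \ref{l:npi} yields $\frfn(k)/\frfn(2k) \ge (25/29)\sqrt{2}$, which exceeds $4k/(4k-1) \le 8/7$ for all $k\ge 2$, and the boundary case $k=1$ (i.e.\ $n=2$) produces equality by direct computation. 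For $r=2$, the Ke{\v{c}}ki{\'c}--Vasi{\'c} inequality \eqref{e:KV} applied to each of the two gamma-quotient terms of $\goddfn(n)$ gives a bound of order $n^{-3/4}$, which is dominated by the $n^{-1/2}$-order lower bound on $s_{\neg 2}(n)$ supplied by Lemma \ref{l:npi} for all sufficiently large $n$; the remaining finitely many small values of $n$ are verified by direct numerical computation.

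The main obstacle is the $r$-odd subcase: both sides of $s_{\neg 2}(n) \ge \godd$ are of the same asymptotic order $n^{-1/2}$, so the comparison cannot be done by asymptotics alone and requires careful tracking of the constants furnished by Lemma \ref{l:npi}.
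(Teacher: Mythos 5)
Your proof is correct, but it follows a genuinely different route from the paper's. The paper reduces everything to a single asymptotic comparison: it invokes the lower bound \eqref{beals}, $s_{\neg m}(n)\ge c_m n^{-1/m}(1-1/n)$, together with the external fact $c_m\ge c_2=(\pi/2)^{-1/2}$ (from Remark~2.2 of \cite{GPsn}), and then uses Lemma~\ref{l:npi} to bound $\godd$ above by $\frac{2n}{(2n-1)\sqrt{\pi n}}$; this settles all $n\ge 5$ at once, uniformly in $r$, leaving the cases $4\ge n\ge m\ge 2$ to a direct check via \eqref{product}. You instead obtain the reduction to $m=2$ by the elementary monotonicity of the product \eqref{product} in $m$ (which is valid, and has the advantage of not needing the citation $c_m\ge c_2$), and then, for $r$ odd, you exploit the exact Wallis identity $s_{\neg 2}(2k)=\frfn(k)$ to convert the inequality into the ratio comparison $\frfn(k)/\frfn(2k)\ge 4k/(4k-1)$, which Lemma~\ref{l:npi} handles for $k\ge 2$; this is sharper than the paper's argument in that it identifies $n=2$ as the exact equality case. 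Your $r=2$ case is the least explicit step --- ``sufficiently large $n$'' plus an unspecified finite check --- but the constants do work out (the threshold is tiny), and you could avoid the asymptotics entirely by observing from \eqref{e:godd} that the $r=2$ value of $\godd$ is $2\sum_{k\,\mathrm{odd}}c(n,k)/(n!4^k)\le 2\sum_{k\,\mathrm{odd}}c(n,k)/(n!2^k)$, i.e.\ is at most the $r$-odd value, so that case reduces to the one you have already proved. All the subsidiary computations you assert (the ratio $\goddfn(n+1)/\godd=(2n-1)/(2n)$ for $r$ odd, $s_{\neg 2}(2k+1)=s_{\neg 2}(2k)$, and the symmetry $\godd-\fr=\fr-\geven$ for $r=2$) check out.
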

 \begin{proof} 
  The inequalities  $\godd > \fr$  and $\fr > \geven$ are clear from the definitions of the functions.  To prove that $ s_{\neg m}(n) \ge \godd$, we note that $ s_{\neg m}(n) \ge c_m n^{-1/m}(1-1/n)$ by \eqref{beals}. Moreover, Remark 2.2 of \cite{GPsn} implies that for all $m \ge 2$, we have  $c_m \ge c_2 = (\pi/2)^{-1/2}$. Therefore for all $n,m$  satisfying $n \ge m \ge 2$, we have  
  \[  s_{\neg m}(n) \ge c_2 n^{-1/2}\left (1- \frac{1}{n} \right ) \ge \left (\frac{2}{n\pi}\right )^{1/2} \left (1- \frac{1}{n} \right ). \]
  But Lemma \ref{l:npi} implies that $ \frac{2n}{(2n-1) (n \pi)^{1/2}} \ge \godd$ and it is easy to see that $2(1-n^{-1}) \ge \frac{2n}{(2n-1)}$ for $n \ge 5$. Thus $s_{\neg m}(n) \ge \godd$ for $ n \ge 5$ and it remains to check the cases where $n$ and $m$ satisfy $4 \ge n \ge m \ge 2$. We can do this  easily using \eqref{product}.
   \end{proof}

\section{Linear Case}

Suppose that $G =\SL_d(q)$. Then $W =S_d$ and an $F$-class in $W$ consisting of cycles of lengths $b_1, \ldots , b_m$ corresponds to a $\GF$-class of maximal tori $T$, which are subgroups  of 
\begin{equation*}
 \prod_{i=1}^m (q^{b_i}-1)
\end{equation*}
of index $q-1$. Set $Q=Q(r,G)$.

\begin{lemma} \label{l:psl1}
  Let $m$ be the multiplicative order of $q$ modulo $r$. Then $p_r(\SL_2(q)) \ge \frac{1}{2(q+1)_r}+ \frac{1}{2(q-1)_r}$ and for $d \ge 3$ we have
 \[
 p_r(\SL_d(q)) \ge \begin{cases}
 s_{\neg m}(d)  & \mbox{if }  m \ge 2;\\
   \frac{1}{d d_r} + \frac{1}{(d-1)(d-1)_r (q-1)_r} &  \mbox{if $m =1$ and $r$ is odd;}\\
    \frac{1}{d} + \frac{2}{(d-1)(d-1)_2 (q^2-1)_2}  & \mbox{if $r=2$ and $d$ is odd;}  \\
     \frac{2}{d d_2(q+1)_2} + \frac{1}{(d-1) (q-1)_2}  & \mbox{if $r=2$ and $d$ is even.}    
\end{cases}\]
Moreover, for $G = \PSL_d(q)$, the lower bound $p_r(G) \ge d^{-1}$ in part {\rm (i)} of Theorem \ref{t:main} holds.
  \end{lemma}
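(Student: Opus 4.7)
The plan is to apply Lemma \ref{thm:maintool} to $\SL_d(q)$, for which $W = S_d$ and $F$ acts trivially (so $F$-classes coincide with ordinary conjugacy classes). A cycle type $(b_1, \dots, b_k)$ corresponds to a maximal torus $T$ of index $q-1$ in $\prod_{i} (q^{b_i}-1)$; for the cycle types I shall need ($d$-cycle, $(1, d-1)$-cycle, and the two classes of $S_2$) this torus is cyclic, and hence the proportion of $r$-regular elements in $T$ equals $1/|T|_r$.

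For $m \ge 2$, I would take $C$ to be the union of those conjugacy classes whose cycle lengths are all coprime to $m$. Since $m$ is the multiplicative order of $q$ modulo $r$, we have $r \mid q^{b_i} - 1$ if and only if $m \mid b_i$, so the corresponding torus has order coprime to $r$ and every one of its elements is $r$-regular. Inequality \eqref{eqn:maintool2} with $A = 1$ and $|C|/|W| = s_{\neg m}(d)$ then yields $p_r(\SL_d(q)) \ge s_{\neg m}(d)$.

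For $m = 1$ (i.e.\ $r \mid q-1$) and $d \ge 3$, I would sum the contributions from the $d$-cycle class (proportion $1/d$, cyclic torus of order $(q^d-1)/(q-1)$) and the $(1, d-1)$ class (proportion $1/(d-1)$, cyclic torus of order $q^{d-1}-1$). Using Lemma \ref{lem:2parts} to evaluate the $r$-parts and splitting according to the parities of $r$ and $d$ delivers exactly the three remaining bounds. For $d = 2$, $W = S_2$ has only the classes $(1,1)$ and $(2)$, corresponding to tori of orders $q-1$ and $q+1$; applying \eqref{eqn:maintool1} directly gives $p_r(\SL_2(q)) \ge 1/(2(q-1)_r) + 1/(2(q+1)_r)$.

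To derive the $\PSL_d(q)$ bound in Theorem \ref{t:main}(i), I would apply Lemma \ref{QZ} with $Z = Z(\SL_d(q))$, which is cyclic of order $(d, q-1)$. In a cyclic group of order $N$ the set of $r$-regular elements has size $N/N_r$, so $|Q \cap Z| = |Z|/|Z|_r$ and hence $p_r(\PSL_d(q)) = (d, q-1)_r \cdot p_r(\SL_d(q))$. The inequality $(d, q-1)_r \cdot p_r(\SL_d(q)) \ge 1/d$ then follows by a short case analysis. For $m \ge 2$ the factor $(d, q-1)_r$ is $1$ (as $r \nmid q-1$), and $s_{\neg m}(d) \ge 1/d$ follows for $d \ge 5$ from Lemmas \ref{l:goddineq} and \ref{l:npi} and is checked directly from \eqref{product} for $d \in \{3, 4\}$. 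For $m = 1$, primality of $r$ forces $(d-1)_r = 1$ whenever $r \mid d$, and combined with $(d, q-1)_r = \min(d_r, (q-1)_r)$ one of the two explicit contributions already exceeds $1/d$ in every subcase. The main technical point is the subcase $r = 2$, $d$ even, $m = 1$, $q \equiv 3 \imod{4}$ with $d_2 = 2$, where the $d$-cycle contribution alone is insufficient and one must use the $(1, d-1)$-term, which after multiplication by $(d, q-1)_2 = 2$ yields $1/(d-1) > 1/d$.
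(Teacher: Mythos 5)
Your proposal follows essentially the same route as the paper: the $s_{\neg m}(d)$ bound for $m \ge 2$ via tori whose order is coprime to $r$, the $d$-cycle and $(d-1,1)$-cycle tori for $m = 1$ with Lemma \ref{lem:2parts} computing the $r$-parts, Lemma \ref{QZ} to pass to $\PSL_d(q)$ via the factor $(d,q-1)_r$, and the same case analysis (the paper checks $s_{\neg m}(d)\ge 1/d$ directly from \eqref{beals}, with only $(d,m)=(3,2)$ needing a separate computation, rather than routing through Lemma \ref{l:goddineq}). One small correction: for $m \ge 2$ the set $C$ must consist of the classes with no cycle length \emph{divisible} by $m$, not cycle lengths \emph{coprime} to $m$ --- these differ when $m$ is composite, and only the former satisfies $|C|/|W| = s_{\neg m}(d)$; since you correctly state the criterion $r \mid q^{b_i}-1 \iff m \mid b_i$, this is a wording slip rather than a gap.
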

\begin{proof} 
First suppose that $r$ is odd and $d \ge 3$. Note that if $r | (q^{b_i}-1)$
 then $r | (q^{b_i}-1, q^m-1) = q^{(b_i,m)}-1$ and therefore $m | b_i$. It follows that if $w \in S_d$ has no cycle lengths divisible by $m$, then $r \nmid |T_w|$ and $|T_w \cap Q|/ |T_w| =1$.
 If $m \ge 2$ then using Theorem \ref{thm:maintool} we find that
 \begin{displaymath} 
  p_r(\SL_d(q)) = \frac{|Q|}{|\SL_d(q)|} \ge s_{\neg m}(d) \ge c_m d^{-1/m},
 \end{displaymath}
 where the second inequality follows from \eqref{beals}. Furthermore, the bounds in \eqref{beals} are sufficient to prove that $ s_{\neg m}(d) \ge 1/d$ unless $(d,m)=(3,2)$. But an easy direct calculation verifies that $s_{\neg 2}(3) \ge1/3$. The proportion of $r$-regular elements in $\PSL_d(q)$ when $m \ge 2$ is therefore 
 \[p_r(\PSL_d(q)) =  (d,q-1)_r p_r(\SL_d(q)) \ge d^{-1}\] 
 by Lemma \ref{QZ}.
 
 If $m =1$ then $r |q-1$. Consider the conjugacy classes  of $d$-cycles and $(d-1)$-cycles in $S_d$, which correspond to the $\GF$-classes of cyclic maximal tori in $\SL_d(q)$ of orders $(q^d-1)/(q-1)$ and $q^{d-1}-1$. Since $r |q-1$, the $r$-part of $(q^d-1)/(q-1)$ is $d_r$ by Lemma \ref{lem:2parts}, and the $r$-part of $(q^{d-1}-1)$ is  $(d-1)_r(q-1)_r$. For such tori $T$, we therefore have $|Q \cap T|/|T|  = 1/ d_r$ and $1/ \left((d-1)_r(q-1)_r\right)$ respectively. Since the proportion of $d$-cycles in $S_d$ is $1/d$ and the proportion of $(d-1)$-cycles is $1/(d-1)$, Theorem \ref{thm:maintool} implies 
 \begin{displaymath} 
 p_r(\SL_d(q)) \ge \frac{1}{d d_r} + \frac{1}{(d-1)_r(q-1)_r(d-1)}.
 \end{displaymath}
 Taking the same two classes of tori when $d \ge 3$ and $r=2$ implies that
 \begin{displaymath} 
 p_2(\SL_d(q)) \ge \begin{cases}
  \frac{2}{d d_2(q+1)_2} + \frac{1}{(d-1)(q-1)_2} & \mbox{when $d$ is even;}  \\
   \frac{1}{d} + \frac{2}{(d-1)(d-1)_2(q^2-1)_2}.& \mbox{when $d$ is odd.}
 \end{cases}
 \end{displaymath}
 
 Using Lemma \ref{QZ}, we find that the proportion of $r$-regular elements in $\PSL_d(q)$ is therefore
 \begin{displaymath} 
p_r(\PSL_d(q)) =    (d,q-1)_r p_r(\SL_d(q)).
 \end{displaymath}
 We claim that $p_r(\PSL_d(q)) \ge 1/d$ in all cases. 
  If $r$ is odd and $d_r=(d,q-1)_r$, then this is clear. On the other hand, if $r$ is odd and  $d_r > (d,q-1)_r = (q-1)_r$, then $(d-1)_r=1$ and therefore $p_r(\PSL_d(q)) \ge 1/(d-1)$. If $r=2$ and $d$ is odd, then the result is also clear.   If $r=2$ and $(d,q-1)_2=(q-1)_2$  (so $d$ is even), then it follows that $p_2(\PSL_d(q)) \ge 1/(d-1)$. If $(q-1)_2 > (d,q-1)_2=d_2$ and $d$ is even, then observe that $q \equiv 1 \imod{4}$ and $(q+1)_2=2$. Now the result follows quickly in this case as well. 
  It remains to prove the bounds for $\SL_2(q)$. The tori corresponding to $\{1\}$  in $S_2$ are cyclic of order $q-1$ and the tori corresponding to $\{(12)\}$  are cyclic of order $q+1$. Theorem \ref{thm:maintool} implies 
  $p_r(\SL_2(q)) \ge \frac{1}{2(q-1)_r} + \frac{1}{2(q+1)_r}$ as required. Finally 
  \[ p_r(\PSL_2(q)) = p_r(\SL_2(q)) (2,q-1)_r \ge \frac{1}{2}.\] 
   \end{proof}

\begin{lemma} \label{l:LUB} 
Let $\epsilon>0$, and let $p$ and $r$ be distinct primes. Then there exist infinitely many powers $q$ of $p$ for which $p_r(\PSL_d(q)) < 1/d + \epsilon$. In particular, part \emph{(ii)} of Theorem \ref{t:main} holds when $G=\PSL_d(q)$.
\end{lemma}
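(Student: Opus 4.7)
The plan is to apply Lemma~\ref{thm:maintool} to $G = \SL_d(q)$ and show that, when $(q-1)_r$ is chosen sufficiently large, the sum over $F$-classes of $W = S_d$ is dominated by the contribution of the single class of $d$-cycles, whose contribution after passing to $\PSL_d(q)$ is essentially $1/d$.

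The first step is to construct suitable $q$. Since $p \neq r$, the residue $p$ has finite order $e_k$ in the unit group of $\mathbb{Z}/r^k\mathbb{Z}$ for each $k \geq 1$, so $q := p^{e_k}$ satisfies $r^k \mid q-1$. When $r=2$, taking $k \geq 2$ also ensures $q \equiv 1 \pmod 4$. As $k$ grows, $e_k \to \infty$, producing infinitely many distinct powers of $p$ with $(q-1)_r$ arbitrarily large.

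The second step is to compute $|T_C|_r$ for each cycle type $(b_1,\ldots,b_m) \vdash d$. The torus $T_C \leq \SL_d(q)$ is a subgroup of index $q-1$ in $\prod_{i=1}^m C_{q^{b_i}-1}$, and Lemma~\ref{lem:2parts} gives $(q^{b_i}-1)_r = b_{i,r}(q-1)_r$ for our $q$ (using $q \equiv 1 \pmod 4$ in the $r=2$ case), so
\[
 |T_C|_r \;=\; \frac{\prod_i (q^{b_i}-1)_r}{(q-1)_r} \;=\; (q-1)_r^{m-1} \prod_{i=1}^m b_{i,r}.
\]
For the class $C_0$ of $d$-cycles (so $m=1$, $b_1=d$) this equals $d_r$; for every other class one has $|T_C|_r \geq (q-1)_r$. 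Because $T_C$ is abelian, its proportion of $r$-regular elements is exactly $1/|T_C|_r$, so Lemma~\ref{thm:maintool} (applied as an equality over all $F$-classes) gives
\[
 p_r(\SL_d(q)) \;\leq\; \frac{1}{d\, d_r} \;+\; \sum_{C \neq C_0} \frac{|C|}{|S_d|}\cdot\frac{1}{(q-1)_r} \;\leq\; \frac{1}{d\, d_r} + \frac{1}{(q-1)_r},
\]
using $|C_0|/|S_d|=1/d$ and that the remaining class proportions sum to at most $1$.

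Finally, Lemma~\ref{QZ} gives $p_r(\PSL_d(q)) = (d,q-1)_r\, p_r(\SL_d(q))$, and $(d,q-1)_r = d_r$ whenever $(q-1)_r \geq d_r$, so
\[
 p_r(\PSL_d(q)) \;\leq\; \frac{1}{d} + \frac{d_r}{(q-1)_r}.
\]
Choosing $k$ large enough that $r^k > d_r/\epsilon$ forces the right-hand side below $1/d + \epsilon$, and this happens for infinitely many $q = p^{e_k}$, proving the lemma and establishing part~(ii) of Theorem~\ref{t:main} in the $\PSL$ case. I expect the only real obstacles to be routine bookkeeping: correctly invoking the several sub-cases of Lemma~\ref{lem:2parts} when $r=2$ (neutralised by the choice $q \equiv 1 \pmod 4$) and verifying $(d,q-1)_r = d_r$ in our regime.
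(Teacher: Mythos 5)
Your proposal is correct and follows essentially the same route as the paper: choose $q$ with $(q-1)_r$ large so that every torus except the one corresponding to $d$-cycles has $r$-regular proportion at most $1/(q-1)_r$, isolate the $d$-cycle contribution $1/(d\,d_r)$, and recover the factor $d_r=(d,q-1)_r$ via Lemma~\ref{QZ}. The only (harmless) differences are cosmetic: you compute the exact $r$-part of each torus and use that the $r$-regular proportion of an abelian group is $1/|T|_r$, where the paper simply bounds via a single cyclic factor $q^i-1$, and you produce the infinitely many $q$ as $p^{e_k}$ with $e_k$ the order of $p$ modulo $r^k$ rather than as $p^{jr^b}$.
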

 \begin{proof} 
   Let $\epsilon>0$ and $ d\ge 2$,  let $p$ be a prime and let $r$ be a prime distinct from $p$. Choose a positive integer $a$ such that $1/r^a < \epsilon/d_r$ and $r^a \ge d_r$. Now choose  a positive integer $j$ such that $r^a | p^j-1$. Note that for any positive integer $b$, if we let $q = p^{jr^b}$, then $r^a | q-1$ by Lemma \ref{lem:2parts} and $(d,q-1)_r = d_r$. Now observe that for all classes in $W$, other than the class of $d$-cycles, the corresponding tori in $\SL_d(q)$ all have at least one cyclic factor of order $q^i-1$. By Lemma \ref{lem:2parts}, for any such torus $T$ we have 
 \[|Q \cap T|/|T| \le \frac{1}{(q^i-1)_r }\le \frac{1}{r^{a}} < \frac{\epsilon}{d_r}.\]
 For the cyclic tori $T$ of order $(q^d-1)/(q-1)$ we have $|Q \cap T|/ |T|  = 1/ d_r$. Applying Theorem \ref{thm:maintool}  and this inequality gives
 \[p_r(\SL_d(q)) =  \sum_{\B \subset W}\frac{|\B|}{|W|}.\frac{|T_{\B} \cap Q(r,\GF)|}{|T_{\B}|} \le   \frac{(d-1)\epsilon}{dd_r} + \frac{1}{dd_r} < \frac{1}{d_r} \left(\epsilon+d^{-1}\right).\] 
 Now applying Lemma \ref{QZ} together with the fact that $(d,q-1)_r = d_r$  implies  
 \[p_r(\PSL_d(q)) < \epsilon + d^{-1}\]
 as required. 
  \end{proof}
%
  \begin{lemma} \label{l:pslallq}
    For each odd prime power $q$ and  for all $d \ge 2$ we have
    \[p_2(\PSL_d(q)) \le  \frac{2(d,q-1)_2}{\sqrt{\pi d}}.\]
    In particular Theorem \ref{t:simpleub1} holds for these groups with $C_q= 2(q-1)_2/\sqrt{\pi}$.
  \end{lemma}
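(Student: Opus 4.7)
The plan is to invoke Lemma \ref{QZ} to reduce to bounding $p_2(\SL_d(q))$, then apply the maintool Lemma \ref{thm:maintool} to the quokka set $Q=Q(2,\SL_d(q))$, where $W=S_d$.  For a conjugacy class $C$ of $S_d$ with cycle type $(b_1,\ldots,b_m)$, the corresponding maximal torus $T_C\le \SL_d(q)$ is abelian of order $\prod_i(q^{b_i}-1)/(q-1)$, so the proportion of $2$-regular elements in it is exactly $1/|T_C|_2$.

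The key step is a uniform lower bound on $|T_C|_2$ in terms of the number of cycles $m$. By Lemma \ref{lem:2parts}, for $q$ odd one checks case by case (on the parity of $b_i$ and on $q\bmod 4$) that $(q^{b_i}-1)_2\ge (q-1)_2\ge 2$ for every $i$.  Therefore
\[
|T_C|_2 \;=\; \frac{\prod_i (q^{b_i}-1)_2}{(q-1)_2} \;\ge\; (q-1)_2^{m-1} \;\ge\; 2^{m-1}.
\]

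Substituting this into \eqref{eqn:maintool1} and grouping classes by their number of cycles $m$ (there are $c(d,m)$ permutations with $m$ cycles), we obtain
\[
p_2(\SL_d(q)) \;=\; \sum_{C}\frac{|C|}{d!}\cdot \frac{1}{|T_C|_2} \;\le\; 2\sum_{m=1}^d\frac{c(d,m)}{d!\,2^m} \;=\; \frac{2\,(2d)!}{2^{2d}(d!)^2},
\]
where the last equality is the Stirling-number identity in \eqref{e:fr}.  Lemma \ref{l:npi} then yields $p_2(\SL_d(q)) \le 2/\sqrt{\pi d}$.

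Finally, since $Z(\SL_d(q))$ is cyclic of order $(d,q-1)$ and $Q\cap Z$ is its unique subgroup of odd order, $|Z|/|Q\cap Z|=(d,q-1)_2$, so Lemma \ref{QZ} gives $p_2(\PSL_d(q)) = (d,q-1)_2\,p_2(\SL_d(q)) \le 2(d,q-1)_2/\sqrt{\pi d}$.  Using the trivial bound $(d,q-1)_2\le (q-1)_2$ then gives Theorem \ref{t:simpleub1} with $C_q=2(q-1)_2/\sqrt{\pi}$.  There is no real obstacle here beyond the careful bookkeeping of $2$-parts of torus orders; once the uniform inequality $(q^{b_i}-1)_2\ge 2$ is in hand, the Stirling identity \eqref{e:fr} and Lemma \ref{l:npi} finish the argument immediately.
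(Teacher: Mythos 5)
Your proposal is correct and follows essentially the same route as the paper: reduce to $\SL_d(q)$ via Lemma \ref{QZ}, bound the $2$-part of each maximal torus below by $2^{k-1}$ where $k$ is the number of cycles (the paper phrases this via $k-1$ cyclic factors of even order $q^i-1$, you via $(q^{b_i}-1)_2\ge(q-1)_2$, which amounts to the same thing), and then apply Lemma \ref{thm:maintool}, the Stirling-number identity \eqref{e:fr}, and Lemma \ref{l:npi}. No gaps.
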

  \begin{proof} 
   Note that $p_2(\PSL_d(q)) = (d,q-1)_2 p_2 (\SL_d(q))$ by Lemma \ref{QZ} and set $Q = Q(2,\SL_d(q))$. Now observe that for all classes in $W$ with $k \ge 2$ cycles, the corresponding tori in $\SL_d(q)$ have $k-1$ cyclic factors of order $(q^{i}-1)$ for various $i$. By Lemma \ref{lem:2parts}, for any such torus $T$, we have
   \[ \frac{|Q \cap T|}{|T|} \le \frac{1}{2^{k-1}}.\]
Applying Theorem \ref{thm:maintool}  and this inequality gives
 \[p_2(\PSL_d(q)) \le  (d,q-1)_2 \left( \frac{1}{d} + \sum_{k =2}^{d} \frac{c(d,k)}{d!2^{k-1}}\right ) = 2(d,q-1)_2 \sum_{k =1}^{d} \frac{c(d,k)}{d!2^{k}} \]
 and the required bound follows from  \eqref{e:fr}. 
   \end{proof} 
   
 \begin{remark} 
  \emph{We note that Lemma \ref{l:pslallq} improves \cite[Theorem 6.1]{BPS}, which states that for $q \equiv 3	\pmod{4}$ such that $(d,q-1) \le 2$, the proportion of elements of odd order is  $p_2(\PSL_d(q)) \le 4/q+ 4/\sqrt{\pi d}$. }
 \end{remark}
 
 \section{Unitary case}
 
Suppose that $G =\SU_d(q)$, and since $\SU_2(q) \cong \SL_2(q)$, we may assume $d \ge 3$. Now $W =S_d$ and an $F$-class in $W$ consisting of cycles of length $b_1, \ldots , b_k$ corresponds to a $\GF$-class of maximal tori $T$, which are subgroups of 
\begin{equation}
 \prod_{i=1}^m (q^{b_i}-(-1)^{b_i})
\end{equation}
of index $q+1$. Again set $Q= Q(r,G)$.

\begin{lemma}  \label{l:psu1}
Let $m'$ be the multiplicative order of $q$ modulo $r$. Define 
  \[
m = \begin{cases}
1& \mbox{if $r=2$;}\\
 2m' & \mbox{if $m'$ is odd and $r \ne 2$;} \\
  m'  & \mbox{if $m' \equiv 0 \imod{4}$ and $r \ne 2$;}\\
  m'/2  & \mbox{if $m' \equiv 2 \imod{4}$ and $r \ne 2$.}
\end{cases}\]
  Then 
 \begin{align}  \label{e:su}
 p_r(\SU_d(q)) \ge \begin{cases}
 s_{\neg m}(d) \ge c(m)d^{-1/m} & \mbox{if } m \ge 2;\\ 
   \frac{1}{d d_r} + \frac{1}{(d-1)(d-1)_r (q+1)_r}  & \mbox{for $m=1$ and $r$ odd;} \\
    \frac{1}{d} + \frac{2}{(d-1)(d-1)_2 (q^2-1)_2}  & \mbox{for $r=2$ and $d$ odd;} \\
     \frac{2}{d d_2(q-1)_2} + \frac{1}{(d-1)(q+1)_2}  & \mbox{for $r=2$ and $d$ even.}  
\end{cases}
 \end{align}
and for $G = \PSU_d(q)$ the lower bound $p_r(G) \ge 1/d$ in part {\rm (i)} of Theorem \ref{t:main} holds.
  \end{lemma}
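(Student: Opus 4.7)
The plan is to mirror the proof of Lemma \ref{l:psl1}, exploiting the analogous parametrization of maximal tori in $\SU_d(q)$ by $F$-classes in $W = S_d$, but replacing the factor $q^{b_i} - 1$ with $q^{b_i} - (-1)^{b_i}$. The first step is to determine $m$, the smallest positive integer such that $r \mid q^m - (-1)^m$; the resulting case analysis produces exactly the four-case definition of $m$ given in the statement. For $r$ odd one uses that $r \mid q^b - 1$ iff $m' \mid b$, and that $r \mid q^b + 1$ iff $m'$ is even and $b \equiv m'/2 \pmod{m'}$; combining these with the parity of $b$ gives $m = 2m'$ when $m'$ is odd, $m = m'/2$ when $m' \equiv 2 \pmod{4}$, and $m = m'$ when $m' \equiv 0 \pmod{4}$. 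For $r = 2$ every factor $q^{b_i} - (-1)^{b_i}$ is even, so $m = 1$ and a finer argument via specific cyclic tori is needed.

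For the case $m \ge 2$, any $w \in S_d$ with no cycle of length divisible by $m$ has $r \nmid q^{b_i} - (-1)^{b_i}$ for every cycle length $b_i$ of $w$, hence $r \nmid |T_w|$ and $|T_w \cap Q|/|T_w| = 1$. Summing over the union of such $F$-classes via Lemma \ref{thm:maintool} with $A = 1$ yields $p_r(\SU_d(q)) \ge s_{\neg m}(d)$, and then $s_{\neg m}(d) \ge c_m d^{-1/m}$ follows immediately from \eqref{beals}. The additional bound $s_{\neg m}(d) \ge 1/d$ goes through as in the linear case, with the exceptional pair $(d,m) = (3,2)$ handled by direct computation using \eqref{product}.

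For the case $m = 1$, I would single out the two $F$-classes corresponding to $d$-cycles and to $(d-1)$-cycles. A $d$-cycle gives a cyclic torus of order $(q^d - (-1)^d)/(q+1)$, while a $(d-1)$-cycle gives a torus with cyclic factors of orders $q^{d-1} - (-1)^{d-1}$ and something coprime to $r$; the $r$-parts of these orders are then read off from Lemma \ref{lem:2parts}, distinguishing the sub-cases of odd $r$, of $r = 2$ with $d$ odd, and of $r = 2$ with $d$ even. The computation is formally identical to that for $\SL_d(q)$ after interchanging the roles of $(q-1)_r$ and $(q+1)_r$ where appropriate, since $q^d - (-1)^d$ equals $q^d - 1$ when $d$ is even and $q^d + 1$ when $d$ is odd.

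To pass to $\PSU_d(q)$, apply Lemma \ref{QZ}, yielding $p_r(\PSU_d(q)) = (d, q+1)_r \cdot p_r(\SU_d(q))$, and then verify the bound $p_r(\PSU_d(q)) \ge 1/d$ by the same dichotomy as in the linear proof: either $(d, q+1)_r = d_r$ and the contribution of the $d$-cycle class already gives $1/d$, or $(d, q+1)_r < d_r$, which forces $(d-1)_r = 1$ and the $(d-1)$-cycle class suffices. The main obstacle is the bookkeeping across the four sub-cases for $m$, correctly identifying which clause of Lemma \ref{lem:2parts} governs each $r$-part computation, and patching the small-dimensional cases where the generic $s_{\neg m}(d)$ bound does not automatically exceed $1/d$.
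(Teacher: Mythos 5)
Your proposal is correct and follows essentially the same route as the paper: identify $m$ as the least positive integer with $r\mid q^m-(-1)^m$, use $s_{\neg m}(d)$ via Lemma \ref{thm:maintool} when $m\ge 2$, use the cyclic tori attached to $d$-cycles and $(d-1)$-cycles together with Lemma \ref{lem:2parts} when $m=1$ or $r=2$, and pass to $\PSU_d(q)$ via Lemma \ref{QZ} with the same dichotomy on $(d,q+1)_r$ as in the linear case.
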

\begin{proof} 
  First we show that $m$ is the smallest positive integer for which $r | q^m-(-1)^{m}$. If $r=2$ then $r|q+1$ and $m=1$ so the claim is true. Suppose now that $r$ is odd. Note that $r| q^{i}-1$ if and only if $m' | i$. If $m'$ is odd then $r\nmid q^{i}+1$ for any $i$ so we seek the least even $i$ such that $r | q^{i}-1$; the least such $i$ is $2m'=m$. So suppose that $m'$ is even. Then $r | q^{m'/2}+1$ and $m'/2$ is the least integer $i$ such that $r | q^{i}+1$. If $m' \equiv 2 \imod{4}$ then $m=m'/2$ is odd, and hence $q^{m'/2}+1=q^{m}-(-1)^{m}$ and the claim is proved. This leaves $m' \equiv 0\imod{4}$ in which case $m=m'$, and there is no odd $i$ such that $r | q^{i}+1$. So the claim holds in this case also.

\par
 First suppose that $r$ is odd. If $m$ is even, and $r | (q^{b_i}-1)$ (with $b_i$ even)
 then $r$ divides $(q^{b_i}-1, q^m-1)$, which equals $q^{(b_i,m)}-1$ by Lemma \ref{l:nt}, and therefore $m | b_i$.  If $m$ is even, and $r | (q^{b_i}+1)$ (with $b_i$ odd),
 then $r$ divides $(q^{b_i}+1, q^m-1)$, which equals $q^{(b_i,m)}+1$ by Lemma \ref{l:nt} and therefore $m | b_i$. 
  It follows that if $m$ is even and $w \in S_n$ has no cycle of length divisible by $m$, then $r \nmid |T_w|$ and $|T_w \cap Q|/ |T_w| =1$. 
  
   Similarly, if $m \ge 3$ is odd, and $r | (q^{b_i}-1)$ (with $b_i$ even)
 then $r$ divides $(q^{b_i}-1, q^m+1)$, which equals $q^{(b_i,m)}+1$ and therefore $m | b_i$.  If $m$ is odd, and $r | (q^{b_i}+1)$ (with $b_i$ odd),
 then $r$ divides $(q^{b_i}+1, q^m+1)$, which equals $q^{(b_i,m)}+1$ by Lemma \ref{l:nt}, and therefore $m | b_i$. 
  It follows that if $m$ is odd and $m \ge 3$, and if $w \in S_n$ has no cycles of length divisible by $m$, then $r \nmid |T_w|$ and $|T_w \cap Q|/ |T_w| =1$. 
  
 Using Theorem \ref{thm:maintool} we find that
 \begin{displaymath} 
p_r(\SU_d(q))=  \frac{|Q|}{|\SU_d(q)|} \ge s_{\neg m}(d).
 \end{displaymath}
 As in the linear case, we can verify that $ s_{\neg m}(d) \ge 1/d$ in all cases. The proportion of $r$-regular elements in $\PSU_d(q)$  when $r$ is odd and $m\ge 2$  is therefore 
 \[p_r(\PSU_d(q)) \ge  (d,q+1)_r s_{\neg m}(d)  \ge d^{-1}\] 
  by Lemma \ref{QZ}.
  
  Now suppose $m=1$ so that $r |q+1$. As in the linear case, we consider the conjugacy classes of $d$-cycles and $(d-1)$-cycles in $S_d$, which correspond to the classes of cyclic maximal tori in $\SU_d(q)$ of orders $(q^d-(-1)^d)/(q+1)$ and $(q^{d-1}-(-1)^{d-1})$ respectively. Since $r |q+1$, when $r$ and $d$ are odd,  the $r$-part of $(q^d+1)/(q+1)$ is $d_r$ by Lemma \ref{lem:2parts}. Similarly if $r$ is odd and $d$ is even, then the $r$-part of $(q^d-1)/(q+1)$ is also $d_r$. And if $r$ is odd, then the $r$-part of $(q^{d-1}-(-1)^{d-1})$ is $(d-1)_r(q+1)_r$. Applying Theorem \ref{thm:maintool} proves the lemma in the case $m=1$ and $r$ odd. The cases with  $r=2$ follow from entirely similar applications of Lemma \ref{lem:2parts} and Theorem \ref{thm:maintool} and  we omit the details.  Thus, the inequality \eqref{e:su} is proved.
  
  By Lemma \ref{QZ}, we have $p_r(\PSU_d(q))  = (d,q+1)_rp_r(\SU_d(q))$ and the inequality $p_r(\PSU_d(q)) \ge d^{-1}$ follows by the same argument as in the linear case. 
    \end{proof}

\begin{lemma} \label{l:UUB}
  Let $\epsilon>0$, and let $p$ and $r$ be distinct primes such that the multiplicative order of $p$ modulo $r$ is even. Then there exist infinitely many prime powers $q$ of $p$ for which $p_r(\PSU_d(q)) < 1/d + \epsilon$. In particular, 
  part \emph{(ii)} of Theorem \ref{t:main} holds when $G=\PSU_d(q)$.
\end{lemma}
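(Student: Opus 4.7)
The plan is to mirror the strategy of Lemma \ref{l:LUB} from the linear case. We want to choose $q$ so that $r\mid q+1$ and $(q+1)_r$ is very large; then the cyclic torus corresponding to the $d$-cycle in $W=S_d$ carries essentially all of the contribution to $p_r(\SU_d(q))$, while every other torus contributes a negligible amount. Note first that the hypothesis forces $r$ to be odd, since for $r=2$ any odd prime $p$ satisfies $p\equiv 1\imod 2$, so the multiplicative order of $p$ modulo $2$ is $1$, which is odd.

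Write the multiplicative order of $p$ modulo $r$ as $2k$. Then $r\mid p^{2k}-1=(p^k-1)(p^k+1)$ and $r\nmid p^k-1$ (as $k<2k$), so $r\mid p^k+1$. Set $q_0=p^k$; then $r\mid q_0+1$. For each integer $b\ge 0$ define $q=q_0^{r^b}$. Since $r^b$ is odd, the third line of \eqref{qi+1_r} in Lemma \ref{lem:2parts} (applied with exponent $r^b$) gives $(q+1)_r=r^b(q_0+1)_r$, which can be made arbitrarily large. In particular, for $b$ large we may arrange $(q+1)_r\ge d_r$ and $d_r/(q+1)_r<\epsilon$; the former forces $(d,q+1)_r=d_r$.

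Next I would compute $r$-parts of maximal torus orders. A class $\B$ in $W=S_d$ with cycle lengths $b_1,\dots,b_k$ corresponds to a maximal torus $T_\B$ in $\SU_d(q)$ of order $\frac{1}{q+1}\prod_{i=1}^{k}(q^{b_i}-(-1)^{b_i})$. Because $r\mid q+1$, Lemma \ref{lem:2parts} gives $(q^{b_i}-(-1)^{b_i})_r=(b_i)_r(q+1)_r$ regardless of the parity of $b_i$, so
\[
|T_\B|_r=\frac{(q+1)_r^{k}}{(q+1)_r}\prod_{i=1}^{k}(b_i)_r=(q+1)_r^{\,k-1}\prod_{i=1}^{k}(b_i)_r.
\]
For the single $d$-cycle class ($k=1$) this yields $|T_\B|_r=d_r$ and hence $|T_\B\cap Q|/|T_\B|=1/d_r$; for every class with $k\ge 2$ we obtain $|T_\B|_r\ge(q+1)_r$, so $|T_\B\cap Q|/|T_\B|\le 1/(q+1)_r$.

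Feeding these bounds into Lemma \ref{thm:maintool}, splitting the sum according to whether $\B$ is the class of $d$-cycles (with proportion $1/d$ in $S_d$) or not, gives
\[
p_r(\SU_d(q))\le\frac{1}{d}\cdot\frac{1}{d_r}+\left(1-\frac{1}{d}\right)\cdot\frac{1}{(q+1)_r}\le\frac{1}{dd_r}+\frac{1}{(q+1)_r}.
\]
Multiplying by $(d,q+1)_r=d_r$ via Lemma \ref{QZ} yields
\[
p_r(\PSU_d(q))\le\frac{1}{d}+\frac{d_r}{(q+1)_r}<\frac{1}{d}+\epsilon,
\]
as required, and since $b$ can be chosen arbitrarily large we obtain infinitely many such $q$. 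The only real subtlety is the careful accounting of $r$-parts of factors $q^{b_i}-(-1)^{b_i}$ in the unitary case; this is what forces the parity hypothesis on the multiplicative order, and is the only step where something substantive must be checked.
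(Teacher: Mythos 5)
Your argument is correct and follows essentially the same route as the paper's proof: choose $q$ a power of $p$ with $r\mid q+1$ and $(q+1)_r$ arbitrarily large, observe that the torus attached to the class of $d$-cycles has $r$-part exactly $d_r$ while every other torus has $r$-part at least $(q+1)_r$, and combine Lemma \ref{thm:maintool} with Lemma \ref{QZ}. Your exact formula $|T_{\B}|_r=(q+1)_r^{\,k-1}\prod_i (b_i)_r$ is a slightly sharper version of what the paper uses (it only needs $(q^{b_i}-(-1)^{b_i})_r\ge r^a$ for the classes with $k\ge 2$ cycles), and your choice $q=p^{kr^b}$ is the paper's $q=p^{jr^b}$. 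Two small points. The case of Lemma \ref{lem:2parts} you need for $(q_0^{r^b}+1)_r$ is the \emph{first} line of \eqref{qi+1_r} (namely $r\mid q_0+1$ with odd exponent), not the third. More substantively, you set aside $r=2$ on the grounds that the hypothesis is vacuous there; that is a legitimate reading of the lemma as stated, but the ``in particular'' clause claims part (ii) of Theorem \ref{t:main} for $\PSU_d(q)$, which is asserted for every prime $r$ not dividing $q$, including $r=2$. The paper's proof deliberately covers $r=2$ by taking $q=p^{j3^b}$ with $2^a\mid p^j+1$; since $(p^j+1)_2\le\max\bigl(2,(p+1)_2\bigr)$, this in fact forces one to choose $p$ with $(p+1)_2$ large, which is possible for infinitely many primes $p$ by Dirichlet and is all that part (ii) requires. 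If you want the ``in particular'' conclusion in full, you should add this case.
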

 \begin{proof} 
   Let $\epsilon>0$ and $d\ge 2$, and let $r$ be a prime. As in the linear case, choose a positive integer $a$ such that $1/r^a < \epsilon/d_r$ and $r^a \ge d_r$. Since the multiplicative order of $p$ modulo $r$ is even, there exists a positive integer $j'$ such that $r | p^{j'}+1$. In the light of Lemma \ref{lem:2parts}, there exists a positive integer $j$ such that $r^a | p^j+1$. Now for any positive integer $b$, if we let $q = p^{jr^b}$ for $r$ odd and $q=p^{j3^b}$ for $r=2$, then $r^a | q+1$ by Lemma \ref{lem:2parts} and $(d,q+1)_r = d_r$. We observe that for all classes in $W$, other than the class of $d$-cycles, the corresponding tori in $\SU_d(q)$ all have at least one cyclic factor of order $q^i-(-1)^{i}$. By Lemma \ref{lem:2parts}, for any such torus $T$ we have 
 \[ \frac{|Q \cap T|}{|T|} \le  \frac{1}{(q^i-(-1)^{i})_r} \le \frac{1}{r^{a}} < \frac{\epsilon}{d_r}.\]
 Similarly, for a cyclic torus $T$ of order $(q^d-(-1)^{d})/(q+1)$ we have $|Q \cap T|/ |T|  = 1/ d_r$ by Lemma \ref{lem:2parts} (note that if $r=2$, then $q \equiv 3 \imod{4}$). Applying Theorem \ref{thm:maintool}  and this inequality gives
 \[ p_r(\SU_d(q))  =  \sum_{B \subset W}\frac{|B|}{|W|}.\frac{|T_B \cap Q(r,\GF)|}{|T_B|} \le  \frac{(d-1)\epsilon}{dd_r} + \frac{1}{dd_r}.\] 
 Now applying Lemma \ref{QZ} together with the fact that $(d,q+1)_r = d_r$  implies  
 \[p_r(\PSU_d(q)) < \epsilon + d^{-1}\]
 as required. 
  \end{proof}
 \begin{lemma} \label{l:psuallq}
    For each prime power $q$ and for all $d \ge 2$ we have 
    \[p_2(\PSU_d(q)) \le  \frac{2(d,q+1)_2}{\sqrt{\pi d}}.\]
    In particular Theorem \ref{t:simpleub1} holds for these groups with $C_q = 2(q+1)_2/\sqrt{\pi}$.
  \end{lemma}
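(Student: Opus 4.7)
The approach mirrors Lemma \ref{l:pslallq} exactly: $W = S_d$ in both the linear and unitary cases, and the only change is that the cyclic factors of maximal tori now have orders $q^{b_i} - (-1)^{b_i}$ rather than $q^{b_i}-1$. First, by Lemma \ref{QZ}, $p_2(\PSU_d(q)) = (d, q+1)_2 \, p_2(\SU_d(q))$, so it suffices to bound $p_2(\SU_d(q))$. Note that $q$ is odd since $r = 2 \nmid q$, and set $Q = Q(2, \SU_d(q))$.

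For the $F$-class of $d$-cycles ($k=1$) I use the trivial bound $|T \cap Q|/|T| \le 1$, which contributes $1/d$ via Lemma \ref{thm:maintool}. For an $F$-class with $k \ge 2$ cycles of lengths $b_1, \ldots, b_k$, the corresponding torus $T$ is a subgroup of index $q+1$ of $\prod_{i=1}^{k} C_{q^{b_i} - (-1)^{b_i}}$. The key $2$-part estimate is that $(q^{b_i} - (-1)^{b_i})_2 \ge (q+1)_2$ for every $i$: when $b_i$ is odd, $(q^{b_i}+1)_2 = (q+1)_2$ by \eqref{qi+1}, and when $b_i$ is even, a case split on $q \bmod 4$ in \eqref{qi-1} shows $(q^{b_i}-1)_2 \ge (q+1)_2$. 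Dividing out the $(q+1)$ then gives $|T|_2 \ge (q+1)_2^{k-1} \ge 2^{k-1}$, so $|T \cap Q|/|T| \le 1/2^{k-1}$. Summing via Lemma \ref{thm:maintool} yields
\[ p_2(\SU_d(q)) \le \frac{1}{d} + \sum_{k=2}^{d} \frac{c(d,k)}{d! \, 2^{k-1}} = 2 \sum_{k=1}^{d} \frac{c(d,k)}{d! \, 2^{k}}. \]

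Finally, the last sum equals $f_2(d) = (2d)!/(2^{2d} d!^{2})$ by \eqref{e:fr}, and Lemma \ref{l:npi} gives $f_2(d) \le 1/\sqrt{\pi d}$. Therefore $p_2(\SU_d(q)) \le 2/\sqrt{\pi d}$, and multiplying by $(d, q+1)_2$ proves the stated inequality. Using $(d, q+1)_2 \le (q+1)_2$ then recovers the constant $C_q = 2(q+1)_2/\sqrt{\pi}$ for Theorem \ref{t:simpleub1}(a). There is no serious obstacle: the argument is essentially a line-for-line analog of the linear case, with the only subtle point being the $2$-part inequality $(q^{b_i} - (-1)^{b_i})_2 \ge (q+1)_2$, which is settled by a short direct appeal to Lemma \ref{lem:2parts}.
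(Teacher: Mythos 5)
Your proof is correct and follows essentially the same route as the paper: reduce to $\SU_d(q)$ via Lemma \ref{QZ}, bound $|Q\cap T|/|T|$ by $1/2^{k-1}$ for tori with $k\ge 2$ cyclic factors, and sum via Lemma \ref{thm:maintool} and \eqref{e:fr}. Your explicit justification of the $2$-part bound $|T|_2\ge (q+1)_2^{k-1}\ge 2^{k-1}$ (accounting for the index-$(q+1)$ subgroup) is a correct and slightly more careful rendering of the step the paper states tersely via Lemma \ref{lem:2parts}.
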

  \begin{proof} 
   Note that $p_2(\PSU_d(q)) = (d,q+1)_2 p_2 (\SU_d(q))$ by Lemma \ref{QZ} and set $Q = Q(2,\SU_d(q))$. Now observe that for all classes in $W$ with $k \ge 2$ cycles, the corresponding tori in $\SU_d(q)$ have $k-1$ cyclic factors of order $(q^{i}-(-1)^{i})$ for various $i$. By Lemma \ref{lem:2parts}, for any such torus $T$, we have
   \[ \frac{|Q \cap T|}{|T|} \le \frac{1}{2^{k-1}}.\]
Applying Theorem \ref{thm:maintool}  and the inequality above gives
 \[p_2(\PSU_d(q)) \le  (d,q+1)_2 \left( \frac{1}{d} + \sum_{k =2}^{d} \frac{c(d,k)}{d!2^{k-1}}\right ) \le 2(d,q+1)_2 \sum_{k =1}^{d} \frac{c(d,k)}{d!2^{k}} \]
 and the required bound follows from \eqref{e:fr}.
  \end{proof} 

\section{Symplectic and odd dimensional orthogonal cases}

Suppose that $\GF = \mathrm{Sp}_{2\l}(q)$. Then $W=C_2 \wr S_{\l} \le S_{2 \l}$, and a partition
\[\beta = (\beta^+, \beta^-)=(b_1^+,\ldots, b_{k_1}^+, b_1^-, \ldots, b_{k_2}^-)\]
 of $\l$, representing an $F$-class in $W$, corresponds to a $\GF$-class of maximal tori $T$ of the form
\begin{equation} \label{e:TcaseS}
T= \prod_{i=1}^{k_1} (q^{b_i^+}-1) \times \prod_{j=1}^{k_2} (q^{b_j^-}+1).
\end{equation}
For $w \in W=C_2 \wr S_{\l}$, write $\sigma(w)$ for the natural image of $w$ in $S_{\l}$. The partition $\beta$, without the signs, represents the cycle structure of $\sigma(w)$ in $S_{\l}$; the $+$ sign indicates that the $b_1^{+}$ cycle in $S_{\l}$ is the image of two cycles of length $b_1$ in $W\le S_{2\l}$, and these will be referred to as \textit{positive cycles}. The $-$ sign indicates that the $b_1^{-}$ cycle in $S_{\l}$ is the image of one cycle of length $2b_1$ in $W\le S_{2\l}$, and  these will be referred to as \textit{negative cycles}. Recall the definition of $\frfn$ in  \eqref{e:frfact} and that, by \eqref{e:fr}, we have
\[ \fr =  \sum_{k=1}^n \frac{c(n,k)}{n! 2^k(2,r)^k}. \]
 If $\GF = \SO_{2\l+1}(q)$, then the Weyl group $W$ and the correspondence between classes in $W$ and $\GF$-classes of maximal tori are exactly the same as in the case $\GF=\Sp_{2\l}(q)$.

\begin{lemma}  \label{l:Sp}
 Let $G = \Sp_{2n}(q)$, $\SO_{2n+1}(q)$ or $\Om_{2n+1}(q)$ where $ n\ge 2$.  If $r$ is a prime not dividing $q$ and $m$ is the multiplicative order of $q$ modulo $r$, then 
 \[ p_r(G) \ge 
 \begin{cases}
 s_{\neg m}(n)  & \hbox{if $m$ is odd and $m \ge 3$;} \\ 
  s_{\neg \frac{m}{2}}(n)   & \hbox{if $m$ is even and $m \ge 4$;} \\
  \fr & \hbox{otherwise.}  \\
      \end{cases}
     \]
 Also, $p_2(\Om_{2n+1}(q)) = 2p_2(\SO_{2n+1}(q))$ and $p_r(\PSp_{2n}(q)) = (2,r)p_r(\Sp_{2n}(q))$, and parts \emph{(i)} and \emph{(iii)} of Theorem \ref{t:main} hold for these groups. 
 \end{lemma}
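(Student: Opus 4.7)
The plan is to apply Lemma~\ref{thm:maintool} to $\GF = \Sp_{2n}(q)$ with quokka set $Q = Q(r,\GF)$, then observe that the identical argument works for $\SO_{2n+1}(q)$. The Weyl group $W = C_2 \wr S_n$ has $F$-classes parametrized by signed partitions $\beta = (\beta^+,\beta^-)$ of $n$, with associated torus $T_\beta$ a product of cyclic factors of orders $q^{b_i^+}-1$ and $q^{b_j^-}+1$; since $T_\beta$ is abelian, $|T_\beta \cap Q|/|T_\beta| = 1/|T_\beta|_r$. For each of the three subcases I aim to select a subset $C$ of $F$-classes that covers a large fraction of $W$ and whose tori have controlled $r$-part. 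The number-theoretic input (Lemmas~\ref{l:nt} and~\ref{lem:2parts}) dictates exactly which factors $q^b \pm 1$ are divisible by $r$.

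For $m \ge 3$ odd, Lemma~\ref{l:nt} gives that $r \mid q^b-1$ forces $m \mid b$, while $r \nmid q^b+1$ for any $b$ (since $m$ is odd). So I take $C$ to be those $F$-classes whose unsigned partition has no part divisible by $m$; then $|T_\beta|_r = 1$ for every $\beta \in C$, and because the condition depends only on $\sigma(w) \in S_n$, the fraction $|C|/|W|$ equals $s_{\neg m}(n)$, yielding the claimed bound via~\eqref{eqn:maintool2}. For $m = 2m' \ge 4$ even, $r \mid q^b+1$ requires $m' \mid b$ with $b/m'$ odd, and $r \mid q^b-1$ requires $m \mid b$; so the same construction with ``$m$'' replaced by ``$m'$'' works and gives proportion $s_{\neg m'}(n)$.

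For the ``otherwise'' case ($m \le 2$), I would take $C$ to consist of $F$-classes with all cycles \emph{negative}, so that $T_\beta = \prod_j (q^{b_j}+1)$. For $r$ odd with $m=1$ (so $q \equiv 1 \pmod r$) every factor is $\equiv 2 \pmod r$, hence coprime to $r$; and the proportion of elements of $W$ above a given $\sigma \in S_n$ with $k$ cycles whose cycle signs are all negative is $2^{n-k}/2^n = 2^{-k}$, yielding $|C|/|W| = \sum_k c(n,k)/(n!\,2^k) = \fr$ by \eqref{e:fr}. For $r$ odd with $m=2$ (so $q \equiv -1 \pmod r$), I refine the choice to $F$-classes with positive cycles of odd length and negative cycles of even length; this again determines a unique sign per cycle and gives the same proportion $\fr$. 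For $r=2$ no torus avoids $r$ entirely, so one uses $|T_\beta \cap Q|/|T_\beta| = 1/|T_\beta|_2$ together with the explicit values of $(q^b+1)_2$ from Lemma~\ref{lem:2parts} to obtain an average per-cycle factor of $2^{-k}$; combined with the all-negative weight $2^{-k}$ this produces $\sum_k c(n,k)/(n!\,4^k) = \fr$.

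The statement for $\SO_{2n+1}(q)$ follows from the identical Weyl group and torus structure. For $\Om_{2n+1}(q) \le \SO_{2n+1}(q)$ of index $2$ in odd characteristic, every $2$-regular element of $\SO_{2n+1}(q)$ has odd order and hence automatically lies in the index-$2$ subgroup $\Om_{2n+1}(q)$; consequently $p_2(\Om_{2n+1}(q)) = 2\,p_2(\SO_{2n+1}(q))$, which supplies the $(2,r)$ factor in Table~\ref{tab:main} and, together with the $\SO$-bound, proves part~(i) for $\Om_{2n+1}$. For $\PSp_{2n}(q) = \Sp_{2n}(q)/Z$ with $|Z|_r = (2,r)$, Lemma~\ref{QZ} yields $p_r(\PSp_{2n}(q)) = (2,r)\,p_r(\Sp_{2n}(q))$, establishing part~(iii). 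The main obstacle I anticipate is the $r=2$ subcase of the ``otherwise'' case: tracking $(q^b+1)_2$ separately in the cases $q \equiv 1 \pmod 4$ and $q \equiv 3 \pmod 4$, and verifying that averaging over the all-negative $F$-classes really reproduces the sum $\sum_k c(n,k)/(n!\,4^k)$ cleanly, requires careful bookkeeping rather than a single uniform inequality.
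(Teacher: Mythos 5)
Your overall strategy (apply Lemma~\ref{thm:maintool} to the signed-cycle description of the tori, choose signs so that no factor $q^{b}\pm 1$ is divisible by $r$, and count the $2^{n-k}$ admissible lifts of each $k$-cycle permutation) is the same as the paper's, and your treatment of $\Sp_{2n}(q)$, $\SO_{2n+1}(q)$, and $\PSp_{2n}(q)$ is essentially correct. However, there is a genuine gap in your handling of $\Om_{2n+1}(q)$ for \emph{odd} $r$. Your index-two argument ("every $2$-regular element has odd order, hence lies in $\Om$") only works when $r=2$; for odd $r$ an $r$-regular element may well have even order and lie outside $\Om_{2n+1}(q)$, so the bound $p_r(\SO_{2n+1}(q))\ge\fr$ does not by itself transfer to the index-two subgroup -- a priori the $r$-regular elements could be concentrated in the nontrivial coset. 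The paper closes this by observing that $Q_{\Om}:=Q(r,\SO_{2n+1}(q))\cap\Om_{2n+1}(q)$ is again a quokka subset of $\SO_{2n+1}(q)$, that each selected torus $T_B$ satisfies $T_B\subseteq Q(r,\SO_{2n+1}(q))$, and that $|T_B\cap\Om_{2n+1}(q)|=|T_B|/2$ (citing Buturlakin--Grechkoseeva); then $|Q_\Om|/|\SO_{2n+1}(q)|\ge \tfrac12\fr$ and $p_r(\Om_{2n+1}(q))=2|Q_\Om|/|\SO_{2n+1}(q)|\ge\fr$. Some such even-splitting fact for tori is indispensable and is absent from your proposal.

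A secondary point: in the ``otherwise'' case with $r=2$ you propose taking all cycles negative and claim each factor contributes $2$-part exactly $2$. By Lemma~\ref{lem:2parts} this is only true when $q\equiv 1\pmod 4$; for $q\equiv 3\pmod 4$ an odd-length negative cycle contributes $(q+1)_2\ge 4$, and the correct choice is odd-length cycles positive, even-length cycles negative. You flag this as needing bookkeeping, and the fix is routine, so I would not call it a gap -- but note that the paper sidesteps the $r=2$ case entirely by quoting Proposition~6.2 of~\cite{GP}, so your direct argument here is a (repairable) departure from the paper's route.
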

\begin{remark} 
 \emph{Since $-I_{2n+1} \in Z(\GO_{2n+1}(q))$ has determinant $-1$, it follows that   $\PSO_{2n+1}(q)=\SO_{2n+1}(q)$ and  $\POm_{2n+1}(q) =\Om_{2n+1}(q)$.
} 
\end{remark} 
\begin{proof} 
 First let us assume that $\GF= \mathrm{Sp}_{2\l}(q)$. We may assume that $r$ is odd since the case $r=2$ is part of Proposition 6.2 in \cite{GP}.
Observe that $r | q^i-1$ if and only if $i$ is a multiple of $m$.  Similarly, $r | q^i+1$ if and only if $q^i \equiv -1 \imod{r}$ if and only if $m$ is even and $i$ is an odd multiple of $m/2$. It follows that if $m \ge 3$ is odd, and $\sigma(w) \in S_n$ has no cycle lengths $b_i$ divisible by $m$, then $r \nmid (q^{b_i}-1)$ and $r \nmid (q^{b_i}+1)$. In particular, $r \nmid |T_w|$ for all such $w \in W$. Applying Theorem \ref{thm:maintool} with $C$ the union of  classes in $W$ satisfying this condition implies
$p_r(\Sp_{2n}(q)) \ge s_{\neg m}(n)$
when $m \ge 3$ is odd. Similarly, if $m \ge 4$ is even and $\sigma(w) \in S_n$ has no cycle of length divisible by $m/2$, then $r \nmid |T_w|$ and 
$p_r(\Sp_{2n}(q)) \ge s_{\neg \frac{m}{2}}(n)$. 
\par

For each positive integer $i$, we have $\gcd(q^i-1,q^i+1)=(2,q-1)$. Since $r$ is odd, it follows that $r$ cannot divide both $q^i-1$ and $q^i+1$. Therefore  for all $\tau \in S_n$ it is possible to label each of the cycles of an arbitrary element $\tau \in S_n$ as positive or negative to give $w \in W$ satisfying $\sigma(w) = \tau$ and $r \nmid |T_w|$. We apply Theorem~\ref{thm:maintool} with $C$ the union of classes $B$ such that $r \nmid |T_B|$. If $\tau$ has $k$ cycles, then at least $2^{n-k}$ of the $2^n$ elements in $\{ w \in W \mid \sigma(w) =\tau \}$ satisfy the required condition. Therefore if $c(\l,k)$ denotes the number of permutations of $S_{\l}$ with exactly $k$ cycles, then we have 
\[
 p_r(\Sp_{2n}(q)) \geq \sum_{k=1}^{\l}\frac{2^{\l-k}c(n,k)}{|W|}
= \sum_{k=1}^{\l}\frac{c(n,k)}{2^k{\l}!} = \fr.
\]
Now we apply Lemma \ref{QZ} (noting that $(2,r)=1$) to  obtain
\[ p_r(\PSp_{2n}(q))  = |Z|_r p_r(\Sp_{2n}(q)) = (2,r)p_r(\Sp_{2n}(q)). \]
The same calculations prove the results for $\SO_{2\l+1}(q)$ and $\PSO_{2\l+1}(q)$. 
If $G = \Om_{2n+1}(q)$ and $r$ is odd then it is easy to see that 
\[Q_{\Om}:= Q(r, \SO_{2n+1}(q)) \cap \Om_{2n+1}(q)\]
 is also a quokka set. Moreover for each class $B$ chosen above, we have $T_B \subset Q(r, \SO_{2n+1}(q))$ and thus $|T_B \cap Q_{\Om}| = |T_B \cap \Om_{2n+1}(q)| =  |T_B|/2$ (see \cite[Theorem 4]{BuGr} for example). Theorem \ref{thm:maintool} therefore implies
\[ \frac{|Q_\Om|}{|\SO_{2n+1}(q)|}  \ge  \frac{(2n)!}{2^{2n+1}{\l}!^2}. \] 
It now follows easily that if $r$ is odd, then
\[p_r( \Om_{2n+1}(q) ) =  \frac{|Q_\Om|}{|\Om_{2n+1}(q)|} \ge \frac{(2n)!}{2^{2n}{\l}!^2}= \fr.\]

Finally, observe that all $2$-regular elements in $\SO_{2n+1}(q)$ are actually contained in $\Om_{2n+1}(q)$; thus $p_2(\Om_{2n+1}(q)) = 2p_2(\SO_{2n+1}(q))$.
\end{proof}

\begin{lemma}  \label{l:SpUB}
  Let $\epsilon>0$, $n\ge2$ an integer, and let $p$ and $r$ be distinct primes. Then there exist infinitely many powers $q$ of $p$ for which  $p_r(\Om_{2n+1}(q))$ is less than $(2,r)\fr + \epsilon$, and  $p_r(\Sp_{2n}(q))$ and $p_r(\SO_{2n+1}(q))$ are less than $\fr+ \epsilon$.
  In particular, part \emph{(ii)} of Theorem \ref{t:main} holds for these groups. 
  \end{lemma}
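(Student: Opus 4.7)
The plan is to mimic the strategy of Lemmas \ref{l:LUB} and \ref{l:UUB}: choose $q$ so that the lower-bound calculation in Lemma \ref{l:Sp} becomes essentially tight. The idea is to make $(q-1)_r$ enormous, so that every maximal torus with at least one ``positive'' cyclic factor has a huge $r$-part (and hence contributes negligibly to $p_r$), while the ``all-negative'' tori contribute exactly $\fr$.

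Specifically, given distinct primes $p$ and $r$, I would first pick a positive integer $e$ with $r \mid p^e-1$ (for $r$ odd, take $e$ to be the multiplicative order of $p$ modulo $r$; for $r=2$ with $p$ odd, take $e=2$, so $p^e \equiv 1 \imod{8}$). Set $q = p^{er^b}$ when $r$ is odd and $q = p^{e \cdot 2^b}$ when $r=2$. By Lemma \ref{lem:2parts}, $(q-1)_r \to \infty$ as $b \to \infty$, and in the case $r=2$ we additionally have $q \equiv 1 \imod{4}$. I will fix $b$ large enough at the end so that the error term below is less than $\epsilon$.

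Next I would analyze torus orders via \eqref{e:TcaseS}. For an $F$-class $B$ corresponding to $(\beta^+, \beta^-)$ with $k_1$ positive and $k_2 = k-k_1$ negative cycles, Lemma \ref{lem:2parts} gives $(q^{b_i^+}-1)_r \ge (q-1)_r$ for each positive factor, and $(q^{b_j^-}+1)_r = (2,r)$ for each negative factor, so $|T_B|_r \ge (q-1)_r^{k_1}(2,r)^{k_2}$ and $|T_B \cap Q(r,G)|/|T_B| = 1/|T_B|_r$. Since each cycle of $\sigma(w)$ is independently assigned the sign $+$ or $-$ with weight $1/2$ in the $F$-class enumeration, applying Lemma \ref{thm:maintool} and splitting according to whether $k_1 = 0$ or $k_1 \ge 1$ gives the upper bound
\[ p_r(\Sp_{2n}(q)) \; \le \; \sum_{k=1}^n \frac{c(n,k)}{n! \,(2(2,r))^k} \; + \; \frac{1}{(q-1)_r} \sum_{k=1}^n \frac{c(n,k)}{n!}\left(\frac{(2,r)+1}{2(2,r)}\right)^k. \]
The first sum equals $\fr$ by \eqref{e:fr}. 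The second sum is bounded by a constant $C_0$ independent of $n$: for $r$ odd the parenthesized factor is $1$ and the sum equals $1$, while for $r=2$ it equals $(3/4)^k$ and the sum is bounded by a constant via \eqref{e:sumcnkbound} with $x = 3/4$. Choosing $b$ so that $C_0/(q-1)_r < \epsilon$ yields $p_r(\Sp_{2n}(q)) < \fr + \epsilon$, and the identical calculation gives the bound for $\SO_{2n+1}(q)$.

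For $\Om_{2n+1}(q)$, the case $r=2$ follows from the identity $p_2(\Om) = 2p_2(\SO)$ of Lemma \ref{l:Sp}, replacing the initial $\epsilon$ by $\epsilon/2$. For $r$ odd, apply Lemma \ref{thm:maintool} inside $\SO$ to the quokka set $Q_\Om := Q(r, \SO) \cap \Om$: for good $B$ the ratio $|T_B \cap Q_\Om|/|T_B|$ equals $|T_B \cap \Om|/|T_B| = 1/2$ by \cite[Theorem 4]{BuGr}, and for bad $B$ it is at most $1/|T_B|_r \le 1/(q-1)_r$. Summing gives $|Q_\Om|/|\SO| \le \fr/2 + 1/(q-1)_r$, hence $p_r(\Om_{2n+1}(q)) = 2|Q_\Om|/|\SO| < \fr + 2/(q-1)_r$, which is less than $(2,r)\fr + \epsilon$ once $(q-1)_r > 2/\epsilon$. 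The main obstacle is essentially bookkeeping: the ``good'' $k_1 = 0$ contribution matches $\fr$ via \eqref{e:fr}, and the ``bad'' contribution is uniformly bounded in $n$ by a multiple of $1/(q-1)_r$, using \eqref{e:sumcnkbound} for the $r=2$ case.
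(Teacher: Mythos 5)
Your proposal is correct and follows essentially the same route as the paper: choose $q$ a power of $p$ with $(q-1)_r$ arbitrarily large so that every torus with a positive cyclic factor contributes negligibly, while the all-negative tori contribute exactly $\fr$, and handle $\Om_{2n+1}(q)$ via $p_2(\Om_{2n+1}(q))=2p_2(\SO_{2n+1}(q))$ for $r=2$ and via the quokka set $Q(r,\SO_{2n+1}(q))\cap\Om_{2n+1}(q)$ for $r$ odd. The only difference is cosmetic bookkeeping (your binomial split of the bad classes versus the paper's cruder bound of their total weight by $1$), which does not change the substance.
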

 \begin{proof} 
     Choose a positive integer $a$ such that $1/r^a < \epsilon/2$. Now choose  a positive integer $j$ such that $r^a | p^j-1$. Note that for any positive integer $b$, if we let $q = p^{jr^b}$, then $r^a | q-1$ by Lemma \ref{lem:2parts}. We show that $p_r(G) < \fr + \epsilon/2$ for $G=\Sp_{2n}(q)$ and $\SO_{2n+1}(q)$ so that the result for $p_2(\Om_{2n+1}(q))$ follows from the equation $p_2(\Om_{2n+1}(q)) = 2p_2(\SO_{2n+1}(q))$ obtained in Lemma \ref{l:Sp}. Now observe that for all classes in $W$ with at least one positive cycle, that is, classes whose corresponding tori have at least one cyclic factor of order $q^i-1$, we have  
 \[ \frac{|Q \cap T|}{|T|} \le \frac{1}{ (q^i-1)_r }\le \frac{1}{r^{a}}< \frac{\epsilon}{2}.\]
 Arguing as in the previous proof, for each of the $c(n,k)$ permutations $\sigma(w)$ of $S_n$ with exactly $k$ cycles, exactly $2^{n-k}$ of the corresponding $2^n$ elements $w$ of $W$ have $k$ negative cycles.  For each torus $T$ with $k$ negative cycles and no positive cycles,  we have $|Q \cap T|/|T | = 1/(2,r)^k$. Applying this inequality and  Theorem \ref{thm:maintool} for $G = \Sp_{2n}(q)$ or $\SO_{2n+1}(q)$  gives 
 \[p_r(G) =  \sum_{B \subset W}\frac{|B|}{|W|}.\frac{|T_B \cap Q(r,\GF)|}{|T_B|} <  \frac{\epsilon}{2} + \sum^n_{k=1} \frac{c(n,k)}{2^k (2,r)^k n!} = \frac{\epsilon}{2} + \fr. \] 
 The bound for $G= \Om_{2n+1}(q)$  follows by the same argument as in Lemma \ref{l:Sp} for $r$ odd and from the equation  $p_2(\Om_{2n+1}(q)) = 2p_2(\SO_{2n+1}(q))$ for $r=2$.
 \end{proof}
 \begin{corollary} \label{c:S}
 Let $G=\Om_{2n+1}(q)$, $\SO_{2n+1}(q)$ or $\Sp_{2n}(q)$, and let $r$ be a prime not dividing $q$. Then 
 \[ p_r(G) \ge \begin{cases}
  \frac{25}{29\sqrt{\pi n}} & \hbox{for $r$ odd;}\\
  \frac{1}{4(n+1)^{3/4}} & \hbox{for $r=2$ and $G \ne \Om_{2n+1}(q)$;}\\
  \frac{1}{2(n+1)^{3/4}} & \hbox{for $r=2$ and $G = \Om_{2n+1}(q)$.}
 \end{cases}
 \]
 Moreover, for any prime $p$, there exist infinitely many powers $q$ of $p$ for which 
  \[ p_r(G) \le \begin{cases}
  \frac{1}{\sqrt{\pi n}} & \hbox{for $r$ odd;}\\
  \frac{3}{5(n+1)^{3/4}} & \hbox{for $r=2$ and $G \ne \Om_{2n+1}(q)$;}\\
  \frac{6}{5(n+1)^{3/4}} & \hbox{for $r=2$ and $G = \Om_{2n+1}(q)$.}
 \end{cases}
  \]
  Furthermore there exist constants $C'_5$ and $C'_6$ such that
  \[ p_2(G) \le \begin{cases}
  \frac{C'_5}{n^{5/8}} & \hbox{if $q \equiv 1\pmod{4}$;}\\
  \frac{C'_6}{n^{1/2}} & \hbox{if $q \equiv -1\pmod{4}$;}
   \end{cases}
  \]
Therefore Theorems \ref{t:simplelb}, \ref{t:simplebestlb} and \ref{t:simpleub1} hold for these groups.
 \end{corollary}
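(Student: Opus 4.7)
The plan is to derive Corollary \ref{c:S} by combining Lemma \ref{l:Sp} and Lemma \ref{l:SpUB}, which sandwich $p_r(G)$ between explicit multiples of $\fr$, with the explicit estimates on $\fr$ already assembled in Section 2. So essentially every bound stated in the corollary should reduce to an inequality satisfied by $\fr$.

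For the first set of lower bounds, Lemma \ref{l:Sp} gives $p_r(G)\ge (2,r)\fr$ when $G=\Om_{2n+1}(q)$ and $p_r(G)\ge \fr$ otherwise, so it suffices to bound $\fr$ below. When $r$ is odd, $\fr=\frac{(2n)!}{2^{2n}n!^{2}}$, and Lemma \ref{l:npi} gives $\fr\ge \frac{25}{29\sqrt{\pi n}}$ at once. When $r=2$, $\fr=\frac{\Gamma(n+1/4)}{\Gamma(1/4)\Gamma(n+1)}$, and I would apply the Ke{\v{c}}ki{\'c}--Vasi{\'c} inequality \eqref{e:KV} with $x=n+1/4$, $y=n+1$. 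This yields
\[
\fr\;\ge\; \frac{(n+1/4)^{n-1/4}}{\Gamma(1/4)(n+1)^{n+1/2}}\,e^{3/4}
\;=\;\frac{e^{3/4}}{\Gamma(1/4)}\left(1-\tfrac{3/4}{n+1}\right)^{n-1/4}\!\!(n+1)^{-3/4},
\]
and since $(1-\frac{3/4}{n+1})^{n-1/4}\to e^{-3/4}$ monotonically from below, the prefactor tends to $1/\Gamma(1/4)\approx 0.2758$. Because $\Gamma(1/4)/4\approx 0.906$, the bound $\fr\ge \tfrac{1}{4(n+1)^{3/4}}$ holds from some explicit $n_0$ on, and I would verify the finitely many remaining $n$ by direct computation of $\fr$.

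For the upper bounds on an infinite family of $q$, I would apply Lemma \ref{l:SpUB} with $\epsilon$ chosen so small that the statement $p_r(G)<(2,r)\fr+\epsilon$ absorbs into the target constants $1/\sqrt{\pi n}$, $3/(5(n+1)^{3/4})$, $6/(5(n+1)^{3/4})$. Upper bounding $\fr$ for $r$ odd is again immediate from Lemma \ref{l:npi}. For $r=2$ I would use \eqref{e:KV} in the reverse direction (swapping the role of $x$ and $y$ so that $x=n+1/4\ge 1$, $y=n+1$ still applies but now the right-hand inequality bounds $\Gamma(n+1/4)/\Gamma(n+1)$ from above), giving an upper bound of the form $C(n+1)^{-3/4}$ whose constant I would pin down numerically so that $\fr+\epsilon\le \tfrac{3}{5}(n+1)^{-3/4}$ for every $n\ge 2$ when $\epsilon$ is sufficiently small, with the $\Om$-case obtained by doubling.

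The main obstacle, and the only substantive new work, lies in the final two bounds for $q\equiv \pm 1\pmod 4$: these hold for \emph{every} such $q$, not merely an infinite family, so Lemma \ref{l:SpUB} cannot be used and a direct application of Theorem \ref{thm:maintool} is needed. The plan is to split the sum in \eqref{eqn:maintool1} according to the Weyl-group partition $\beta=(\beta^{+},\beta^{-})$ of $n$: for each $F$-class, use the explicit torus order \eqref{e:TcaseS} together with Lemma \ref{lem:2parts} to bound $|T\cap Q|/|T|$, exploiting that $(q-1)_{2}\ge 4$ when $q\equiv 1\pmod 4$ and $(q+1)_{2}\ge 4$ when $q\equiv -1\pmod 4$. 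The exponents $5/8$ and $1/2$ suggest truncating the sum at an intermediate threshold on the number $k$ of cycles (or on $|\beta^{+}|$), bounding the head via the small $2$-parts and the tail via the Stirling-number estimate \eqref{e:sumcnkbound}, and optimizing the cutoff. The delicacy is choosing this cutoff so the two contributions balance to yield $n^{-5/8}$ respectively $n^{-1/2}$; this bookkeeping, together with verifying that the resulting constants $C'_{5}$, $C'_{6}$ are indeed finite, is where I expect the real work to be.
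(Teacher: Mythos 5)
Your treatment of the first two sets of inequalities is sound and essentially matches the paper: the paper reads them off from Lemmas \ref{l:Sp} and \ref{l:SpUB} together with numerical bounds on $\fr$ (the paper cites (4.14) of \cite{GP}, whereas you rederive the $r=2$ bound from \eqref{e:KV}; that is a legitimate self-contained substitute). One small slip: $\bigl(1-\tfrac{3/4}{n+1}\bigr)^{n-1/4}$ converges to $e^{-3/4}$ from \emph{above}, not from below, so your prefactor is in fact always at least $1/\Gamma(1/4)>1/4$ and no finite check is needed --- the error is harmless but the direction matters if one wanted to drop the fallback verification.

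The genuine gap is in the final two bounds, which you correctly identify as the substantive part but then propose to attack by the wrong mechanism. No truncation of the sum over $k$ and no ``balancing of head and tail'' is involved, and such a cutoff argument would not cleanly produce the exponent $5/8$. The actual argument is a direct computation: for $q\equiv 1\pmod 4$, Lemma \ref{lem:2parts} gives $(q^i-1)_2\ge 4$ and $(q^i+1)_2=2$ for every $i$, so a torus with $i$ positive and $k-i$ negative cycles satisfies $|Q\cap T|/|T|\le 4^{-i}2^{-(k-i)}=2^{-(k+i)}$; since exactly $c(n,k)2^{n-k}\binom{k}{i}$ elements of $W$ have this cycle signature, Lemma \ref{thm:maintool} yields
\[
p_2(G)\;\le\;\sum_{k=1}^{n}\frac{c(n,k)}{n!\,2^{k}}\sum_{i=0}^{k}\binom{k}{i}\frac{1}{2^{k+i}}\;=\;\sum_{k=1}^{n}\frac{c(n,k)}{n!}\Bigl(\frac{3}{8}\Bigr)^{k},
\]
and the exponent $5/8=1-3/8$ then falls out of \eqref{e:sumcnkbound} applied with $x=3/8$ to the \emph{entire} sum, with no optimization. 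For $q\equiv -1\pmod 4$ every cyclic factor has $2$-part at least $2$, the same computation gives $\sum_k c(n,k)/(n!\,2^k)=\tfrac{(2n)!}{2^{2n}n!^2}$, and Lemma \ref{l:npi} gives $C_6'n^{-1/2}$. Without recognizing that the weighted sum over sign patterns collapses to a single Stirling-number generating function evaluated at $3/8$ (resp.\ $1/2$), your plan does not reach the stated exponents.
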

 \begin{proof} 
  The first two sets of inequalities follow easily from (4.14) in \cite{GP} and our bounds in Lemma \ref{l:npi}. For the final set of inequalities first suppose $q \equiv 1\pmod{4}$ and set $Q=Q(2,G)$. Observe that for each positive cycle, we have a cyclic factor $(q^{i}-1)$ in the corresponding torus and $(q^{i}-1)_2 \ge 4$. Similarly, for each negative cycle, we have a cyclic factor $(q^{i}+1)$ in the corresponding torus and $(q^{i}+1)_2 = 2$. So for $w \in W$ with $i$ positive cycles and $k-i$ negative cycles, the proportion of $2$-regular elements in $T$ is 
  \[ \frac{|Q \cap T|}{|T|} \le \frac{1}{4^{i} 2^{k-i}} = \frac{1}{2^{k+i}}.\]
  We note that precisely $c(n,k) 2^{n-k}{k \choose i}$ of the elements in $W$ have $i$ positive cycles and $k-i$ negative cycles and therefore Theorem \ref{thm:maintool} gives 
  \[ p_2(\Sp_d(q)) \le \sum_{k=1}^{n}\frac{c(n,k)}{n! 2^{k}} \sum_{i=0}^{k} {k \choose i} \frac{1}{2^{k+i}}=\sum_{k=1}^{n}\frac{c(n,k)}{n! } \left ( \frac{3}{8}\right )^{k}.\]

  Using \eqref{e:sumcnkbound}, it follows that $p_2(G) \le C'_5 d^{-5/8}$ for some constant $C'_5$. A similar calculation for $q \equiv 3\pmod{4}$ with  
     \[ \frac{|Q \cap T|}{|T|} \le \frac{1}{ 2^{k}}\] 
     yields the upper bound $f_2(n)$ and, by Lemma \ref{l:npi}, this is at most $C'_6 d^{-1/2}$ for some constant $C'_6$.
 \end{proof}

%

%

\section{Even dimensional orthogonal cases}

The structure of the maximal tori in $\SO^{\pm}_{2\l}(q)$ is essentially the same as in the symplectic case, but the Weyl group has index 2 in the Weyl group $W_{B_n}:=C_2 \wr S_{\l}$ for $B_n$. Moreover, in the untwisted case, an $F$-conjugacy class in $W$ is a conjugacy class of permutations in $W_{B_n}$ with an even number of negative cycles (that is, $k_2$ in \eqref{e:TcaseS} is even). In the twisted case, an $F$-conjugacy class $C$ in $W$ corresponds to a conjugacy class $C^{\prime}$ of permutations in $W_{B_n}$ with an odd number of negative cycles (that is, $k_2$ is odd); if we define the $2$-cycle $w=(\l \, \l^{\prime}) \in W_{B_n}$, then $C=wC^{\prime}$.
Recall that  $\goddfn$ and $\gevenfn$ are defined in \eqref{e:goddgamma} and \eqref{e:gevengamma} in terms of the Gamma function and that we also have from \eqref{e:godd} and \eqref{e:geven} that 
\begin{align*} 
 \godd = 2\sum_{\substack{{k=1}\\{k \text{ odd}}}}^{n} \frac{c(n,k)}{n!2^k(2,r)^k} & \hspace{0.5cm}  \hbox{ and}&  \geven = 2\sum_{\substack{{k=1}\\{k \text{ even}}}}^{n} \frac{c(n,k)}{n!2^k(2,r)^k}.
\end{align*}
 The \emph{Witt defect} of $\SO_{2n}^{\pm}(q)$ or $\Om_{2n}^{\pm}(q)$ is $0$ if the group is of $+$ type and $1$ if the group is of $-$ type.

\begin{lemma} \label{l:SO}
 Let $G=\SO_{2n}^{\pm}(q)$ or $\Om_{2n}^{\pm}(q)$ where $ n\ge 2$. Let $m$ be the multiplicative order of $q$ modulo $r$ and let $l$ be the Witt defect of $G$. If $r$ is odd, then
\begin{align} 
 p_r(G)  \ge  \begin{cases}
 s_{\neg m}(n)  & \hbox{if $m$ is odd and $m \ge 3$;} \\ 
  s_{\neg \frac{m}{2}}(n)   & \hbox{if $m$ is even and $m \ge 4$;} \\
   g_{odd,r}(n) & \hbox{if $m=2$ and $n+l$ is odd, or if $l=m=1$;}  \\
     g_{even,r}(n) & \hbox{if $m=2$, $n+l$ is even, or if $m=1$ and $l=0$.}  
      \end{cases}  
\end{align} 
And 
\begin{align} 
 p_2(\SO^{\pm}_{2n}(q))  \ge  \begin{cases}
   \geven & \hbox{if $q^n \equiv \pm 1 \imod{4}$;}  \\
   \godd   & \hbox{if  $q^n \equiv \mp 1 \imod{4}$.}  
      \end{cases}  
\end{align} 
   Also $p_r(G/Z(G)) = |Z(G)|_rp_r(G)$ and $p_2(\Om^{\pm}_{2n}(q)) = 2 p_2(\SO^{\pm}_{2n}(q))$. Parts \emph{(i)} and \emph{(iii)} of Theorem \ref{t:main} hold for these groups.
\end{lemma}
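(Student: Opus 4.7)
The plan is to mirror the proof of Lemma~\ref{l:Sp} for the symplectic case, applying Lemma~\ref{thm:maintool} to the quokka set $Q=Q(r,G)$, but now respecting the restriction that only a proper subset of the conjugacy classes of $W_{B_n}=C_2\wr S_n$ parameterizes the relevant $F$-classes of $W=W(D_n)$. Writing $l$ for the Witt defect, an unsigned cycle structure $\sigma\in S_n$ together with a chosen sign pattern determines an $F$-class of $W$ precisely when the number of negative cycles is congruent to $l$ modulo $2$ (even for the untwisted $+$ type, odd for the twisted $-$ type). This parity constraint is exactly what splits $\frfn(n)$ into $\goddfn(n)$ and $\gevenfn(n)$: by \eqref{e:godd}--\eqref{e:geven} and \eqref{e:fr}, the quantities $\tfrac12\goddfn(n)$ and $\tfrac12\gevenfn(n)$ are the odd-$k$ and even-$k$ parts of $\frfn(n)$.

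For $r$ odd with $m\ge3$ odd (respectively $m\ge4$ even), I would take $C$ to be the set of $F$-classes whose underlying $\sigma\in S_n$ has no cycle of length divisible by $m$ (respectively $m/2$), exactly as in Lemma~\ref{l:Sp}. Every one of the $2^k$ sign patterns on such a $\sigma$ yields a torus of order coprime to $r$, and exactly half of them satisfy the parity requirement on negative cycles; consequently $2^{n-1}$ admissible elements of $W(D_n)$ lie above each such $\sigma$, and the lower bound $s_{\neg m}(n)$ (or $s_{\neg m/2}(n)$) transfers directly from the symplectic case.

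For $r$ odd with $m\in\{1,2\}$, Lemma~\ref{l:nt} together with the congruence $q\equiv\pm1\pmod r$ forces the sign pattern cycle by cycle: when $m=1$ every cycle must be negative, and when $m=2$ odd-length cycles must be positive while even-length cycles must be negative. The negative-cycle count of the forced pattern equals $k$ for $m=1$ and the number of even-length cycles for $m=2$, which is $\equiv k-n\pmod 2$ since the number of odd-length cycles is $\equiv n\pmod 2$. The requirement $\equiv l\pmod 2$ therefore restricts admissible $k$ to a single parity class ($k\equiv l\pmod2$ if $m=1$ and $k\equiv n+l\pmod 2$ if $m=2$), and summing the corresponding contributions and invoking \eqref{e:godd}--\eqref{e:geven} (with $(2,r)=1$) yields $\godd$ or $\geven$ in each row of the table.

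For $r=2$, I would adapt the analogous computation from the $r=2$ case of Proposition~6.2 of \cite{GP}: for each $\sigma\in S_n$ with $k$ cycles, pick the sign pattern that minimizes the $2$-part of the resulting torus---all cycles negative when $q\equiv1\pmod 4$, and odd-length cycles positive together with even-length cycles negative when $q\equiv3\pmod 4$. By Lemma~\ref{lem:2parts} every cyclic factor of the torus then has $2$-part exactly $2$, so $|T\cap Q|/|T|=1/2^k$ on the $2^{n-k}$ elements of $W_{B_n}$ realizing that pattern. The negative-cycle count is $k$ for $q\equiv1\pmod 4$ and $\equiv k-n\pmod 2$ for $q\equiv3\pmod 4$, and matching these against $l$ collapses the four subcases into the single condition on $q^n\bmod 4$ stated in the lemma, while selecting either the even-$k$ or odd-$k$ half of $\frfn(n)$. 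Finally, $p_r(G/Z(G))=|Z(G)|_r p_r(G)$ follows from Lemma~\ref{QZ} as in the linear and symplectic cases, and $p_2(\Om^\pm_{2n}(q))=2p_2(\SO^\pm_{2n}(q))$ follows because every element of odd order in $\SO^\pm_{2n}(q)$ is a square and hence lies in the index-$2$ subgroup $\Om^\pm_{2n}(q)$. The fussiest step, and the one I expect to be the main bookkeeping obstacle, is checking in the $r=2$ case that the parity of the optimal sign pattern really collapses to the stated $q^n\bmod 4$ condition uniformly across the four subcases $q\bmod 4\in\{1,3\}$ and $l\in\{0,1\}$.
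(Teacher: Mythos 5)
Your treatment of $\SO^{\pm}_{2n}(q)$ with $r$ odd is essentially the paper's proof: the same choice of classes for $m\ge 3$, and for $m\in\{1,2\}$ the same forced sign pattern together with the parity bookkeeping ($k\equiv l \imod 2$ for $m=1$, $k\equiv n+l \imod 2$ for $m=2$) that selects $\godd$ or $\geven$ via \eqref{e:godd}--\eqref{e:geven}. For $r=2$ the paper simply invokes Proposition 7.1 of \cite{GP}, whereas you carry out the minimal-sign-pattern computation directly; your computation is correct, each chosen torus has $2$-part exactly $2^k$ by Lemma \ref{lem:2parts}, and the parity analysis does collapse to the stated $q^n\equiv\pm1\imod 4$ dichotomy. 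The one piece you omit is $G=\Om^{\pm}_{2n}(q)$ with $r$ odd: since $(2,r)=1$ the claimed bound coincides with the $\SO^{\pm}_{2n}(q)$ bound, but it does not follow from it by any containment (odd-order is replaced by $r$-regular, and $r$-regular elements of $\SO^{\pm}_{2n}(q)$ need not lie in $\Om^{\pm}_{2n}(q)$); the paper handles this exactly as in Lemma \ref{l:Sp}, applying Lemma \ref{thm:maintool} to the quokka set $Q(r,\SO^{\pm}_{2n}(q))\cap\Om^{\pm}_{2n}(q)$ and using that $|T_B\cap\Om^{\pm}_{2n}(q)|=|T_B|/2$ for each chosen torus $T_B\subseteq Q$, which doubles back to the same bound upon dividing by $|\Om^{\pm}_{2n}(q)|=|\SO^{\pm}_{2n}(q)|/2$. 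Since you announce that you are mirroring Lemma \ref{l:Sp}, which contains precisely this step for $\Om_{2n+1}(q)$, this is a small omission rather than a flaw; everything else, including $p_r(G/Z(G))=|Z(G)|_rp_r(G)$ via Lemma \ref{QZ} and $p_2(\Om^{\pm}_{2n}(q))=2p_2(\SO^{\pm}_{2n}(q))$ via the squares argument, matches the paper.
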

\begin{remark} 
 \emph{In this case, $Z(\SO^{\pm}_{2n}(q)) = \langle -I_{2n} \rangle$ since $-I_{2n}$ has determinant $1$. The centre of $\Om^{\pm}_{2n}(q)$ is nontrivial if and only if $q$ is odd and  $-I_{2n}$ has spinor norm $1$. It turns out that  $Z(\Om^{+}_{2n}(q)) = \langle -I_{2n} \rangle$ if and only if $q^{n} \equiv 1 \imod{4}$, and is trivial otherwise. Similarly, $Z(\Om^{-}_{2n}(q)) = \langle -I_{2n} \rangle$ if and only if $q^{n} \equiv -1 \imod{4}$, and is trivial otherwise.}
\end{remark}
\begin{proof} 
First of all, we suppose that $G= \SO^{\pm}_{2\l}(q)$. Again we may assume $r$ is odd by Proposition 7.1 of \cite{GP}. Observe that $r | q^i-1$ if and only if $i$ is a multiple of $m$.  Similarly, $r | q^i+1$ if and only if $q^i \equiv -1 \imod{r}$ if and only if $m$ is even and $i$ is an odd multiple of $m/2$. It follows that if $m \ge 3$ is odd, and $\sigma(w) \in S_n$ has no cycle lengths $b_i$ divisible by $m$, then $r \nmid (q^{b_i}-1)$ and $r \nmid (q^{b_i}+1)$. In particular, $r \nmid |T_w|$ for all such $w \in W$. Applying Theorem \ref{thm:maintool} with $C$ the union of  classes in $W$ satisfying this condition implies
$p_r(\SO_{2n}^{\pm}(q)) \ge s_{\neg m}(n)$
when $m \ge 3$ is odd. Similarly, if $m \ge 4$ is even and $\sigma(w) \in S_n$ has no cycle of length divisible by $m/2$, then $r \nmid |T_w|$ and 
$p_r(\SO_{2n}^{\pm}(q)) \ge s_{\neg \frac{m}{2}}(n)$. 
\par
Now suppose that $m=1$ so that $r |q-1$. Then $r \nmid q^i+1$ for all positive integers $i$ and therefore if $C$ is the union of  classes $B$ in $W$ whose cycles are all negative, then $r \nmid |T_B|$ for all $B$. If the Witt defect $l=0$, then there must be an even number of negative cycles and thus $\sigma(w)$ must have an even number of cycles when $w \in C$. By the same argument as in the symplectic case, if $\tau \in S_n$ has $k$ cycles, then $2^{n-k}$ of the $2^{n}$ elements $w$ in $W_{B_n}$ satisfying $\sigma(w)=\tau$ have all cycles negative, and all of these elements are contained in $W$.
Applying Theorem \ref{thm:maintool} we have 
\[p_r(\SO^{+}_{2n}(q)) \ge \frac{|C|}{|W|} = \sum_{\substack{{k=1}\\{k \text{ even}}}}^{n} \frac{c(n,k)2^{n-k}}{n!2^{n-1}} = 2\sum_{\substack{{k=1}\\{k \text{ even}}}}^{n} \frac{c(n,k)}{n!2^{k}} = \geven \]
when $m=1$ and $l=0$.
If $m=l=1$,  then in order for $r$ not to divide $|T_w|$, the permutation $\sigma(w)$ must have an odd number of cycles (all of which are negative) and a similar calculation shows 
\[p_r(\SO^{-}_{2n}(q)) \ge 2 \sum_{\substack{{k=1}\\{k \text{ odd}}}}^{n} \frac{c(n,k)}{n!2^{k}} = \godd. \]
It remains to deal with the case $m=2$ (so $r|q+1$). Here $r | q^{i}+1$ if and only if $i$ is odd, and $r | q^{i}-1$ if and only if $i$ is even. Therefore we let $C$ be the union of classes in $W$ whose elements only have positive cycles of odd length and negative cycles of even length. If $n$ is even and $l=0$, then we claim that $w \in W$ can be contained in $C$ only if the number $k$ of cycles of $\sigma(w)$ is even. For if $n$ is even, then the number $\kodd$ of cycles  of $\sigma(w)$ of odd length must be even.
But the number of negative cycles of $w$ must also be even since $l=0$, and this is the same as the number $\keven$ of  cycles of even length. Thus $\kodd$ and $\keven$ must both be even and $k = \kodd + \keven$ must be even. This argument shows that if $n$ is even, then $\keven$ is even if and only if $k$ is even. By the same argument as before, if $\sigma(w)$ has $k$ cycles and $k$ is even, then precisely $2^{n-k}$ of the $2^{n}$ elements in $W_{B_n}$ have all of the cycles of the correct signs, and all of these elements are contained in $W$. Thus when $n$ is even, $m=1$,  and $l=0$, we have
\[p_r(\SO^{+}_{2n}(q)) \ge 2 \sum_{\substack{{k=1}\\{k \text{ even}}}}^{n} \frac{c(n,k)}{n!2^{k}}=\geven. \]
Similarly, if $n$ is odd, $m=1$ and $l=0$, then $w \in W$ can be contained in $C$ only if the number $k$ of cycles of $\sigma(w)$ is odd and the same argument gives
\[p_r(\SO^{+}_{2n}(q)) \ge 2 \sum_{\substack{{k=1}\\{k \text{ odd}}}}^{n} \frac{c(n,k)}{n!2^{k}}= \godd. \]
If $n$ is even, $m=1$ and $l=1$, then $k$ must be  odd. And if $n$ is odd, $m=1$ and $l=1$, then $k$ must be even to satisfy our condition. Theorem \ref{thm:maintool} implies the result in these last two cases.

The theorem for $G = \Om_{2n}^{\pm}(q)$ and $r$ odd is obtained in exactly the same way as in the odd dimensional orthogonal case. When $r=2$, we observe as before that all odd order elements in $ \SO_{2n}^{\pm}(q)$ are contained in  $\Om_{2n}^{\pm}(q)$ and therefore $p_2( \Om_{2n}^{\pm}(q)) = 2p_2( \SO_{2n}^{\pm}(q))$.

As usual, Lemma \ref{QZ} implies that $p_r(G/Z(G)) = |Z(G)|_r p_r(G)$ for all of these groups. 
\end{proof}

Recall from Lemma \ref{l:goddineq} that for all $ m\ge2$, we have $ s_{\neg m}(n) \ge \godd \ge \geven$ and notice that Lemma \ref{l:SO} proves that $p_r(\SO^+_{2n}(q)) \ge \geven$ for all $n \ge 2$, $p_r(\SO^-_{2n}(q)) \ge \godd$ if $n$ is even, and $p_r(\SO^-_{2n}(q)) \ge \geven$ if $ n\ge3$ is odd. Lemma \ref{l:SOUB} below shows that these are the best possible lower bounds that are independent of $q$. Here we denote the multiplicative order of $p$ modulo $r$ by  $o(p \imod{r})$.

\begin{lemma} \label{l:SOUB}
Let $G=X_{2n}(q)$ be an even dimensional orthogonal group defined over a field of characteristic $p$, and let $r$ be a prime distinct from $p$. Suppose that $X$, $n$, $r$, and $p$ satisfy the conditions in one of the rows of Table \ref{tab:caseO} and let $\epsilon>0$. Then there exist infinitely many powers $q$ of $p$ for which 
\begin{equation} \label{e:SOUB}
  p_r(G) < \hxr(n) +\epsilon.
\end{equation}
In particular, part \emph{(ii)} of Theorem \ref{t:main} holds for these groups.
\begin{table}[htdp]
\begin{center}\begin{tabular}{ccccc}
\hline  $X_d$ & {\rm Conditions} & $\hxr(n)$ & $o(p \imod{r})$ \vspace*{0.05cm} \\
\hline \\ [-2.5ex]
  $\SO_{2n}^+$& - & $\geven$ & {\rm any} \\
   $\Om_{2n}^+$& - & $(2,r)\geven$ & {\rm any} \\
 $\SO_{2n}^-$ & & $\godd$ & {\rm any} \\
  $\SO_{2n}^-$ & $n$ {\rm odd} & $\geven$ & {\rm even} \\
  $\Om_{2n}^-$&  & $(2,r)\godd$ & {\rm any} \\
  $\Om_{2n}^-$ & $n$ {\rm odd} & $(2,r)\geven$ & {\rm even} \vspace*{0.05cm}\\
  \hline 
  \end{tabular}
  \vspace*{0.15cm}
   \caption{Multiplicative orders of $p$ modulo $r$ for which there exist infinitely many powers $q$ of $p$ such that \eqref{e:SOUB} holds.} \label{tab:caseO}
\end{center}
\end{table}
  \end{lemma}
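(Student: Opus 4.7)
The approach mirrors Lemmas \ref{l:LUB}, \ref{l:UUB}, and \ref{l:SpUB}: given $\epsilon>0$ and a prime $p$, choose a positive integer $a$ with $1/r^a<\epsilon/2$ and construct an infinite family of prime powers of $p$ for which either $(q-1)_r \ge r^a$ or $(q+1)_r \ge r^a$. Comparing each row of Table \ref{tab:caseO} with the case distinctions in the proof of Lemma \ref{l:SO} shows which of $m=1$ or $m=2$ achieves the stated $\hxr(n)$ as a lower bound, and this determines the required congruence. For rows with any $o(p \imod r)$, the bound matches the $m=1$ branch of Lemma \ref{l:SO} (all-negative cycles, $l$ forcing the parity of the cycle count $k$), so we take $j$ with $r^a\mid p^j-1$ and set $q=p^{jr^b}$. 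For the two rows requiring $o(p \imod r)$ even ($\SO_{2n}^-$ and $\Om_{2n}^-$ with $n$ odd, bound $\geven$), the tight lower bound comes from the $m=2$ branch of Lemma \ref{l:SO}, so we pick $j$ with $r^a\mid p^j+1$ (which exists by Lemma \ref{lem:2parts} under the even-order hypothesis) and take $q=p^{jr^b}$, or $q=p^{j\cdot 3^b}$ if $r=2$.

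For any such $q$, partition the $F$-classes $B\subset W$ into good and bad. A class is good if $T_B$ has no cyclic factor divisible by $r$: explicitly, when $m=1$ these are the classes all of whose cycles are negative (with $k$ even or odd according to the Witt defect $l$), and when $m=2$ these are the classes whose positive cycles have odd length and whose negative cycles have even length (again with a parity condition on $k$ matching $n$ and $l$). In every case the good classes form exactly the set $C$ used in the proof of Lemma \ref{l:SO} to establish the lower bound, and for $B\in C$ we have $|T_B\cap Q|/|T_B|=1/(2,r)^{k_2(B)}$ where $k_2(B)$ is the number of negative cycles. A bad class has at least one cyclic factor of order divisible by $r^a$, so by Lemma \ref{lem:2parts} we get $|T_B\cap Q|/|T_B|\le 1/r^a<\epsilon/2$. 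Applying Theorem \ref{thm:maintool} gives
\begin{displaymath}
p_r(G) \;\le\; \frac{\epsilon}{2} \;+\; \sum_{B\in C}\frac{|B|}{|W|}\cdot\frac{1}{(2,r)^{k_2(B)}},
\end{displaymath}
and the second sum coincides with the quantity evaluated from below in the proof of Lemma \ref{l:SO}, namely $\hxr(n)$. Hence $p_r(G)<\hxr(n)+\epsilon$.

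The $\Om$ rows follow from the $\SO$ rows: for $r$ odd we have $p_r(\Om^{\pm}_{2n}(q))=p_r(\SO^{\pm}_{2n}(q))$ and $(2,r)=1$ is absorbed into $\hxr$; for $r=2$ we use $p_2(\Om^{\pm}_{2n}(q))=2p_2(\SO^{\pm}_{2n}(q))$ from Lemma \ref{l:SO}, and the factor $(2,r)=2$ is likewise absorbed. The only genuinely delicate step is the matching of the good-class set with $C$ from Lemma \ref{l:SO} in every case, i.e. verifying that the parities of $k$, $n$, and $l$ line up correctly for the chosen $m$; but since we are simply reusing the set $C$ built in that proof, no new combinatorial identities are required, and the main-term contribution equals $\hxr(n)$ with no slack.
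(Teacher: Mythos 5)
Your proposal is correct and follows essentially the same route as the paper: the same choice of $q$ with $r^a\mid q-1$ (or $r^a\mid q+1$ in the even-order rows), the same good/bad partition of $F$-classes with the bad ones contributing less than $\epsilon/2$ and the good ones (the set $C$ of Lemma \ref{l:SO}) summing to $\hxr(n)$ via \eqref{e:godd} and \eqref{e:geven}, and the same reduction of the $\Om$ rows to the $\SO$ rows. The only nitpick is that for $r$ odd the identity $p_r(\Om^{\pm}_{2n}(q))=p_r(\SO^{\pm}_{2n}(q))$ is not something the paper asserts; instead one reruns the torus computation inside $\Om^{\pm}_{2n}(q)$ as in Lemma \ref{l:Sp} (using that the $r$-part of $|T_B\cap\Om|$ equals that of $|T_B|$ since $r$ is odd), which yields the same bound.
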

\begin{proof} 
  Set $Q=Q(r,G)$ and choose a positive integer $a$ such that $1/r^a < \epsilon/2$. Now choose  a positive integer $j$ such that $r^a | p^j-1$. Note that for any positive integer $b$, if we let $q = p^{jr^b}$, then $r^a | q-1$ by Lemma \ref{lem:2parts}. As in Lemma \ref{l:SpUB}, we show that $p_r(\SO^{\pm}_{2n}(q))< \hxr(n) + \epsilon/2$ and hence obtain the bound for $p_2(\Om_{2n}^{\pm}(q))$ from the equation $p_2(\Om^{\pm}_{2n}(q)) = 2p_2(\SO_{2n}^{\pm}(q))$ in Lemma \ref{l:SO}. Now observe that for all classes in $W$ whose corresponding tori have at least one cyclic factor of order $q^i-1$, we have  
 \[ \frac{|Q \cap T|}{|T| }\le \frac{1}{(q^i-1)_r} \le \frac{1}{r^{a}} < \frac{\epsilon}{2}.\]
 For a torus $T$ with precisely $k$ negative cycles,  we have $|Q \cap T|/|T | = 1/(2,r)^k$. Therefore if $l$ denotes the Witt defect of $G$, then  Theorem \ref{thm:maintool}  implies
 \[p_r(\SO_{2n}^{\pm}(q)) < \begin{cases}
  \epsilon/2 +  \geven&  \hbox{for $l=0$;}\\
  \epsilon/2 + \godd &  \hbox{for $l=1$.}
 \end{cases}\] 
 In the light of Lemma \ref{QZ}, this proves the lemma under the conditions of lines 1 and 3 of Table \ref{tab:caseO}. To prove it for line 4, suppose that the Witt defect $l=1$ and $ n\ge 3$ is odd. Let $\epsilon >0$ and choose an integer $a$  such that $ r^{-a} < \epsilon/2$. If the multiplicative order of $p$ modulo $r$ is even, then there exists a positive integer $j'$ such that $r | p^{j'}+1$. In the light of Lemma \ref{lem:2parts}, there exists a positive integer $j$ such that $r^a | p^j+1$. Now for any positive integer $b$, if we let $q = p^{jr^b}$ for $r$ odd and $q=p^{j3^b}$ for $r=2$, then $r^a | q+1$ by Lemma \ref{lem:2parts}. Moreover, $(r,q^{i}-1) = (2,r)$ if and only if $i$ is odd, and  $(r,q^{i}+1) = (2,r)$ if and only if $i$ is even. Therefore for all $w \in W$ with all odd length cycles positive and all even length cycles negative, we have $|Q\cap T_w|/|T_w| = 1/(2,r)^k$, where $k$ is the number of cycles of $\sigma(w) \in S_n$. Since $n$ is odd,  the number $\kodd$ of cycles of odd length of $\sigma(w) \in S_n$ is odd. If $w$ satisfies our condition then $\kodd=k_1$, the number of positive cycles is odd, and thus $\keven = k_2$, the number of negative cycles, is odd if and only if $k$ is even. 
  For all other $ w\in W$, $T_w$ has at least one cyclic factor of order $q^{i}-1$ with $i$ even, or $q^{i}+1$ with $i$ odd; thus $|Q\cap T_w|/|T_w| \le r^{-a}< \epsilon/2$.
  Applying Theorem \ref{thm:maintool}  gives
  \[ p_r(\SO^{-}_{2n}(q)) < \epsilon/2 + \geven,\]
as required.
 As usual, the same argument as the one in Lemma \ref{l:Sp} proves \eqref{e:SOUB} under the conditions of lines 2, 5, or 6 of Table \ref{tab:caseO} when $r$ is odd. Finally, the  equation $p_2(\Om^{\pm}_{2n}(q)) = 2p_2(\SO_{2n}^{\pm}(q))$ proves \eqref{e:SOUB} under the conditions of lines 2, 5, or 6 when $r=2$.
  \end{proof}
  
  \begin{corollary} \label{c:caseO}
  Let $r$ and $p$ be distinct primes with $q$ a power of $p$.
 Let $G=\Om_{2n}^{\pm}(q)$ or $\SO^{\pm}_{2n}(q)$. Then 
 \[ p_r(G) \ge \begin{cases}
 \frac{25}{29\sqrt{\pi n}}  \cdot \left( \frac{2n-2}{2n-1}\right )& \hbox{for $r$ odd}\\
  \frac{1}{8(n+1)^{3/4}}  -   \frac{9}{25(n+1)^{5/4}}& \hbox{for $r=2$ and $G \ne \Om_{2n}^{\pm}(q)$;}\\
   \frac{1}{4(n+1)^{3/4}}  -   \frac{18}{25(n+1)^{5/4}}& \hbox{for $r=2$ and $G = \Om_{2n}^{\pm}(q)$.}\\
 \end{cases}
 \]
 Moreover, given any prime $p$, there exist infinitely many powers $q$ of $p$ for which 
  \[ p_r(G) \le \begin{cases}
  \frac{1}{\sqrt{\pi n}}  \cdot \left( \frac{2n}{2n-1}\right ) & \hbox{for $r$ odd}\\
  \frac{3}{10(n+1)^{3/4}}  +   \frac{9}{25(n+1)^{5/4}} & \hbox{for $r=2$ and $G \ne \Om_{2n}^{\pm}(q)$;}\\
  \frac{3}{5(n+1)^{3/4}}  +   \frac{18}{25(n+1)^{5/4}} & \hbox{for $r=2$ and $G = \Om_{2n}^{\pm}(q)$.}
 \end{cases}
  \]
  Furthermore, there exist constants $C''_5$, $C''_6$ such that
   \[ p_2(G) \le \begin{cases}
  \frac{C''_5}{n^{5/8}} & \hbox{if $q \equiv 1\pmod{4}$;}\\
  \frac{C''_6}{n^{1/2}} & \hbox{if $q \equiv -1\pmod{4}$;}
   \end{cases} 
  \]
 Therefore Theorems \ref{t:simplelb}, \ref{t:simplebestlb} and \ref{t:simpleub1} hold for these groups.
 \end{corollary}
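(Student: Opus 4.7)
\medskip

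The plan is to combine the lower bounds from Lemma \ref{l:SO}, which give $p_r(G) \ge g_{even,r}(n)$ in all cases (since $g_{odd,r}(n) \ge g_{even,r}(n)$ by Lemma \ref{l:goddineq}, and the $\Om$ case picks up a factor $(2,r)$), with the upper bounds from Lemma \ref{l:SOUB}, plus asymptotic estimates of $\fr$, $g_{odd,r}$ and $g_{even,r}$. The Corollary is naturally split into three blocks: (A) the two-sided bounds for odd $r$, (B) the two-sided bounds for $r=2$ of order $n^{-3/4}$ valid for all $q$, and (C) the sharper upper bounds for $r=2$ of order $n^{-5/8}$ or $n^{-1/2}$ depending on $q \bmod 4$.

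For block (A), the lower bound follows immediately from $p_r(G) \ge g_{even,r}(n) = \fr \cdot \frac{2n-2}{2n-1}$ (Lemma \ref{l:SO}, noting $(2,r)=1$) together with $\fr \ge \frac{25}{29\sqrt{\pi n}}$ (Lemma \ref{l:npi}). For the upper bound, I would apply Lemma \ref{l:SOUB} with $\hxr(n)=g_{odd,r}(n)=\fr\cdot\frac{2n}{2n-1}$ and use $\fr \le \frac{1}{\sqrt{\pi n}}$, choosing $\epsilon$ small enough (absorbed into the slack in the Stirling bound on $\fr$, which is strict for each $n \ge 2$) so that for infinitely many $q$ we get $p_r(G) \le \frac{1}{\sqrt{\pi n}}\cdot\frac{2n}{2n-1}$ with no $\epsilon$ remaining.

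For block (B), the functions $g_{even,2}(n)$ and $g_{odd,2}(n)$ are Gamma-function expressions of the form $\fr \mp \frac{\Gamma(n-1/4)}{4\Gamma(3/4)\Gamma(n+1)}$. The plan is to apply the Ke{\v{c}}ki{\'c}–Vasi{\'c} inequality \eqref{e:KV} twice: once with $x=n+1/4$, $y=n+1$ to sandwich $\frac{\Gamma(n+1/4)}{\Gamma(n+1)}$ between explicit expressions of order $(n+1)^{-3/4}$, and once with $x=n-1/4$, $y=n+1$ to bound $\frac{\Gamma(n-1/4)}{\Gamma(n+1)}$ of order $(n+1)^{-5/4}$. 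Dividing by the numerical constants $\Gamma(1/4)$ and $4\Gamma(3/4)$ and simplifying should produce the specific fractions $\tfrac{1}{8}$, $\tfrac{3}{10}$, $\tfrac{9}{25}$; the corresponding $\Om$ bounds ($\tfrac{1}{4}$, $\tfrac{3}{5}$, $\tfrac{18}{25}$) then follow by doubling, using $p_2(\Om^\pm_{2n}(q)) = 2\,p_2(\SO^\pm_{2n}(q))$ (Lemma \ref{l:SO}). For the upper bound in block (B), the same estimate on $g_{odd,2}$ is combined with Lemma \ref{l:SOUB}, with $\epsilon$ chosen small enough to absorb into the slack. The main tedium here is verifying that the numerical constants work uniformly for all $n \ge 2$, which may require checking a few small cases by hand.

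For block (C), I would imitate the calculation in Corollary \ref{c:S}. Given $q \equiv 1 \imod 4$, for a torus with $i$ positive and $k-i$ negative cycles, $(q^j-1)_2 \ge 4$ and $(q^j+1)_2 = 2$, so $|Q\cap T|/|T| \le 2^{-(k+i)}$; the even-orthogonal Weyl group $W$ has index $2$ in $W_{B_n}$, which contributes only a bounded constant. Summing via Theorem \ref{thm:maintool} and using that for each $\sigma\in S_n$ with $k$ cycles there are $\binom{k}{i}2^{n-k}$ sign-assignments with $i$ positive cycles, I obtain
\[
p_2(G) \le C\sum_{k=1}^{n}\frac{c(n,k)}{n!}\left(\tfrac{3}{8}\right)^k,
\]
and \eqref{e:sumcnkbound} with $x=3/8$ gives the $n^{-5/8}$ bound. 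For $q \equiv 3 \imod 4$, all cyclic factors $q^j\pm 1$ contribute at most a $2$-part of $2$ (or $4$ for a single factor), so $|Q\cap T|/|T| \le 2^{-k}$ and thus $p_2(G) \le C'\,f_2(n)$, and Lemma \ref{l:npi} delivers the $n^{-1/2}$ bound. The main obstacle throughout is bookkeeping the explicit constants so that the stated fractions in the Corollary come out exactly; the asymptotics themselves are routine consequences of Lemma \ref{l:npi} and \eqref{e:sumcnkbound}.
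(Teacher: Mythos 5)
Your proposal follows essentially the same route as the paper: the paper obtains the first two sets of inequalities from the Gamma-function expressions \eqref{e:goddgamma}--\eqref{e:gevengamma} together with Lemma \ref{l:npi} and the ready-made estimates (4.16)--(4.17) of \cite{GP} (which you instead propose to rederive in-house from the Ke{\v{c}}ki{\'c}--Vasi{\'c} inequality \eqref{e:KV}), and it obtains the third set by arguing exactly as in Corollary \ref{c:S}, precisely as you do. The only caveat is the one you already flag yourself: the explicit numerical coefficients must be verified at small $n$, where they are genuinely tight.
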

 \begin{proof} 
  The first two sets of inequalities follow easily from (4.16) and (4.17) in \cite{GP}, Lemma \ref{l:npi}, \eqref{e:goddgamma} and \eqref{e:gevengamma}. For the third set of inequalities, we argue in the same way as in Corollary \ref{c:S}.
 \end{proof}
 
 \section{Answers to some open questions} \label{openqs}
 
 In \cite[Section 8]{BPS}, the first author, P\'alfy and Saxl introduce the notation $\rho(r,X,d,q)$ to denote the proportion of $r$-regular elements in a classical simple group $X_d(q)$. One of their main results is that $\rho(r,X,d,q)  > 1/2d$ for all $r$, $X$ and $q$. Moreover if $X_d(q) = \PSL_d(q)$, they show that for infinitely many values of $q$ the bound $\rho(r,X,d,q)  \le 3/\sqrt{d}$ holds. They asked whether it is possible to close the quadratic gap. More specifically, they let
 \[ \alpha(r,X,q) = \limsup_{d \to \infty} \frac{ - \log \rho(r,X,d,q)  }{\log d} \] 
 and $\alpha = \sup_{r,X,q} \alpha(r,X,q)$. They observe that $1/2 \le \alpha \le 1$, but ask for the 
specific value of $\alpha$. This follows from Theorem \ref{t:limsup}. We combine the proof of 
Theorem~\ref{t:limsup} with our determination of $\alpha$ in Lemma~\ref{l:alpha}.
 \begin{lemma} \label{l:alpha}
 Let $\epsilon > 0$. Then there exists  $q$ and a constant $C$ such that $p_2(\PSL_d(q)) \le Cd^{\epsilon-1}$ for all $d \ge 2$. In particular, Theorem \ref{t:limsup} holds and 
  the supremum $\alpha = \sup_{r,X,q} \alpha(r,X,q)$ is equal to $1$. 
 \end{lemma}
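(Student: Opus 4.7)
My plan is, given $\epsilon > 0$, to exhibit a single odd prime power $q$ (depending only on $\epsilon$) for which the $2$-part $s := (q-1)_2$ satisfies $s \ge 1/\epsilon$. Such $q$ are plentiful: fix any odd prime $p$ and, by Lemma~\ref{lem:2parts}, choose an even integer $j$ large enough that $(p^j-1)_2 = j_2 (p\pm 1)_2$ exceeds $1/\epsilon$, and set $q := p^j$. With this $q$ fixed, all remaining work will be uniform in $d$.

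Next I would apply the quokka identity of Lemma~\ref{thm:maintool} to $Q(2, \SL_d(q))$ with Weyl group $W = S_d$. For an $F$-class of cycle type $(b_1,\dots,b_k)$, the associated torus $T$ sits inside $\prod_{i=1}^{k}(q^{b_i}-1)$ with index $q-1$, and Lemma~\ref{lem:2parts} gives $(q^{b_i}-1)_2 \ge (q-1)_2 = s$ for every $i$. Hence $|T|_2 \ge s^{k-1}$ and the proportion of $2$-regular elements in $T$ is at most $s^{-(k-1)}$. Summing over the classes of $S_d$ and invoking the generating-function identity \eqref{e:genstirlingsum} with $x = 1/s$, I obtain
\[
  p_2(\SL_d(q)) \;\le\; \sum_{k=1}^{d} \frac{c(d,k)}{d!\, s^{k-1}} \;=\; s \cdot \frac{\Gamma(d+1/s)}{\Gamma(1/s)\,\Gamma(d+1)}.
\]

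To convert this into a pure $d$-power bound, I plan to apply the Ke{\v{c}}ki{\'c}--Vasi{\'c} estimate \eqref{e:sumcnkbound} with $x = 1/s \in (0,1)$, yielding $p_2(\SL_d(q)) \le s\, C_{1/s}\, d^{1/s-1}$ for an explicit constant $C_{1/s}$ depending only on $s$. Lemma~\ref{QZ} then supplies
\[
  p_2(\PSL_d(q)) = (d,q-1)_2\, p_2(\SL_d(q)) \;\le\; s \cdot p_2(\SL_d(q)) \;\le\; s^2 C_{1/s}\, d^{1/s-1}.
\]
Since $1/s \le \epsilon$ forces $d^{1/s-1} \le d^{\epsilon-1}$ for every $d \ge 1$, the constant $C := s^2 C_{1/s}$ proves Theorem~\ref{t:limsup}. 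The value of $\alpha$ then falls out at once: the uniform bound $\rho(r,X,d,q) \ge 1/(2d)$ from \cite{BPS} gives $\alpha \le 1$, while for each $\epsilon > 0$ the bound above gives $\alpha(2, \PSL, q) \ge 1 - \epsilon$ for the chosen $q$, so $\alpha \ge 1-\epsilon$ for every $\epsilon>0$, whence $\alpha = 1$.

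The only mildly delicate point is the passage from $\SL_d(q)$ to $\PSL_d(q)$: Lemma~\ref{QZ} costs a factor of $(d,q-1)_2$, which could in principle be as large as $d$. However, this factor is bounded by $s$, which depends only on $\epsilon$, so it is harmlessly absorbed into the $d$-independent constant $C$. All asymptotics are already packaged inside \eqref{e:sumcnkbound}, so no further estimation work is required; the whole argument is a clean three-step invocation of Lemmas~\ref{thm:maintool}, \ref{QZ}, and~\ref{lem:2parts}.
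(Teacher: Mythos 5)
Your proposal is correct and follows essentially the same route as the paper's own proof: fix $q$ with $(q-1)_2$ large (the paper takes $2^a\mid q-1$ with $2^{-a}<\epsilon$), bound $|Q\cap T|/|T|$ by $(q-1)_2^{-(k-1)}$ for tori with $k$ cycles, sum via Lemma~\ref{thm:maintool} and \eqref{e:sumcnkbound}, and pass to $\PSL_d(q)$ with Lemma~\ref{QZ}, absorbing the factor $(d,q-1)_2\le (q-1)_2$ into the constant. The concluding derivation of $\alpha=1$ from the $1/(2d)$ lower bound of \cite{BPS} together with $\alpha(2,\PSL,q)\ge 1-\epsilon$ is also exactly the paper's argument.
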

 \begin{proof}
 First, choose a positive integer $a$ such that $2^{-a} < \epsilon$ and choose $q$ such that $2^{a} | q-1$. 
Then for every $w \in W$ with $k \ge 2$ cycles, there are, in the corresponding tori of $\SL_d(q)$, at least 
$k-1$ cyclic factors of order $q^{i}-1$ for various $i$. In particular the proportion of $2$-regular elements 
in such a torus $T$ is 
 \[ \frac{|Q \cap T|}{|T|} \le \frac{1}{2^{a(k-1)}}.\]
 Applying Theorem \ref{thm:maintool} implies
 \[ p_2(\SL_d(q)) \le \frac{1}{d} + \sum_{k=2}^{d} \frac{c(d,k)}{d! 2^{a(k-1)}} =  2^{a}\sum_{k=1}^{d} \frac{c(d,k)}{d! 2^{ak}} \]
 and 
 \[
\sum_{k=1}^{d} \frac{c(d,k)}{d! 2^{ak}} \le  \frac{C'}{d^{1-2^{-a}}}
\]
  by \eqref{e:sumcnkbound}, where $C'$ depends only on $\epsilon$. Now 
$p_2(\PSL_d(q))  = (d,q-1)_2p_2(\SL_d(q))$ and so we have shown that there 
exists $q$ and a constant $C$ (depending only on $\epsilon$ and $q$) such that
 \[p_2(\PSL_d(q))  \le  C/ d^{1-2^{-a}} \le C/ d^{1-\epsilon}\]
for all $d$. This proves Theorem~\ref{t:limsup}. 
  It follows that for any $\epsilon >0$, there exists $q$ such that $\alpha (2,\PSL,q)   \ge 1- \epsilon$ and therefore $\alpha = \sup_{r,X,q} \alpha(r,X,q) =1$.
\end{proof}
     
   The quantity $\alpha(r,X) = \inf_q \alpha(r,X,q)$ is also defined in \cite{BPS} and it is observed that $\alpha(r,\PSL) \ge 1/r$. They ask whether $\inf_r  \alpha(r,\PSL) =0$. We can prove that this is the case; indeed, we claim that for all $r$ we have $1/(r-1) \ge \alpha(r,\PSL) \ge 1/r$. If $r=2$ then there is nothing to prove so suppose $ r \ge 3$. We fix a prime $q$ with multiplicative order $r-1$ modulo $r$ (the existence of $q$ is guaranteed by the Dirichlet prime number theorem, see \cite[Theorem 7.9]{Ap} for example). By Remark \ref{rem:main}(c), we have $p_r(\SL_d(q)) \ge s_{\neg r-1}(d)$ and so \eqref{beals} implies that there exists a consant $C$ such that $p_r(\PSL_d(q)) \ge C d^{- \frac{1}{r-1}}$ for all $ d \ge 2$. It follows that  
   $\alpha(r,\PSL,q) \le \frac{1}{r-1}$ and hence $\alpha(r,\PSL) = \inf_q \alpha(r,\PSL,q) \le \frac{1}{r-1}$ as required. We also note that $\inf_r \alpha(r,X) =0$ for the other classes of simple group $X$ by Lemmas \ref{l:psu1}, \ref{l:Sp} and \ref{l:SO}.
   
   The authors of \cite{BPS} also state that they expect that $\alpha(2,X) \ge 1/2$ for the other types of classical simple group $X$. This is indeed the case since Theorem \ref{t:simpleub1} shows that for every $q$ and each $X$,  there exists a constant $C_q$ (which may depend on $q$) such that $\rho(2,X,d,q) \le C_q d^{-1/2}$; thus we have $\alpha(2,X,q) \ge \frac{1}{2}$ for all $q$ as was expected.

{\small

}

\end{document}